\tikzstyle arrowstyle=[scale=1]
\tikzstyle directed=[postaction={decorate,decoration={markings,
    mark=at position .65 with {\arrow[arrowstyle]{stealth}}}}]
\tikzstyle reverse directed=[postaction={decorate,decoration={markings,
    mark=at position .65 with {\arrowreversed[arrowstyle]{stealth};}}}]
 \newtheorem{question}{Question}
 \newtheorem{thm}{Theorem}[subsection]
\newtheorem{defn}[thm]{Definition}
\newtheorem{lem}[thm]{Lemma}
\newtheorem{cor}[thm]{Corollary}
\newtheorem{RemI}{Remark}[section]
\newtheorem*{remark*}{Remark}
\newtheorem{definition}[thm]{Definition}
\newtheorem{proposition}[thm]{Proposition}
\newtheorem*{lemmaA}{Lemma A.1}
\DeclareMathOperator{\Int}{int}
\newcommand{\supp}{\mathrm{supp}}
\newcommand{\per}{\mathrm{Per}}
\newcommand{\fix}{\mathrm{Fix}}
 \newcommand{\eps}{\varepsilon}
 \newcommand{\N}{\mathbb{N}}
\numberwithin{equation}{section}
\begin{document}

\title[Lebesgue measure-preserving maps]{Continuous Lebesgue measure-preserving maps on one-dimensional manifolds: a survey}

\author{Jozef Bobok}
\author{Jernej \v Cin\v c}
\author{Piotr Oprocha}
\author{Serge Troubetzkoy}

\address[J.\ Bobok]{Department of Mathematics of FCE, Czech Technical University in Prague, 
Th\'akurova 7, 166 29 Prague 6, Czech Republic}
\email{jozef.bobok@cvut.cz}

\date{\today}

\subjclass[2020]{37E05, 37A05, 37B65, 37C20}
\keywords{Interval map, circle map, Lebesgue measure-preserving, periodic points, generic properties}

\address[J.\ \v{C}in\v{c}]{
Centre of Excellence IT4Innovations - Institute for Research and Applications of Fuzzy Modeling, University of Ostrava, 30. dubna 22, 701 03 Ostrava 1, Czech Republic}
\email{jernej.cinc@osu.cz}

\address[P.\ Oprocha]{
Centre of Excellence IT4Innovations - Institute for Research and Applications of Fuzzy Modeling, University of Ostrava, 30. dubna 22, 701 03 Ostrava 1, Czech Republic}
\email{piotr.oprocha@osu.cz}

\address[S.\ Troubetzkoy]{Aix Marseille Univ, CNRS, Centrale Marseille, I2M, Marseille, France
postal address: I2M, Luminy, Case 907, F-13288 Marseille Cedex 9, France}
\email{serge.troubetzkoy@univ-amu.fr}

\begin{abstract}
We survey the current state-of-the-art about the dynamical behavior of continuous Lebesgue measure-preserving maps on one-dimensional manifolds.
\end{abstract}
\maketitle

\section{Introduction}

Let $M$ denote a compact connected one-dimensional manifold, namely the {\em unit interval}, 
$I :=[0,1]$, and the {\em unit circle}, $\mathbb{S}^1$. Define $C(M)$ to be the set of continuous maps of $M$. Let $\lambda$ denote the 
{\em Lebesgue measure} on an underlying manifold. Our survey will focus on discussion of 
topological and dynamical properties in the space $C_{\lambda}(M)\subset C(M)$ of Lebesgue measure-preserving  
continuous maps of $M$ with the metric of uniform convergence; the space $C_{\lambda}(M)$ is a complete metric space, see \cite[Proposition 4]{BCOT1}. Our particular interest is in Lebesgue 
measure-preserving  continuous maps on $M$ which are not necessarily invertible, so the case that is not usually studied in Ergodic 
Theory.

The class $C_{\lambda}(M)$ contains very large spectrum of maps; on one hand nowhere differentiable ones or even without finite or infinite one-sided derivative \cite{B} and on the other hand  many piecewise monotone maps, many  piecewise smooth maps and of course maps $\textrm{id}$ and $1-\textrm{id}$. Furthermore,  $C_{\lambda}(\mathbb{S}^1)$ also contains all circle rotations.

For a general compact manifold $\mathbb{M}$, let $H_{\lambda}(\mathbb{M})$  denote the space of Lebesgue measure-preserving 
homeomorphisms of $\mathbb{M}$, which is again a complete metric space when equipped with the uniform 
metric. In the setting of volume preserving homeomorphisms in dimension 1, the dynamical behavior is simple and thus not of much interest. However, there are some similarities of
the dynamics of higher dimensional homeomorphisms with the one dimensional continuous maps, so we
will mention them throughout the article.
There is a survey book by Alpern and Prasad on the dynamics of generic volume preserving homeomorphism \cite{AP}, thus we only briefly mention some such results for comparison with $C_{\lambda}(M)$.

Our choice of $C_{\lambda}(M)$ (and $H_{\lambda}(\mathbb{M})$) is motivated by the fact that they are one-dimensional versions of volume-preserving maps, or more broadly, conservative dynamical systems; ergodic maps preserving Lebesgue measure are the most fundamental examples of maps having a unique physical measure. 
 Since generic maps in $C_{\lambda}(M)$ are weakly mixing \cite{BT}, the Ergodic Theorem implies that for a generic map in $C_{\lambda}(M)$ the closure of a  typical trajectory has full Lebesgue measure, thus  the statistical properties of typical trajectories can be revealed by physical observations.
 
Or as Karl Petersen says in the introduction of \cite{Pet}:
 \begin{quote}
     Measure-preserving systems model processes in equilibrium by transformations on probability spaces or, more generally, measure spaces. They are the basic objects of study in ergodic theory, a central part of dynamical systems theory. These systems arise from science and technology as well as from mathematics itself, so applications are found in a wide range of areas, such as statistical physics, information theory, celestial mechanics, number theory, population dynamics, economics, and biology.
 \end{quote}

On the other hand, they represent a variety of possible one-dimensional dynamics as highlighted in the following remark which is proven in the interval case in the introduction of \cite{BCOT1}; in the circle case the proof is analogous using Lemma A.1 from the Appendix.

\begin{RemI}\label{rem:CP}Let $f\in C_{\lambda}(M)$, then  (i) and (ii) are equivalent, and (iii) implies (i).
\begin{itemize}
 \item[(i)] $f$ preserves a non-atomic probability measure $\mu$ with $\supp~\mu=M$. 
    \item[(ii)] There exists a homeomorphism $h$ of $M$ such that $h\circ f\circ h^{-1}\in C_{\lambda}(M)$.
 \item[(iii)] $f$ has a dense set of periodic points, i.e., $\overline{\per(f)}=M$.
 \end{itemize}
Furthermore, if $\per(f)\neq\emptyset$, we have (i) implies (iii). Otherwise $f$ is conjugate to an irrational rotation.
\end{RemI}

The main line of research in this area has been describing the size of the set of maps satisfying a certain topological or dynamical  property. These results are discussed in Section~\ref{sec:denseness} and are summarized in the following table.

 \vspace{0.6cm}
 \hspace{-0.6cm}
\begin{centering}
\begin{tabular}
{|l|c|l|}
\hline
			 Property & $C_\lambda{(M)}$ & Result   \\[2mm]
			\hline
   
   Weak mixing & generic & Theorem \ref{thm:weakmixingI} \& \ref{thm:3}\\
    Strong mixing & dense, but of first category & Theorem \ref{thm:nonmix} \\
    Leo & open dense & Theorem \ref{thm:leoopendense} \&  \ref{thm:leoS1}\\
    Shadowing & generic & Corollary \ref{t-pshadow} \& Theorem \ref{thm:main}\\
    Periodic shadowing & generic & Theorem \ref{t-pshadow} \&  Corollary \ref{cor:shadow}\\
    Specification prop. & open dense &Corollary \ref{cor:opendenseI} \& \ref{cor:specificationcircle}\\
    Hausdorff dim. $\Gamma_f$ & 1 &Theorem \ref{thm:SchWin}  \\
    Lower box dim. $\Gamma_f$ & 1 &Theorem \ref{thm:SchWin}  \\
    Upper box dim. $\Gamma_f$ & 2 & Theorem \ref{thm:SchWin} \\
    $\lambda$-a.e. knot point & generic & Theorem \ref{t:1}\\
$\delta$-crookedness & generic & Theorem \ref{thm:UniLimPresLeb} \\
\hline
		\end{tabular}
  \end{centering}
    \vspace{0.5cm}

 While the previous table summarizes the properties which are the same in $C_{\lambda}(I)$ and $C_{\lambda}(\mathbb{S}^1)$ there are some properties that are different or we do not known if they are the same.
In particular, Theorem \ref{thm:main} shows that s-limit shadowing is generic in $C_{\lambda}(\mathbb{S}^1)$, however, we can only prove the s-limit shadowing property and limit shadowing property are dense in $C_{\lambda}(I)$ (see Proposition \ref{p:1}).
We also give a theorem describing the structure and dimensions of the set of periodic points of generic maps from $C_{\lambda}(I)$ and $C_{\lambda}(\mathbb{S}^1)$. Results are identical in these settings (see Theorems~\ref{t8} and \ref{t8'}) except for the circle maps of degree one (see Theorem~\ref{thm:C_p}).

In Section~\ref{sec:smoothness} we state Theorems \ref{thm:transitiveergodic} and \ref{thm:mixingexact} that show that certain topological and metric properties are equivalent for sufficiently smooth maps. In Subsection~\ref{subsec:nowherediff} we state Theorem~\ref{thm:BobokSoukenka} which shows that there exists a nowhere monotone map in $C_{\lambda}(I)$ that has finite topological entropy. This motivates a more general interesting question regarding the connection between nowhere differentiability and infinite topological entropy.

We finish the article with Section~\ref{sec:other} where we give an overview of known related results for spaces of maps on $M$ equipped with smoother topologies.

Not to disturb the flow of reading, we give in the appendix  a proof of Lemma~\ref{lem:rec} which is needed to argue for Remark~\ref{rem:CP}.

\section{Denseness properties}\label{sec:denseness}

First one needs to note that both spaces $C_{\lambda}(I)$ and $C_{\lambda}(\mathbb{S}^1)$ equipped with the metric of uniform convergence are complete. This follows from Proposition 4 of \cite{BCOT1} which is proven for $M=I$, however the analogous proof works also for $\mathbb{S}^1$. 
 We call a dense $G_{\delta}$ set {\em residual} and call a property {\em generic} if it is attained on at least a residual set of the Baire space on which we work. In this section we will study three different notions of denseness. The strongest property that we verify is that a certain dynamical property holds for an open dense set of maps in $C_{\lambda}(M)$. We verify the weaker notion of genericity for many other topological dynamical properties, while for certain properties we verify only the denseness of maps with certain properties in $C_{\lambda}(M)$.

The study of generic properties in dynamical systems was initiated in  the article by Oxtoby and Ulam from 1941 \cite{OxUl} in which they showed that for a finite-dimensional compact manifold with a non-atomic measure that is positive on open sets, the set of ergodic measure-preserving homeomorphisms is a generic set in the strong topology.
Later Halmos in 1944  \cite{Ha44},\cite{Ha44.1} introduced approximation techniques to a purely metric setting. Namely, he studied interval maps which are invertible almost everywhere and preserve the Lebesgue measure. He showed that the generic invertible map is weakly mixing (i.e., has continuous spectrum).
Subsequently, Rohlin in 1948 \cite{Ro48} showed that Halmos' result is optimal in a sense that the set of strongly mixing measure-preserving invertible maps is of the first category in the same underlying space.
It took until 1967 that this line of research was continued when Katok and Stepin \cite{KaSt} introduced the notion of a speed of approximation. One of the most notable applications of their methods is the proof of genericity of ergodicity and weak mixing for certain classes of interval exchange transformations (IETs). More details on the follow-up history of approximation theory can be found in the surveys
 \cite{ChPr}, \cite{BeKwMe} and \cite{Tr}.

Now we restrict to our particular context. The roots for studying  generic properties on 
$C_{\lambda}(I)$ come from the paper \cite{B} and this line of study was continued recently in 
\cite{BT}, \cite{BCOT}, \cite{BCOT1}, \cite{BCOT2}. The first observation we can make about maps 
from $C_{\lambda}(I)$ is that they have dense set of periodic points. This follows directly from 
the Poincaré Recurrence Theorem and the fact that in dynamical systems given by an interval map 
the closures of recurrent points and periodic points coincide \cite{CoHe80}.  Furthermore, except 
for the two exceptional maps $\mathrm{id}$ and $1-\mathrm{id}$, every such map has positive metric 
entropy. In fact,  except for these two exceptional maps every map is non-invertible on a set of positive measure and thus by a well known theorem (see for example \cite[Corollary 4.14.3]{Walters}) has positive metric entropy
and thus positive topological entropy as well.

\subsection{Main tools}

There are two main tools that are used in most of the results from this section. 

Building on the work of Bobok \cite[Lemma 1]{B} the following lemma was proven in \cite[Proposition 12]{BT} for the interval case, for the definition of a leo map we refer the reader to Subsection~\ref{subsec:TopExp}.
 
\begin{lem}\label{PAMdense}
The set of maps that are piecewise affine Markov and leo are dense in $C_{\lambda}(M)$.
\end{lem}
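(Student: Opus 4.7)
The plan is to construct, for each $f\in C_\lambda(M)$ and each $\epsilon>0$, a piecewise affine Markov leo map $g\in C_\lambda(M)$ with $\|f-g\|_\infty<\epsilon$, following the strategy initiated in \cite[Lemma~1]{B}. By uniform continuity of $f$, I would pick a partition $0=x_0<x_1<\cdots<x_n=1$ of $I$ (or a corresponding cyclic partition of $\mathbb{S}^1$) of mesh so small that $|f(x)-f(y)|<\epsilon/2$ on each cell $J_i=[x_{i-1},x_i]$, round each value $f(x_i)$ to a nearby partition point $y_i$, and linearly interpolate to obtain a map $g_1$; by construction $\{x_i\}$ is a Markov partition for $g_1$ and $\|f-g_1\|_\infty<\epsilon/2$. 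Lebesgue-measure preservation for this piecewise affine $g_1$ reduces to the finite linear system
\[
\sum_{i:\, g_1(J_i)\supseteq J_k}\frac{1}{|s_i|}=1 \qquad\text{for every cell }J_k,
\]
where $s_i$ is the slope of $g_1$ on $J_i$. By refining the partition and readjusting the choices of $y_i$ within the refinement, this system can be solved, yielding $g_1\in C_\lambda(M)$ piecewise affine on a Markov partition.

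To upgrade $g_1$ to a leo map, I would encode it by its $0/1$ transition matrix $T$ with $T_{ij}=1$ iff $g_1(J_i)\supseteq J_j$. In the Lebesgue-preserving piecewise affine Markov setting (where all slopes satisfy $|s_i|\ge 1$ with strict inequality possible after refinement), the map is leo precisely when $T$ is primitive. If $T$ is not primitive, I would refine the partition further and modify $g_1$ on a small subcell by inserting a \emph{zigzag} of several steeper affine arms whose images reach the cells needed to create the missing transitions in the corresponding row of $T$. Tuning the arm slopes so that $\sum 1/|s_i|$ is unchanged on each affected image cell keeps the map inside $C_\lambda(M)$. Finitely many such localized modifications render the transition matrix primitive and produce a piecewise affine Markov leo $g\in C_\lambda(M)$ with $\|g_1-g\|_\infty<\epsilon/2$, hence $\|f-g\|_\infty<\epsilon$.

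The main obstacle is the bookkeeping in the leo-upgrading step: enlarging the image of a small cell perturbs several Lebesgue-preservation identities simultaneously, and naive modifications destroy $\lambda$-invariance. The resolution is to localize each zigzag inside a single original cell and to take its arms very steep, so that the individual contributions $1/|s|$ are small and the balancing problem reduces, at each stage, to a solvable finite linear system in the new slopes. On the circle the same scheme applies verbatim, provided the cyclic identification is respected when defining both the initial partition and the Markov rounding of the values $f(x_i)$.
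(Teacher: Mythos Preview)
Your strategy diverges from the paper's. The paper does not redo the interval case at all (it simply invokes \cite[Proposition~12]{BT}); for the circle it chains together several existing density lemmas to first reach piecewise affine $\lambda$-preserving maps with all slopes of absolute value at least~$4$ and distinct critical values, then a further cited perturbation to make them Markov, and finally verifies \emph{leo} by a direct expansion argument: slope $\ge 4$ forces $\lambda(g_1(A))\ge 2\lambda(A)$ whenever $A$ contains at most one critical point, so every open arc eventually swallows two consecutive critical points and hence all of $M$. No transition-matrix combinatorics and no zigzag surgery are used.

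Your sketch, by contrast, tries to build everything from scratch, and the crucial step---producing a piecewise affine \emph{Lebesgue-preserving} approximation---is not actually carried out. Rounding $f(x_i)$ to partition points and interpolating linearly gives a Markov piecewise affine map, but almost never one in $C_\lambda(M)$; your proposed fix (``refine and readjust so that $\sum_i 1/|s_i|=1$ on every cell'') is precisely the hard part of \cite[Lemma~1]{B} and \cite[Proposition~12]{BT}, and it does not reduce to solving a linear system in the slopes: the slopes are determined by the $x_i$ and $y_i$, so the constraints are nonlinear in the free data, and a naive choice can force some $|s_i|$ to be $0$ or tiny (e.g.\ when $f$ is nearly constant on a cell), making the identities unsolvable nearby. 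The actual constructions replace pieces of the graph by carefully designed measure-preserving ``tents'' or window perturbations rather than by adjusting endpoints. A similar issue recurs in your leo-upgrade: inserting a zigzag on a subcell changes the preimage counts of many target cells at once, and your assertion that one can always rebalance by choosing steep arms is not justified---the balancing equations couple across cells and need a concrete mechanism (this is exactly what the slope-$\ge 4$ route in the paper sidesteps). As written, the proposal is a reasonable outline of intent but leaves the two genuinely nontrivial steps (measure preservation of the approximant, and achieving leo while staying in $C_\lambda(M)$) as unproven claims.
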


\begin{proof}
The proof in the circle case follows by combining various known results. 
First of all the density of a special collection of  maps in 
$PA_{\lambda}(\mathbb{S}^1)$ was shown in
Lemmas 13 and 14 from \cite{BCOT}, we can assume all of the absolute values of slopes of these maps 
are at least 4.
Then using Lemma 12 from \cite{BCOT2} we can find a dense set of maps in $PA_{\lambda}(\mathbb{S}^1)$
all of whose critical values are distinct and again of whose slopes are at least 4.
Finally using Lemma 12 from \cite{BT} whose proof also works without change for the circle we have that there is a dense set of Markov maps in 
$PA_{\lambda}(\mathbb{S}^1)$.  In this proof it is implicitly left as an exercise to the reader to show that the resulting map is leo, which we do here. Using  the notation of the proof of this lemma in \cite{BT}, let $J:= (p_{i-1},p_{i+1})$ and
$U$ an arbitrary non-empty open interval. Since the slopes of $f$ and $g_1$ are at least 4
we have $\lambda(g_1(A))/\lambda(A) \ge 4/2=2 $ for any non-empty interval which contains at most one critical point, thus
we can find an $n$ such that $V:= g_1^n(V)$ contains at least two consecutive critical points.  From the construction of $g_1$ we have
$f(V \setminus J) = g_1(V \setminus J)$, $f(V \cap J) \subset g_1(V \cap J)$ provided that $\{p_{i-1},p_{i+1}\} \ne \emptyset$, and if $V \subset J$ then since $V$ contains two consecutive critical points we have $g_1(V) = g_1(J) =f(J) \supset f(V)$; and thus $g_1$ is leo.
\end{proof}

The proofs in this section also use window perturbations as the other main tool.
Let $J$ be an arc in $M$ (i.e., a homeomorphic image of $[0,1]$).
 Let $m$ be an odd positive integer and $\{J_i \in M: 1 \le i \le m \}$ a finite collection of arcs satisfying $\cup^{m}_{i=1} J_i = J$ and $\Int(J_i) \cap \Int(J_j) = \emptyset $ when $i \ne j$. We will refer to this as a {\em partition} of $J$.
 
 Fix $f \in C_{\lambda}(M)$
 an arc $J \in M$ and a partition $\{J_i\}$ of $J$.
 A map $h \in C(M)$ is {\em an $m$-fold window perturbation of $f$ with respect to $J$ and the partition $\{J_i\}$} if
 \begin{itemize}
     \item $h|_{J^c} = f|_{J^c}$ 
     \item for each $1 \le i \le m$ the map $h|_{J_i}$ is an affinely scaled copy of $f|_{J}$ with the orientation reversed for every second $i$, with  $h|_{J_1}$ having the same orientation as $f|_{J}$.
 \end{itemize}
 
The essence of this definition is illustrated by Figure~\ref{fig:perturb}.
 
\begin{figure}[!ht]
	\centering
	\begin{tikzpicture}[scale=3.5]
	\draw (0,0)--(0,1)--(1,1)--(1,0)--(0,0);
	\draw[thick] (0,1)--(1/2,0)--(1,1);
	\node at (1/2,1/2) {$f$};
	\node at (5/16,-0.1) {$a$};
	\node at (5/8,-0.1) {$b$};
	\node at (0,-0.1) {$0$};
	\node at (1,-0.1) {$1$};
	\node at (-0.1,1) {$1$};
	\draw[dashed] (5/16,0)--(5/16,3/8)--(5/8,3/8)--(5/8,0);
	\node[circle,fill, inner sep=1] at (5/16,3/8){};
	\node[circle,fill, inner sep=1] at (5/8,1/4){};
	\end{tikzpicture}
	\hspace{1cm}
	\begin{tikzpicture}[scale=3.5]
	\draw (0,0)--(0,1)--(1,1)--(1,0)--(0,0);
	\draw[thick] (0,1)--(5/16,3/8)--(5/16+3/48,0)--(5/16+5/48,1/4)--(5/16+7/48,0)--(5/16+10/48,3/8)--(5/16+13/48,0)--(5/8,1/4)--(1,1);
	\draw[dashed] (5/16,0)--(5/16,3/8);
	\draw[dashed] (5/16+10/48,0)--(5/16+10/48,3/8);
	\draw[dashed] (5/16+5/48,3/8)--(5/16+5/48,0);
	\node at (1/2,1/2) {$h$};
	\node at (0,-0.1) {$0$};
	\node at (1,-0.1) {$1$};
	\node at (-0.1,1) {$1$};
	\node at (5/16,-0.1) {$a$};
	\node at (5/8,-0.1) {$b$};
	\draw[dashed] (5/16,3/8)--(5/8,3/8)--(5/8,0);
	\node[circle,fill, inner sep=1] at (5/16,3/8){};
	\node[circle,fill, inner sep=1] at (5/16+5/48,1/4){};
	\node[circle,fill, inner sep=1] at (5/16+10/48,3/8){};
	\node[circle,fill, inner sep=1] at (5/8,1/4){};
	\end{tikzpicture}
	\caption{For  $f\in C_{\lambda}(M)$ shown on the left, we show on the right the graph of $h$ which is
 a $3$-fold piecewise window perturbation of $f$ on the interval $[a,b]$.}\label{fig:perturb} 
\end{figure}
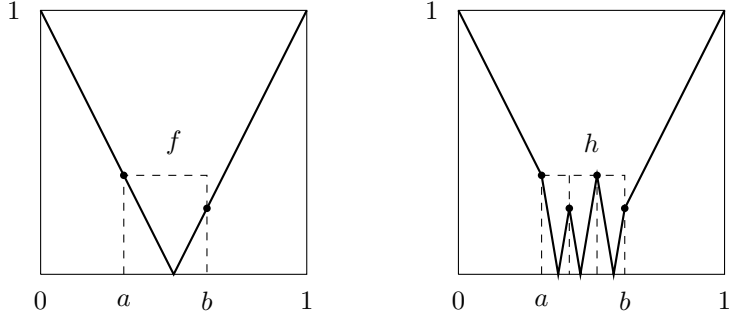

\subsection{Measure-theoretic properties}

Let $\mathcal{B}$ denote the Borel sets in $M$.
The measure-preserving transformation  $(f,M,\mathcal{B},\mu)$ is called
\begin{itemize}
	\item \textit{ergodic} if for every $A\in \mathcal{B}$, $f^{-1}(A)=A$ $\mu$-a.e. implies that $\mu(A)=0$ or $\mu(A^c)=0$.
	\item  \textit{weakly mixing}, if for every $A,B\in\mathcal{B}$,
	$$\lim_{n\to\infty}\frac{1}{n}\sum_{j=0}^{n-1}\vert \mu(f^{-j}(A)\cap B)-\mu(A)\mu(B)\vert=0.$$
    \item \textit{strongly mixing}  if for every $A, B \in \mathcal{B}, \lim_{n\to\infty}\mu(f^{j}(A)\cap B)=\mu(A)\mu(B)$.

	\item {\em measure-theoretically exact} if for each set $A\in \cap_{n\geq 0}f^{-n}(\mathcal{B})$ it holds that $\mu(A)\mu(A^{c})=0$.
\end{itemize}
In this paper we will mainly focus on the case of particular invariant measure, the Lebesgue measure $\lambda$.

The following measure-theoretic properties of $C_{\lambda}(I)$-generic functions were proven in \cite{BT}. 

\begin{thm}\label{thm:weakmixingI}
The $C_{\lambda}(I)$-generic function 
\begin{enumerate}
	\item is weakly mixing with respect to Lebesgue measure $\lambda$ \cite[Th. 15]{BT},
	\item maps a set of Lebesgue measure zero onto $I$ \cite[Cor. 22]{BT}.
\end{enumerate}
\end{thm}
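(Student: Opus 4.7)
The plan is to prove (1) and (2) separately via the standard denseness-plus-$G_\delta$ recipe, drawing on Lemma \ref{PAMdense} as the dense backbone.

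For (1), I would establish the $G_\delta$ character of weak mixing via the equivalence ``$f$ weakly mixing for $\lambda$ iff $f\times f$ ergodic for $\lambda\times\lambda$ on $I^2$,'' together with the standard $G_\delta$ formulation of ergodicity: for every pair $E,F$ of positive product-measure drawn from a countable generating algebra (say dyadic rectangles), there exists $n\in\N$ with $(\lambda\times\lambda)\bigl((f\times f)^{-n}E\cap F\bigr)>0$. For such $E,F$ the functional $f\mapsto(\lambda\times\lambda)\bigl((f\times f)^{-n}E\cap F\bigr)$ is continuous on $C_\lambda(I)$ in the uniform topology, because $\partial E,\partial F$ are finite and measure-preservation controls preimages, so each ``$\exists n$'' condition is open and the intersection over countably many rectangle pairs is $G_\delta$. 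Density of weakly mixing maps then comes from Lemma \ref{PAMdense}: any $f$ admits a uniform approximation by a leo piecewise affine Markov $g\in C_\lambda(I)$ whose Markov transition matrix is primitive (leo forces $g^n(P_i)=I$ for every piece $P_i$) and whose slopes can be taken at least $4$; standard Perron--Frobenius theory for piecewise expanding Markov maps then yields exactness of $(g,\lambda)$, hence weak mixing.

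For (2), I would show that
\[
V=\bigl\{f\in C_\lambda(I):\exists N\subset I,\ \lambda(N)=0,\ f(N)=I\bigr\}
\]
contains a residual set. Starting from a leo piecewise affine Markov $g$ furnished by Lemma \ref{PAMdense}, fix a countable dense $D=\{d_k\}\subset I$. Primitivity of the Markov graph together with expansion by factor at least $4$ allows, at each depth $k$, a finite family of Markov cylinders of depth $n(k)$ (each of Lebesgue measure at most $4^{-n(k)}$) whose $g$-images cover $\{d_1,\dots,d_k\}$. Nesting these families across $k$ and intersecting produces a compact Cantor-like $K\subset I$ of Lebesgue measure zero (by geometric decay) whose image under $g$ is closed and contains the dense set $D$, hence equals $I$. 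To promote this to genericity, introduce for each $n$ the approximate condition
\[
V_n:=\{f: \exists K\text{ compact},\ \lambda(K)<1/n,\ f(K)\text{ is }1/n\text{-dense in }I\},
\]
show each $V_n$ is open and dense in $C_\lambda(I)$ by carrying the witness $K$ inside a leo Markov scaffold (where surjectivity of $g^{n(K)}$ on cylinders is robust under small $C^0$-perturbations respecting the partition), and then extract from the $V_n$'s a single null witness for generic $f$ by compactness and diagonalization.

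The main obstacle is (2). The condition ``$f(K)=I$'' is not $C^0$-open in general, since an arbitrarily small uniform perturbation can delete a point from $f(K)$; this forces the intermediate weakening to ``$1/n$-dense image,'' which is open, and then requires a compactness-plus-continuity argument to upgrade approximate denseness to exact image $I$ in the limit. Keeping the witnesses at different scales coherent is the reason one works throughout inside a single leo Markov structure rather than performing independent ad hoc perturbations. Part (1) by comparison is essentially routine once Lemma \ref{PAMdense} and the product-ergodicity reformulation are in hand.
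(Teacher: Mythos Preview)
The paper itself is a survey and does not prove this theorem; it simply records the two assertions and cites \cite[Th.~15]{BT} and \cite[Cor.~22]{BT} for the proofs. So there is no ``paper's own proof'' to compare against here beyond those citations.

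Your treatment of part~(1) is sound and is the standard route: the $G_\delta$ formulation via product ergodicity on dyadic rectangles works because $f\mapsto \lambda(f^{-n}(J)\cap J')$ is continuous on $C_\lambda(I)$ for intervals $J,J'$ (measure preservation bounds the symmetric difference of preimages under a uniform perturbation), and Lemma~\ref{PAMdense} supplies a dense set of exact, hence weakly mixing, maps.

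Part~(2), however, has a genuine gap at the step you flag as ``compactness and diagonalization.'' From $f\in\bigcap_n V_n$ you obtain for each $n$ a compact $K_n$ with $\lambda(K_n)<1/n$ and $f(K_n)$ $\tfrac1n$-dense. Passing to a Hausdorff-convergent subsequence $K_{n_j}\to K$ does give $f(K)=I$ by continuity of $f$, but it does \emph{not} give $\lambda(K)=0$: Lebesgue measure is not upper semicontinuous for Hausdorff limits (e.g.\ $\{k/n:0\le k\le n\}\to[0,1]$). Taking unions fails too, since $\sum 1/n$ diverges, and taking $\limsup$ of sets with summable measures loses the surjectivity of the image. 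Your remark about ``keeping the witnesses at different scales coherent'' via a single Markov scaffold addresses the \emph{dense} part of the argument but not the $G_\delta$ part: a generic $f$ is not piecewise affine Markov, so you cannot appeal to cylinders of $f$ itself. To close the gap you need either a genuinely $G_\delta$ condition that \emph{implies} the existence of a null set mapping onto $I$ (for instance a condition forcing every fibre $f^{-1}(y)$ to be perfect, after which a selection argument applies), or a scheme producing \emph{nested} compact witnesses $K_1\supset K_2\supset\cdots$ for generic $f$, so that $N=\bigcap_n K_n$ is simultaneously null and has $f(N)=I$. As written, the $V_n$'s do not deliver this.
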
 

In analogy to Rohlin's classical result \cite{Ro48}, the following result was proven for the interval in  \cite[\S 3]{BT}. For the circle case we need several modifications, first for the density \cite[Proposition 12]{BT}
is replaced by Lemma \ref{PAMdense}.  Except for the  application of the Block Coven result (\cite[Theorem 9]{BC}, \cite[Theorem 18]{BT}), all the other steps work for the circle without change.  To apply \cite[Theorem 9]{BC}  in the circle case it suffices to start with a sufficiently fine Markov partition such that 
the diameter of the  image of each element of the partition is less than the diameter of the circle.
\begin{thm}\label{thm:nonmix}
The set of mixing maps in $C_{\lambda}(M)$ is dense and  
is of the first category.
\end{thm}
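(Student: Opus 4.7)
The statement splits into density and meagerness of the strongly mixing maps, which I would handle separately.

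For density, I would combine Lemma~\ref{PAMdense} with the ergodic theory of piecewise affine expanding Markov maps. Any piecewise affine Markov leo $g\in C_\lambda(M)$ is uniformly expanding with Lebesgue as its invariant density; because $g$ is leo, the underlying subshift of finite type is topologically mixing, so the transfer operator of $g$ on $BV$ has a spectral gap, yielding exponential decay of correlations and in particular strong mixing of $(g,\lambda)$. Since such maps are dense in $C_\lambda(M)$ by Lemma~\ref{PAMdense}, mixing maps are dense.

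For meagerness, I would follow the Rohlin scheme as implemented on the interval in \cite[\S 3]{BT}. Fix $\delta\in(0,\tfrac14)$ and, for each $N\in\N$, define
\[
G_{N,\delta}:=\bigl\{f\in C_\lambda(M):\exists\, n\ge N,\ \exists\, A\subset M\text{ a finite union of arcs with }\lambda(A)=\tfrac12,\ \lambda(f^{-n}(A)\triangle A)<\delta\bigr\}.
\]
Each $G_{N,\delta}$ is open: if $(n,A)$ is a witness for $f_0$, one may slightly perturb $A$ so that its endpoints are non-periodic for $f_0^n$ without destroying the defining inequality, after which $f\mapsto\lambda(f^{-n}(A)\triangle A)$ is continuous at $f_0$. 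The intersection $\mathcal{R}_\delta:=\bigcap_N G_{N,\delta}$ is therefore residual once density of each $G_{N,\delta}$ is established. The meagerness of mixing maps then follows as in \cite[\S 3]{BT} via a compactness argument on the witnessing sequence $(A_N,n_N)$: along a subsequence on which $A_N$ stabilizes in the appropriate sense, strong mixing forces $\lambda(f^{-n_N}(A_N)\triangle A_N)\to\tfrac12$, contradicting $<\delta<\tfrac14$.

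The essential new step for the circle is the density of $G_{N,\delta}$. Starting from $f\in C_\lambda(M)$ and $\varepsilon>0$, I first invoke Lemma~\ref{PAMdense} to obtain a piecewise affine Markov leo map $g$ within $\varepsilon/2$ of $f$, with Markov partition $\mathcal P$; on $\mathbb{S}^1$, I refine $\mathcal P$ further so that each image arc $g(P)$ has diameter strictly less than that of $\mathbb{S}^1$, placing us squarely within the hypotheses of \cite[Theorem~9]{BC}. I then apply the window-perturbation scheme from \cite[Theorem~18]{BT} to produce an $\varepsilon/2$-close Lebesgue-preserving $\tilde g$ admitting a cyclic Markov block decomposition of period $p\ge N$: a collection of $2p$ Markov subarcs covering $M$, cyclically permuted by $\tilde g$ up to a Lebesgue-negligible error. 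Taking $A$ to be the union of the odd-indexed blocks yields $\lambda(A)=\tfrac12$ and $\lambda(\tilde g^{-p}(A)\triangle A)<\delta$, so $\tilde g\in G_{N,\delta}$. The principal obstacle, and the only point where the argument departs from the interval case, is performing this cyclic Markov construction on $\mathbb{S}^1$ while preserving Lebesgue measure; this is exactly what the fine-partition refinement is designed to secure, letting the linear-order mechanism of \cite[Theorem~9]{BC} be transferred to the circle.
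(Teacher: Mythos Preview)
Your proposal is correct and follows essentially the same route as the paper. Both arguments obtain density from Lemma~\ref{PAMdense} (you make the mixing of PAM leo maps explicit via a transfer-operator spectral gap, whereas the paper simply defers to \cite{BT}), and both obtain first category by the Rohlin-type scheme of \cite[\S3]{BT}, with the single circle-specific modification being exactly the one you identify: refining the Markov partition so that each image arc has diameter strictly less than that of $\mathbb{S}^1$, so that \cite[Theorem~9]{BC} (via \cite[Theorem~18]{BT}) applies. Your write-up simply unpacks more of the $G_{N,\delta}$ mechanics than the paper's sketch does.
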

 
Returning to the case when $M=\mathbb{S}^1$,
the following result was obtained in \cite[Theorem 2]{BCOT2}.  For each $\alpha\in[0,1)]$, let $r_{\alpha}\colon \mathbb{S}^1\to \mathbb{S}^1$ be a circle rotation for the angle $\alpha$.
Define the operator $T_{\alpha,\beta}\colon C_{\lambda}(\mathbb{S}^1) \mapsto  C_{\lambda}(\mathbb{S}^1)$ by $T_{\alpha,\beta}(f)=r_{\alpha}\circ f\circ r_{\beta}.$ 
 
 \begin{thm}\label{thm:3}
 	There exists a dense $G_{\delta}$ subset $\mathcal{G}$ of $C_{\lambda}(\mathbb{S}^1)$ such that
 	\begin{enumerate} \item each $g \in \mathcal{G}$ is weakly mixing with respect to $\lambda$, 
  \item each $g \in \mathcal{G}$ maps a set of Lebesgue measure zero onto $\mathbb{S}^1$, and
 		\item
 		for each pair $\alpha,\beta\in [0,1)$ and each $g\in \mathcal{G}$ the map $T_{\alpha,\beta}(g)\in \mathcal{G}$.
 	\end{enumerate}
 \end{thm}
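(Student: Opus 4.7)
The plan is to establish a circle analog of Theorem \ref{thm:weakmixingI} (yielding a dense $G_\delta$ set $\mathcal{G}_0$ satisfying (1) and (2)) and then to enlarge the $G_\delta$ condition to make it invariant under the rotations, exploiting compactness of the parameter space $\mathbb{T}^2\cong[0,1)^2$. Using Lemma \ref{PAMdense} together with window perturbations, one runs arguments parallel to those for Theorem \ref{thm:weakmixingI} (in the circle setting) to obtain
$$\mathcal{G}_0 \;=\; \bigcap_{n\in\N} U_n,$$
a dense $G_\delta$ set with each $U_n$ open and dense, consisting of maps that are weakly mixing with respect to $\lambda$ and map some Lebesgue null set onto $\mathbb{S}^1$.

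The rotation invariance is obtained as follows. Since $(f,\alpha,\beta)\mapsto T_{\alpha,\beta}(f)$ is jointly continuous and $\mathbb{T}^2$ is compact, the orbit $\mathcal{O}(f):=\{T_{\alpha,\beta}(f):(\alpha,\beta)\in[0,1)^2\}$ is a compact subset of $C_\lambda(\mathbb{S}^1)$ depending continuously in Hausdorff distance on $f$. Set
$$\tilde U_n \;:=\; \bigl\{f\in C_\lambda(\mathbb{S}^1): \mathcal{O}(f)\subset U_n\bigr\}.$$
A standard uniform-neighborhood argument (since $U_n$ is open and $\mathcal{O}(f)$ compact) shows that $\tilde U_n$ is open in $C_\lambda(\mathbb{S}^1)$: if $\mathcal{O}(f)\subset U_n$, then a sufficiently small perturbation of $f$ keeps the whole orbit inside $U_n$.

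For density of $\tilde U_n$ near a given $f_0$, first approximate $f_0$ by a piecewise affine Markov leo map $f_1$ with slopes uniformly $\geq 4$ via Lemma \ref{PAMdense}. For every $(\alpha,\beta)$, $T_{\alpha,\beta}(f_1)=r_\alpha\circ f_1\circ r_\beta$ is again piecewise affine with the same absolute slopes and therefore leo and uniformly expanding, and hence Lebesgue-exact, in particular weakly mixing. A further small window perturbation of $f_1$ yields $f$ that also satisfies the finitary property defining the null-onto-full factor of $U_n$; crucially, the null-onto-full property itself is $T_{\alpha,\beta}$-invariant, because if $\lambda(E)=0$ and $f(E)=\mathbb{S}^1$, then $\lambda(r_{-\beta}(E))=0$ and $T_{\alpha,\beta}(f)(r_{-\beta}(E))=r_\alpha(f(E))=\mathbb{S}^1$. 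Hence the perturbed $f$ has $\mathcal{O}(f)\subset U_n$, i.e.\ $f\in\tilde U_n$.

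Setting $\mathcal{G}:=\bigcap_n\tilde U_n$, Baire gives a dense $G_\delta$. Each $f\in\mathcal{G}$ satisfies $f=T_{0,0}(f)\in\mathcal{G}_0$, giving (1) and (2), while (3) follows at once from $\mathcal{O}(T_{\alpha,\beta}(f))=\mathcal{O}(f)\subset U_n$ for every $n$. The main obstacle is the density argument: one must engineer the window perturbations so as to install the null-onto-full property in $f$ while simultaneously preserving enough uniform expansion for weak mixing to persist throughout the entire rotation orbit $\mathcal{O}(f)$.
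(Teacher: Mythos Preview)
Your overall architecture---write the generic set as $\bigcap U_n$, replace each $U_n$ by its rotation-saturation $\tilde U_n=\{f:\mathcal{O}(f)\subset U_n\}$, and use compactness of $\mathbb{T}^2$ for openness---is precisely the strategy the paper (via \cite{BCOT2}) follows. Two points deserve comment.

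First, the density step. You assert that for piecewise affine $f_1$ with large slopes, every $T_{\alpha,\beta}(f_1)$ is ``leo and uniformly expanding, and hence Lebesgue-exact.'' The implication you need here is exactly Theorem~\ref{thm:mixingexact}: for piecewise $C^2$ maps in $C_\lambda(\mathbb{S}^1)$, leo (equivalently, topological mixing) is equivalent to measure-theoretic exactness. Uniform expansion alone does \emph{not} yield exactness---piecewise expanding systems can have several ergodic components or cyclic decompositions---so you must first verify that each $T_{\alpha,\beta}(f_1)$ is leo and then invoke Theorem~\ref{thm:mixingexact}. The paper flags this explicitly: Theorem~\ref{thm:mixingexact} is ``an important ingredient of the proof of Theorem~\ref{thm:3}.'' The claim that $T_{\alpha,\beta}(f_1)$ is leo is not automatic from ``same absolute slopes'': leo is not in general preserved under post- or pre-composition with rotations. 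What makes it work is the content of Theorem~\ref{thm:leoS1}, which gives a rotation-invariant open dense set of leo maps; with that in hand, your density argument goes through cleanly.

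Second, the null-onto-full part. You try to manufacture property~(2) by a window perturbation and then worry (in your final paragraph) about making this compatible with the weak-mixing argument. This is unnecessary. As the paper observes just after the statement of Theorem~\ref{thm:3}, the set of maps satisfying~(2) is already a dense $G_\delta$ (circle version of \cite[Cor.~22]{BT}) \emph{and} is itself $T_{\alpha,\beta}$-invariant: if $g$ maps a null set onto $\mathbb{S}^1$ then so does $T_{\alpha,\beta}(g)$. Hence the $G_\delta$ set for~(2) needs no saturation at all; simply intersect it with the rotation-saturated weak-mixing $G_\delta$ from the first part. The ``main obstacle'' you identify is therefore a non-issue, and no window perturbation is required here.
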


 Point (2) was shown in \cite[Cor. 22]{BT} for the interval, the proof holds without change in the case of the circle, and furthermore if $g$ maps a set of Lebesgue measure zero onto $\mathbb{S}^1$, then so does $T_{\alpha,\beta}(g)$.

 In \cite[Theorem 2]{BCOT1} it was shown that there is a dense set of  non-ergodic maps in $C_{\lambda}(I)$; its proof works with natural modifications also in $C_{\lambda}(\mathbb{S}^1)$. Thus Theorem~\ref{thm:3} is optimal  since there is no nonempty open set of maps satisfying nice mixing properties for any $\alpha,\beta$.

For volume preserving homeomorphisms we have the following classical result on the unit cube $I^n$ due to 
Oxtoby and Ulam  \cite{OxUl} (see also\cite[Theorem 7.1]{AP}):
\begin{thm}\label{thm:OU}
    The generic $f \in H_{\lambda}(I^n)$ is ergodic for all $n \ge 2$.
\end{thm}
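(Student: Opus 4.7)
The plan is to apply the Baire category theorem in the complete metric space $(H_\lambda(I^n), d_\infty)$, following the classical Oxtoby--Ulam strategy. For each $k\ge 1$, partition $I^n$ into $N_k=2^{nk}$ dyadic cubes of side $2^{-k}$, and call $p\in H_\lambda(I^n)$ a \emph{cyclic dyadic permutation of order $k$} if $p$ restricted to each such cube is a rigid translation onto another dyadic cube of side $2^{-k}$, with the induced bijection on cubes being a single $N_k$-cycle. The hypothesis $n\ge 2$ enters here: such permutations exist in abundance in $H_\lambda(I^n)$ only for $n\ge 2$, where cube exchanges can be glued continuously across boundaries (for $n=1$ the only measure-preserving homeomorphisms are the identity and the flip). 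Fix a rapidly decreasing sequence $\delta_k>0$, to be calibrated, and set $\mathcal{F}_k=\bigcup_p B(p,\delta_k)$ with the union over all cyclic dyadic permutations of order $k$; each $\mathcal{F}_k$ is open, so $\mathcal{E}:=\bigcap_k \mathcal{F}_k$ is a $G_\delta$. The goal is to show that $\mathcal{E}$ is dense and that every $f\in\mathcal{E}$ is ergodic.

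For density, given $f\in H_\lambda(I^n)$ and $\epsilon>0$, I would perform a tower construction: inductively select cyclic dyadic permutations $p_k$ of increasing order, approximating $f$ in a telescoping manner so that the uniform limit $g=\lim_k p_k$ exists in $H_\lambda(I^n)$, lies in every $\mathcal{F}_j$, and satisfies $d(g,f)<\epsilon$. The base step is the Oxtoby--Ulam approximation lemma: any element of $H_\lambda(I^n)$ admits uniform approximation by dyadic permutations of arbitrarily high order, proved via a Hall-marriage argument on cubes using measure preservation, followed by composition with cube transpositions of $d_\infty$-cost $O(2^{-k})$ each to merge the resulting orbit structure into a single cycle.

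For the ergodicity implication, I argue by contradiction: suppose $f\in\mathcal{E}$ admits an $f$-invariant set $A$ with $0<\lambda(A)<1$. The Lebesgue density theorem supplies $k_0$ and cubes $Q_i,Q_j\in\mathcal{Q}_{k_0}$ with $\lambda(A\cap Q_i)>(1-\eta)\lambda(Q_i)$ and $\lambda(A\cap Q_j)<\eta\lambda(Q_j)$ for any prescribed $\eta>0$. Take $k\gg k_0$ and a cyclic dyadic permutation $p$ of order $k$ with $d(f,p)<\delta_k$. A cyclicity computation---averaging $\lambda(p^m(A\cap Q_i)\cap Q_j)$ over $m\in\{0,\ldots,N_k-1\}$ gives the exact value $\lambda(A\cap Q_i)\lambda(Q_j)$---yields some $m^*\le N_k$ with $\lambda(p^{m^*}(A\cap Q_i)\cap Q_j)\ge(1-\eta)\lambda(Q_i)\lambda(Q_j)$. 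Provided $\delta_k$ was calibrated so that $d(f^m,p^m)$ stays small for all $m\le N_k$, the analogous estimate transfers from $p^{m^*}$ to $f^{m^*}$ up to a small error, and then $f^{m^*}(A\cap Q_i)\subset A$ forces $\lambda(A\cap Q_j)\gtrsim(1-\eta)\lambda(Q_i)\lambda(Q_j)$, contradicting $\lambda(A\cap Q_j)<\eta\lambda(Q_j)$ for $\eta$ small enough. The main obstacle is this calibration: $\delta_k$ must control telescoped iterates $d(f^m,p^m)$, whose bound depends on moduli of continuity of the candidate maps in $\mathcal{F}_k$ only implicitly, while remaining large enough to accommodate the density tower---the classical technical heart of the Oxtoby--Ulam argument, typically handled by refining $\mathcal{F}_k$ in terms of cube-transition proximity rather than plain $d_\infty$-balls.
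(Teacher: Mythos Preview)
The paper does not give a proof of this theorem; it is stated as a classical result due to Oxtoby and Ulam \cite{OxUl}, with a reference to \cite[Theorem~7.1]{AP}, and no argument is supplied. So there is nothing in the paper to compare your attempt against.

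On its own merits, your sketch follows the broad contours of the original Oxtoby--Ulam strategy, but it contains a genuine gap. A dyadic permutation in your sense---a rigid translation of each subcube onto another---is discontinuous across cube faces and therefore does not belong to $H_\lambda(I^n)$; your parenthetical about gluing ``continuously across boundaries'' does not repair this, since no homeomorphism can restrict to a rigid translation on each closed subcube unless the permutation is trivial. The classical arguments sidestep this either by treating cyclic dyadic permutations as measure-automorphisms (not homeomorphisms), establishing that ergodicity is $G_\delta$ in the weak topology on the automorphism group, and then invoking the Oxtoby--Ulam/Alpern approximation of automorphisms by homeomorphisms; or by a conjugacy lemma showing that $\{h^{-1}gh:h\in H_\lambda(I^n)\}$ is uniformly dense for a fixed ergodic $g$. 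Your own closing remarks already identify the second obstruction: controlling $d_\infty(f^m,p^m)$ for all $m\le N_k=2^{nk}$ from a bound on $d_\infty(f,p)$ alone would require equicontinuity that simply is not available in $H_\lambda(I^n)$, and you correctly note that the $G_\delta$ sets must be defined by combinatorial cube-transition conditions rather than metric balls. As written, then, the proposal is an outline with both hard technical steps (the nature of the approximants and the calibration) explicitly deferred rather than carried out.
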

\noindent 
The improvement of Theorem~\ref{thm:OU} was shown in \cite[Theorem 1.2]{A00}:
\begin{thm}
The generic $f \in H_{\lambda}(\mathbb{M})$ is weakly mixing and periodic points of $f$ are dense in $\mathbb{M}$ for any compact manifold $\mathbb{M}$ of dimension at least 2.
\end{thm}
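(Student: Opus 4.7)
The plan is to follow Alpern's two-step strategy: use an Oxtoby--Ulam reduction to put both properties in a standard setting, and then establish each property as a dense $G_{\delta}$ by combining a combinatorial (dyadic permutation) approximation with a Halmos-style conjugacy lemma adapted to the uniform topology.

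First, I would reduce to $\mathbb{M}=I^n$. Any compact manifold of dimension $n\geq 2$ carrying a Lebesgue-class measure admits, by the Oxtoby--Ulam homeomorphism theorem, a measure-preserving homeomorphism onto the $n$-cube carrying normalized Lebesgue measure (up to a negligible boundary identification). Both weak mixing and denseness of periodic points are invariants of topological measure-preserving conjugacy, so generic behavior in $H_{\lambda}(I^n)$ transfers to $H_{\lambda}(\mathbb{M})$.

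Next, I would verify that each of the two properties defines a $G_{\delta}$ subset of $H_{\lambda}(\mathbb{M})$ in the uniform topology. For weak mixing I would use the standard characterization via the Koopman operator (no non-constant eigenfunctions in $L^2_0$), which is well known to be $G_{\delta}$ in the weak topology on invertible measure-preserving transformations; since the inclusion $H_{\lambda}(\mathbb{M})\hookrightarrow \mathrm{Aut}(\mathbb{M},\lambda)$ is continuous when the target carries the weak topology, the pullback is $G_{\delta}$ in the uniform topology. For density of periodic points I would write
\[
\{f : \overline{\per(f)}=\mathbb{M}\}=\bigcap_{U}\bigcup_{n\ge 1}\{f : f^n\text{ has a fixed point in }U\},
\]
where $U$ ranges over a countable basis, and argue openness of each inner set via a stability principle for fixed points of measure-preserving maps in dimension $\ge 2$ (Brouwer/Lefschetz-type index arguments guarantee that non-degenerate fixed points of $f^n$ persist under small uniform perturbations once $f$ is generic enough; one may absorb the degenerate cases by taking a countable union over arbitrarily small perturbations).

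For density, the workhorse is Lax's theorem, which asserts that every $f \in H_{\lambda}(I^n)$ can, for each $\eps>0$, be uniformly $\eps$-approximated by a homeomorphism realizing a cyclic permutation of a dyadic partition of $I^n$ of mesh $<\eps$. Such a cyclic dyadic permutation has a single periodic orbit visiting every partition element, hence $\eps$-dense periodic points. Combined with the uniform conjugacy lemma of Alpern (the uniform analog of Halmos's conjugacy lemma: the uniform conjugacy class of any aperiodic element of $H_{\lambda}(\mathbb{M})$ is uniformly dense), one deduces that the conjugacy class of any fixed weakly mixing homeomorphism $f_0$ is uniformly dense, giving denseness of weakly mixing maps; simultaneously, approximants that are uniformly close to cyclic dyadic permutations have $\eps$-dense periodic orbits. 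Intersecting the two residual sets then yields the theorem.

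The main obstacle, in my view, is the openness (and hence $G_{\delta}$) part of \emph{dense periodic points}: purely topologically, a small uniform perturbation can destroy periodic points, so one cannot treat this exactly in parallel with weak mixing. The resolution is to exploit the volume-preserving constraint, which, via Brouwer-type translation and index arguments valid only for $n\ge 2$, forces robust persistence of certain periodic orbits under small perturbations within $H_{\lambda}(\mathbb{M})$. This is precisely the place where the dimensional hypothesis $\dim\mathbb{M}\ge 2$ enters in an essential way.
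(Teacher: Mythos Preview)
The paper does not prove this theorem; it merely quotes it from Akin~\cite{A00} (with the weak mixing part also implicit in Alpern--Prasad~\cite{AP}). So there is no ``paper's own proof'' to compare against, and your proposal is effectively an attempt to reconstruct the argument behind the citation. Your broad outline---reduce to a standard model, pull back the $G_\delta$ for weak mixing from the weak topology on $\mathrm{Aut}(\lambda)$, and obtain density via Lax-type dyadic approximation combined with an Alpern/Halmos conjugacy lemma---is indeed the spirit of the Alpern--Prasad machinery.

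That said, two steps in your write-up are not yet proofs. First, the ``reduction to $I^n$'' is not a homeomorphism: a general compact manifold $\mathbb{M}$ is of course not homeomorphic to the cube. What Oxtoby--Ulam (and Brown) give you is a continuous surjection $I^n\to\mathbb{M}$ that is a homeomorphism off a boundary set of measure zero; transferring genericity statements across such a quotient requires care (this is handled in~\cite{AP}, but it is not the one-line conjugacy you describe). Second, and more seriously, your $G_\delta$ argument for dense periodic points does not work as written. The sets $\{f:\fix(f^n)\cap U\neq\emptyset\}$ are \emph{not} open in $H_\lambda(\mathbb{M})$: a periodic point with zero fixed-point index can be destroyed by an arbitrarily small volume-preserving perturbation, and your appeal to ``non-degenerate fixed points persist once $f$ is generic enough'' is circular---you cannot assume $f$ generic while establishing that the target set is $G_\delta$. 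The phrase ``absorb the degenerate cases by taking a countable union over arbitrarily small perturbations'' has no clear meaning. In the actual proofs (Akin, Alpern--Prasad) one does \emph{not} show that $\{f:\overline{\per(f)}=\mathbb{M}\}$ is itself $G_\delta$; rather one exhibits an explicit dense $G_\delta$ set (built from the Lax/Rokhlin-tower approximation at all scales) and then argues, using the volume-preserving constraint and a fixed-point/index argument applied to the dyadic permutation structure, that every map in that set has dense periodic points. You have identified the right obstacle, but the resolution is to change the target $G_\delta$, not to force openness where it fails.
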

 
\subsection{Topological expansion properties}\label{subsec:TopExp}
We call a map $f\in C_{\lambda}(M)$ 
\begin{itemize}
	\item \textit{transitive} if for all nonempty open sets $U,V\subset M$ there exists $n\ge 0$ so that $f^n(U)\cap V\neq\emptyset$,
	\item \textit{totally transitive} if $f^n$ is transitive for all $n\geq1$.
	\item \textit{(topologically) weakly mixing} if its Cartesian product $f\times f$ is transitive,
	\item \textit{topologically mixing} if for all nonempty open sets $U,V\subset M$ there exists $n_0\geq0$ so that $f^n(U)\cap V\neq\emptyset$ for every $n\ge
	n_0$,
	\item  \textit{leo} (\textit{locally eventually onto}, also known as \textit{topologically exact}) if for every nonempty open set $U\subset M$ there exists $n\in{\mathbb N}$ so that $f^n(U)=M$.
\end{itemize}

By the usual hierarchy in topological dynamics,  every leo map is topologically mixing, topologically weakly mixing, totally transitive and transitive.
In \cite[Theorem 9]{BT} the following theorem  was shown:

\begin{thm}\label{t2}
The $C_{\lambda}(I)$-generic function is leo.
\end{thm}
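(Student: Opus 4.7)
The plan is to exhibit a dense $G_\delta$ subset of $C_\lambda(I)$ consisting entirely of leo maps. Fix a countable basis $\{U_k\}_{k\ge 1}$ of $I$ made of open intervals with rational endpoints; then $f$ is leo iff for every $k$ there exists $n$ with $f^n(U_k)=I$. Setting
$$\mathcal{L}_k := \{f\in C_\lambda(I):\exists n\ge 1,\ f^n(U_k)=I\},$$
it is enough to prove that $\Int(\mathcal{L}_k)$ is dense in $C_\lambda(I)$ for every $k$, since then $\bigcap_k \Int(\mathcal{L}_k)$ is a dense $G_\delta$ all of whose members are leo.

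The central ingredient is a one-step \emph{escape lemma}: if $h\in C_\lambda(I)$ satisfies $h^{-1}(\{0,1\})\subset(0,1)$, and if $h^N(U_k)\supset(\delta,1-\delta)$ for some $\delta>0$ so small that also $h^{-1}(\{0,1\})\subset(\delta,1-\delta)$, then
$$h^{N+1}(U_k)=h\bigl(h^N(U_k)\bigr)\supseteq h\bigl((\delta,1-\delta)\bigr)\supseteq h\bigl(h^{-1}(\{0,1\})\bigr)=\{0,1\},$$
and as a connected subset of $I$ containing both $0$ and $1$, the set $h^{N+1}(U_k)$ must equal $I$. This lemma converts the robust (indeed open) condition ``$h^N(U_k)$ strictly contains $(\delta,1-\delta)$'' into the exact equation $h^{N+1}(U_k)=I$ at the price of one extra iterate.

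To show $\Int(\mathcal{L}_k)$ is dense, fix $f\in C_\lambda(I)$ and $\varepsilon>0$. Using Lemma~\ref{PAMdense}, together with a small further modification adjusting the values at the two endpoints $\{0,1\}$, I would produce a piecewise affine Markov leo map $\tilde f$ within $\varepsilon$ of $f$ such that the finite set $\tilde f^{-1}(\{0,1\})$ is contained in $(0,1)$. Since $\tilde f$ is leo, there is $N$ with $\tilde f^N(U_k)=I$. Pick $\delta_0>0$ so that $\tilde f^{-1}(\{0,1\})\subset(\delta_0,1-\delta_0)$ and $\tilde f(x)\notin\{0,1\}$ for every $x\in[0,\delta_0]\cup[1-\delta_0,1]$. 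For any $g\in C_\lambda(I)$ in a sufficiently small uniform neighborhood of $\tilde f$, continuity of $g\mapsto g^N$ yields $g^N(U_k)\supset(\delta,1-\delta)$ for a prescribed $\delta<\delta_0$; and because $\tilde f$ is uniformly bounded away from $\{0,1\}$ on the endpoint strips $[0,\delta_0]\cup[1-\delta_0,1]$, the same persists for $g$ close to $\tilde f$, forcing $g^{-1}(\{0,1\})\subset(\delta_0,1-\delta_0)$. Applying the escape lemma to $g$ then gives $g^{N+1}(U_k)=I$, i.e.\ $g\in\mathcal{L}_k$; hence $\tilde f\in\Int(\mathcal{L}_k)$, and density follows.

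The main obstacle this plan must negotiate is that the exact condition $f^n(U_k)=I$ is never an open condition in $C_\lambda(I)$: since $0$ and $1$ are boundary points of the target interval $I$, any small sup-norm perturbation can nudge the extrema of $f^n|_{U_k}$ strictly into $(0,1)$ and destroy exact surjectivity. The escape lemma is precisely the device that circumvents this, trading one extra iterate for openness, and it is the reason the setup step must enforce $\tilde f^{-1}(\{0,1\})\subset(0,1)$ --- a condition slightly stronger than what Lemma~\ref{PAMdense} produces directly, but achievable by a mild endpoint adjustment of the approximating PAM leo map.
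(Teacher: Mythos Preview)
The paper is a survey and does not itself supply a proof of Theorem~\ref{t2}; the result is simply quoted from \cite[Theorem~9]{BT}, and its open-dense strengthening (Theorem~\ref{thm:leoopendense}) from \cite{BCOT3}. So there is no in-paper argument to compare against.

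Your scheme is essentially correct and is in fact already the mechanism behind the open-dense strengthening: you show each $\mathcal L_k$ has dense \emph{interior}, not merely that it is dense. The escape lemma is the right idea --- the exact equality $f^n(U_k)=I$ is not open, but the approximate inclusion $f^N(U_k)\supset(\delta,1-\delta)$ is (via the intermediate value theorem applied to $g^N$ on points of $U_k$ mapping near $0$ and $1$), and one further iterate recovers exact surjectivity provided $h^{-1}(\{0,1\})$ sits inside that window.

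The one soft spot is the ``mild endpoint adjustment'' you invoke to force $\tilde f^{-1}(\{0,1\})\subset(0,1)$. This is not automatic, and the paper's standard tools do not deliver it: window perturbations fix the values at the endpoints of the window, and the only Lebesgue-preserving homeomorphisms of $I$ are $\mathrm{id}$ and $1-\mathrm{id}$, so neither device can move $\tilde f(0)$ off $\{0,1\}$ (the full tent map, with $T^{-1}(0)=\{0,1\}$, is a concrete obstruction). You need an explicit local surgery. For instance, if $\tilde f(x)=sx$ on a short initial piece $[0,\epsilon]$, replace that single branch by the two-branch tent $(0,s\epsilon)\to(\epsilon/2,0)\to(\epsilon,s\epsilon)$; for each $y\in(0,s\epsilon)$ the reciprocal-slope contribution over $[0,\epsilon]$ is $\tfrac{1}{2s}+\tfrac{1}{2s}=\tfrac{1}{s}$, matching the original, so the new map stays in $C_\lambda(I)$, is still PAM with slopes only larger, and now satisfies $\hat f(0)=s\epsilon\in(0,1)$. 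A symmetric fix at $x=1$ completes the adjustment; verifying that the modified map remains leo (e.g.\ via the slope-expansion argument in the proof of Lemma~\ref{PAMdense}) is routine. With this step made precise, your argument goes through.
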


Theorem \ref{t2} was recently improved in \cite{BCOT3} 
where the following result was shown:
 
 \begin{thm}\label{thm:leoopendense}
 There is an open dense set of leo maps  in $C_{\lambda}(I)$.
 \end{thm}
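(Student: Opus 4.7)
Theorem \ref{t2} already gives that the leo property is generic in $C_\lambda(I)$, so what must be strengthened is ``dense $G_\delta$'' to ``contains an open dense set,'' i.e., the set of leo maps must have dense interior. My strategy is to exhibit, around each map in a well-chosen dense family of leo maps, a $C^0$-open neighborhood in $C_\lambda(I)$ inside which every map is still leo; the union of these neighborhoods is then open and dense.

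\textbf{The dense family and expansion.} Using Lemma \ref{PAMdense} (and its proof, which produces piecewise affine Markov leo maps whose slopes have absolute value at least $4$), fix such an $f$ with Markov partition $\mathcal{P} = \{I_1, \ldots, I_k\}$ and set $\delta := \min_i \lambda(I_i)$. For $\varepsilon > 0$ small compared to $\delta$ and any $g \in C_\lambda(I)$ with $\|g - f\|_\infty < \varepsilon$, one observes: if $J$ lies in a single branch of $f$, then $g(J)$ is a connected subset of $I$, hence an interval, whose endpoints lie within $\varepsilon$ of those of $f(J)$; therefore $\lambda(g(J)) \geq 4\lambda(J) - 2\varepsilon$. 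Intervals of length $\geq 2\varepsilon$ thus triple in length, so a bounded number of $g$-iterates forces some $g^n(J)$ to cross more than one element of $\mathcal{P}$. From this point, the primitive transition matrix of $f$, robustly inherited by $g$ in the form $g(I_i) \supset I_j \setminus N_\varepsilon(\partial I_j)$ whenever $f(I_i) \supset I_j$, propagates $g^m(J)$ to all of $I$ within a bounded further number of iterates.

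\textbf{Main obstacle.} The essential difficulty is that an arbitrary nonempty open $U$ in the definition of leo may be much shorter than the threshold $2\varepsilon$, so the above expansion estimate does not apply directly. The main step is therefore to prove that every nonempty open set has some forward $g$-iterate of length at least $2\varepsilon$. I expect to combine three ingredients: (a) the measure-preservation of $g$, which precludes long-term collapse of positive-measure sets on average; (b) the automatic density of periodic points of $g$ from Remark \ref{rem:CP} (since $\lambda$ is $g$-invariant and of full support); and (c) the rigidity of $f$, namely that any $g$-orbit of length greater than a constant depending on $\mathcal{P}$ must visit an $\varepsilon$-robust copy of a slope-$\geq 4$ branch of $f$, at which point the perturbation error is dominated by the expansion. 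The refined slope bound $\geq 4$ from Lemma \ref{PAMdense} must be used sharply to absorb the $2\varepsilon$ error at branch boundaries; this is the hardest step. Once it is in place, the expansion-and-combinatorics argument above shows $g$ is leo, so the ball of radius $\varepsilon(f)$ around $f$ lies inside the set of leo maps, and the union of these balls over the dense family of admissible $f$'s is open and dense in $C_\lambda(I)$.
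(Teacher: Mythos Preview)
The paper does not actually prove Theorem~\ref{thm:leoopendense}; it is a survey and merely cites the result from \cite{BCOT3} (listed as ``in preparation''). So there is no proof in the paper to compare your attempt against, and I will assess your proposal on its own merits.

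Your overall architecture---take the dense family of piecewise affine Markov leo maps with slopes $\geq 4$ from Lemma~\ref{PAMdense} and show that a uniform ball around each such $f$ in $C_\lambda(I)$ consists of leo maps---is the natural one, and your expansion estimate $\lambda(g(J))\geq 4\lambda(J)-2\varepsilon$ for $J$ inside a single branch of $f$ is correct. But your write-up is a plan rather than a proof: you explicitly flag the ``main obstacle'' (small intervals need not grow) and then list three heuristics (a)--(c) without turning them into an argument. As written, the hardest step is missing.

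Two concrete issues. First, your item~(a) is too weak: measure preservation does not merely preclude collapse ``on average''---for \emph{every} interval $J$ and every $g\in C_\lambda(I)$ one has $\lambda(g(J))\geq\lambda(J)$, because $g^{-1}(g(J))\supset J$ and $\lambda(g^{-1}(A))=\lambda(A)$. This pointwise non-shrinking is the real engine; once you use it, the sequence $\lambda(g^n(U))$ is non-decreasing, so if it stays below $2\varepsilon$ it converges, and the increments $\lambda(g^{n+1}(U))-\lambda(g^n(U))$ tend to $0$. Second, even with non-shrinking, you must rule out the scenario in which $g^n(U)$ forever straddles a critical point of $f$ (where your $4\lambda(J)-2\varepsilon$ bound degrades to $2\lambda(J)-2\varepsilon$ and gives no growth for $|J|\leq 2\varepsilon$). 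Your item~(c) gestures at this but never explains why such persistent straddling is impossible; it requires controlling the $f$-orbit of the critical set relative to the critical set itself (and possibly choosing $f$ more carefully than Lemma~\ref{PAMdense} alone guarantees). Item~(b), density of periodic points, does not obviously help here. Until that case is handled with an actual argument, the proof has a genuine gap.
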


If we take any continuous map of $I$
with attracting fixed point in the interior of $I$ then it is clear that all sufficiently close maps in $C(I)$ cannot be transitive. For any map in $C(I)$ we can blow up an attracting to an invariant set with attracting fixed point. Thus non-transitive continuous interval maps and also continuous interval maps with $\overline{\per(f)}\neq I$ form  open dense sets.

 By the result of Blokh  \cite[Theorem 8.7]{Blokh}, for interval maps topological mixing implies periodic specification property (cf. proof of Buzzi \cite[Appendix A]{Buzzi}). Therefore, we obtain the following corollary.

\begin{cor}\label{cor:opendenseI}
	There is an open dense set of maps from $C_{\lambda}(I)$ satisfying the periodic specification property.
\end{cor}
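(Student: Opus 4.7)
The corollary is essentially an immediate chaining of results already in the excerpt, so the plan is short and the "main obstacle" is largely checking that the two quoted facts combine cleanly.

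First, the strategy is to let $\mathcal{U}\subset C_{\lambda}(I)$ be the open dense set of leo maps provided by Theorem~\ref{thm:leoopendense}, and then to argue that every $f\in\mathcal{U}$ satisfies the periodic specification property, so that $\mathcal{U}$ itself witnesses the conclusion. Since openness and denseness are already built into Theorem~\ref{thm:leoopendense}, we do not need any further perturbation arguments; the only work is to verify the implication leo $\Rightarrow$ periodic specification on a map-by-map basis.

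For a fixed $f\in\mathcal{U}$, the plan is to use the hierarchy recalled just before Theorem~\ref{t2}: leo maps are in particular topologically mixing. Then I would invoke Blokh's theorem (\cite[Theorem 8.7]{Blokh}, in the form written up in Buzzi's appendix \cite[Appendix A]{Buzzi}), which says precisely that for a continuous interval map topological mixing implies the periodic specification property. Applying this to each $f\in\mathcal{U}$ gives the desired conclusion. No Lebesgue-measure-preserving hypothesis needs to enter this last step, because Blokh's theorem is purely topological; the measure-preserving assumption is only used to obtain the open dense set of leo maps inside $C_{\lambda}(I)$ in the first place.

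The only potentially delicate point is the soundness of the citation: one should make sure that the statement of Blokh's theorem applies to all continuous interval maps (not only piecewise monotone or smooth ones) and that the form of specification used — namely periodic specification as defined in the surrounding literature — matches the one here. Since both \cite{Blokh} and \cite[Appendix A]{Buzzi} state the result for arbitrary continuous interval maps and yield approximating periodic orbits, this matching is straightforward. Hence the proof reduces to the one-line chain
\[
\textbf{leo}\ \Longrightarrow\ \textbf{topologically mixing}\ \Longrightarrow\ \textbf{periodic specification},
\]
applied uniformly on the open dense set $\mathcal{U}$ from Theorem~\ref{thm:leoopendense}.
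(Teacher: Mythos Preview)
Your proposal is correct and follows exactly the paper's approach: combine the open dense set of leo maps from Theorem~\ref{thm:leoopendense} with the implication leo $\Rightarrow$ topologically mixing, and then apply Blokh's theorem (with Buzzi's proof) that topological mixing implies periodic specification for interval maps. The paper presents this as a one-line ``Therefore'' following the statement of Blokh's result, which is precisely the chain you wrote down.
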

 
Now we turn to the the circle case, where a more detailed study has been realized recently in \cite{BCOT2}. 
 
 \begin{thm}\label{thm:leoS1}
 	There is an open dense set of maps $\mathcal{O}\subset C_{\lambda}(\mathbb{S}^1)$ such that:
 	\begin{enumerate}
 		\item 
   each map $f\in \mathcal{O}$ is leo.
 		\item
   for each pair of $\alpha,\beta\in [0,1)$ and each $f\in \mathcal{O}$ the map $T_{\alpha,\beta}(f)\in \mathcal{O}$.
 	\end{enumerate}
 \end{thm}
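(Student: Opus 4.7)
The plan is to first adapt the proof of Theorem~\ref{thm:leoopendense} to produce an open dense set $\mathcal{O}_0 \subset C_\lambda(\mathbb{S}^1)$ of leo maps, and then symmetrize it under the action $\{T_{\alpha,\beta}\}$.

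For the first step, Lemma~\ref{PAMdense} supplies a dense family $\mathcal{D} \subset C_\lambda(\mathbb{S}^1)$ of piecewise affine Markov leo maps with all slope absolute values at least $4$. Mimicking the interval case, I would show that each $f \in \mathcal{D}$ admits a $C^0$-radius $\varepsilon_f > 0$ such that every $g \in C_\lambda(\mathbb{S}^1)$ with $\|g-f\|_\infty < \varepsilon_f$ is still leo: the large-slope Markov skeleton of $f$ survives small $C^0$-perturbations as a combinatorial template on which $g$ is still sufficiently expanding in an amortized sense, while the $\lambda$-preservation of $g$ precludes any proper forward-invariant set with non-empty interior. Setting $\mathcal{O}_0 := \bigcup_{f \in \mathcal{D}} B(f,\varepsilon_f)$ then gives an open dense set of leo maps.

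For the second step, put
\[
 \mathcal{O} := \bigcup_{(\alpha,\beta)\in[0,1)^2} T_{\alpha,\beta}(\mathcal{O}_0).
\]
Since each $T_{\alpha,\beta}$ is a self-homeomorphism of $C_\lambda(\mathbb{S}^1)$ with inverse $T_{-\alpha,-\beta}$, every summand is open, so $\mathcal{O}$ is open, and $\mathcal{O} \supset \mathcal{O}_0$ is dense. The group identity $T_{\alpha',\beta'}\circ T_{\alpha,\beta}=T_{\alpha+\alpha',\beta+\beta'}$ immediately yields $T_{\alpha',\beta'}(\mathcal{O})=\mathcal{O}$, proving (2).

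The main obstacle is (1): that every $g \in \mathcal{O}$ is leo, i.e., that leo is preserved by $T_{\alpha,\beta}$. Using the identity $r_\alpha\circ f\circ r_\beta=r_{\alpha+\beta}\circ(r_\beta^{-1}\circ f\circ r_\beta)$ and the fact that topological conjugation by a rotation preserves leo, this reduces to showing $r\circ f$ is leo whenever $f$ is. My intended approach combines (i) the uniform reformulation of leo---by a compactness argument there exist $\delta>0$ and $N\in\mathbb{N}$ with $f^N(J)=\mathbb{S}^1$ for every arc $J$ with $\lambda(J)\ge\delta$---with (ii) the non-shrinking identity $\lambda(f(J))\ge\lambda(J)$, which follows from $f^{-1}(f(J))\supset J$ and $\lambda$-preservation. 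Together these force the connected iterates $g^k(U)$ to have non-decreasing measure, so eventually of measure $\ge\delta$. The technical heart is then to bootstrap from large measure to full image $\mathbb{S}^1$: my plan is to exploit the factorization $g^n = r^n\circ f_{n-1}\circ\cdots\circ f_0$ with $f_k:=r^{-k}\circ f\circ r^k$ (each leo and $\lambda$-preserving, as a rotational conjugate of $f$), which converts the question about iterates of $g$ into one about a non-autonomous composition of maps each satisfying the same uniform leo bound $(N,\delta)$, and then to transfer that bound carefully through the composition.
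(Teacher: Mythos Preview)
Your symmetrization strategy is clean, and you correctly isolate the crux: to get (1) you must show that leo is preserved under $T_{\alpha,\beta}$ on $C_\lambda(\mathbb{S}^1)$. The sketch you offer for this, however, does not close the gap---in fact the proposed factorization is circular.

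A direct telescoping of your decomposition $f_k=r^{-k}\circ f\circ r^{k}$ gives
\[
f_{n-1}\circ\cdots\circ f_0 \;=\; r^{-(n-1)}\circ (f\circ r)^{\,n-1}\circ f,
\]
and since $h:=f\circ r=r^{-1}\circ g\circ r$ is conjugate to $g$, one finds $f_{n-1}\circ\cdots\circ f_0=r^{-n}\circ g^{n}$. Thus the ``non-autonomous composition of maps each satisfying the same uniform leo bound $(N,\delta)$'' is literally $g^{n}$ up to a rotation; asking it to cover $\mathbb{S}^1$ is precisely the statement that $g$ is leo. The uniform bound you invoke is a statement about $f_k^{N}$ (i.e.\ about $f^{N}$), not about $f_{k+N-1}\circ\cdots\circ f_k$, and there is no general mechanism transferring leo from the individual factors of a non-autonomous composition to the composition itself. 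Separately, ``$\lambda(g^{k}(U))$ non-decreasing'' does not by itself force ``eventually $\ge\delta$'' (or ``eventually $=1$''): a bounded monotone sequence may well stabilize strictly below the target, and you give no growth mechanism ruling this out.

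The survey does not prove this theorem here, referring to \cite{BCOT2}. There the open dense set is not obtained by first producing an open set of leo maps and then symmetrizing; rather, rotation-invariance is built into the construction from the outset via combinatorial covering conditions on a uniform grid that are simultaneously $C^0$-open, imply leo, and are manifestly invariant under pre- and post-composition with rotations. If you want to salvage your route, you would need an honest proof that leo is $T_{\alpha,\beta}$-stable for arbitrary maps in $C_\lambda(\mathbb{S}^1)$---which is the actual content of the theorem and is not a formality---or else replace the symmetrization by a construction that is rotation-invariant by design.
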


 In fact, Blokh's results about specification property mentioned above generalize to all topologically mixing maps on topological graphs, in particular to the circle (see \cite{BlokhBM1}, cf.~\cite{BlokhSurv,AdRR}; another approach for proving the generalization of Blokh's results on topological graphs can be found in \cite{HKO} and was inspired by the techniques of Buzzi for interval maps \cite[Appendix A]{Buzzi}.

\begin{cor}\label{cor:specificationcircle}
	There is an open dense set of maps from $C_{\lambda}(\mathbb{S}^1)$ satisfying the periodic specification property.
\end{cor}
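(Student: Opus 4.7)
The plan is to derive this corollary as an immediate consequence of Theorem~\ref{thm:leoS1} together with the generalized Blokh-type result recalled in the paragraph preceding the corollary. The argument is structurally identical to the interval case (Corollary~\ref{cor:opendenseI}), with Blokh's original theorem on topological mixing implying periodic specification for interval maps replaced by its extension to topological graphs.

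First, I would invoke Theorem~\ref{thm:leoS1} to obtain an open dense set $\mathcal{O} \subset C_{\lambda}(\mathbb{S}^1)$ consisting of leo maps. By the standard hierarchy in topological dynamics recalled in Subsection~\ref{subsec:TopExp}, every leo map is in particular topologically mixing, so every $f \in \mathcal{O}$ is a topologically mixing continuous self-map of the topological graph $\mathbb{S}^1$.

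Next, I would apply the generalization of Blokh's result mentioned just before the corollary (see \cite{BlokhBM1}, cf.~\cite{BlokhSurv,AdRR,HKO}): every topologically mixing continuous map on a topological graph satisfies the periodic specification property. Since $\mathbb{S}^1$ is a topological graph, this applies to each $f \in \mathcal{O}$, and we conclude that every element of $\mathcal{O}$ has the periodic specification property.

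There is essentially no obstacle here beyond quoting the two inputs correctly; the only point that requires a brief check is that the version of Blokh's theorem used for $\mathbb{S}^1$ yields \emph{periodic} specification (not merely specification), which is indeed the statement in \cite{BlokhBM1} and is the content of the cited proofs in \cite{HKO}. Once that is in place, the open dense set $\mathcal{O}$ witnesses the corollary.
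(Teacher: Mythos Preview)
Your proposal is correct and follows exactly the paper's approach: the corollary is stated immediately after the remark that Blokh's theorem generalizes to topological graphs (in particular $\mathbb{S}^1$), and the intended argument is precisely to combine Theorem~\ref{thm:leoS1} with this generalization via the implication leo $\Rightarrow$ topologically mixing, just as you do.
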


By the same argument as for the interval maps, this result does not extend to the whole set $C(\mathbb{S}^1)$.

We turn to the case of Lebesgue measure-preserving homeomorphisms.
A map $f:X \to X$ of a compact metric space $X$ is called {\em maximally chaotic} if 
\begin{enumerate}
    \item\label{def:max:1} $f$ is topologically transitive,
    \item\label{def:max:2} the periodic points of $f$ are dense in $X$, and
    \item\label{def:max:3}  $\limsup_{k \to \infty} \text{diam}(f^k (U)) = \text{diam}(X)$ for any non-empty open set $U \subset X$.
\end{enumerate}
Notice that maximal chaos implies the well known Devaney chaos.
While the leo property is impossible for homeomorphisms, we have the following result in the setting of the $n$-dimensional cube $I^n$, which summarizes the results  found in \cite[Theorems 4.5 and D]{AP}:

\begin{thm}
For $n \ge 2$ the generic $f \in H_{\lambda}(I^n)$ is topologically weakly mixing and maximally chaotic.
\end{thm}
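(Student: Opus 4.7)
The plan is to apply the Baire category theorem in the complete metric space $(H_\lambda(I^n), d_{\mathrm{unif}})$, writing each of the four target conditions (transitivity of $f$, density of $\per(f)$, the $\limsup$-diameter condition, and transitivity of $f\times f$) as a residual subset of $H_\lambda(I^n)$ and taking the intersection. Since the intersection of finitely many residual sets is residual, the generic $f$ will enjoy all four properties.

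For the $G_\delta$ decomposition, I would fix a countable basis $\{U_i\}$ of non-empty open sets of $I^n$ and observe that transitivity equals $\bigcap_{i,j}\bigcup_{n\ge 1}\{f:f^n(U_i)\cap U_j\ne\emptyset\}$, the large-diameter condition equals $\bigcap_{i,k}\bigcup_{n\ge 1}\{f:\diam f^n(U_i)>\diam(I^n)-1/k\}$, and weak mixing equals the analogous transitivity statement for $f\times f$ against a countable basis of $I^n\times I^n$. Each innermost set is open in the uniform topology since iterates depend continuously on $f$ on compact sets. Density of $\per(f)$ is slightly more delicate but still fits the generic framework: in the homeomorphism setting it can be captured by fixed-point-index arguments on long-period orbits in each basis element $U_i$, and in any case it reduces to a dense approximation statement that we need to carry out anyway.

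For density, I would invoke the Lax-type approximation machinery of Alpern-Prasad: any $f \in H_\lambda(I^n)$ is uniformly approximated by a homeomorphism conjugate to a cyclic dyadic permutation $\pi$ of a sufficiently fine dyadic cube partition $\mathcal{D}$, composed with a small intra-cube rearrangement. The freedom in choosing $\pi$ and the intra-cube perturbation lets one arrange simultaneously that the approximating $g$ is ergodic (in line with Theorem~\ref{thm:OU}), has periodic points inserted in every cube of $\mathcal{D}$ (giving density of $\per(g)$), has long cycles that transport each $U_i$ across nearly the entire cube (giving the $\limsup$-diameter condition), and has $\pi\times\pi$ itself transitive on $\mathcal{D}\times\mathcal{D}$ (giving topological weak mixing of $g$). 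Ergodicity together with positivity of $\lambda$ on open sets yields transitivity of $g$, so all four required conditions are met by a single approximating $g$.

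The main obstacle will be the last condition in the construction: arranging $\pi\times\pi$ to be transitive on the product partition. This is a genuinely two-coordinate requirement and translates into a number-theoretic constraint on the cycle structure of $\pi$ (the cycle lengths must be chosen so that $\pi\times\pi$ admits no proper invariant union of product cubes). This is handled following the Halmos-Rohlin-Katok-Stepin approximation philosophy, adapted to the volume-preserving homeomorphism setting; the full combinatorial details, together with the verification that the perturbation stays close in $d_{\mathrm{unif}}$, are carried out in Theorems 4.5 and D of \cite{AP}.
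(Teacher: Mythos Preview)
The paper does not give its own proof of this theorem: it is stated purely as a citation, ``summarizes the results found in \cite[Theorems 4.5 and D]{AP}''. Your proposal is a reasonable outline of the Alpern--Prasad strategy and ultimately defers to the same reference for the combinatorial details, so in substance you and the paper agree.

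One technical point worth flagging in your $G_\delta$ setup: the condition ``$\per(f)$ is dense'' is not obviously $G_\delta$ in $H_\lambda(I^n)$, since the set $\{f:\per(f)\cap U_i\ne\emptyset\}$ need not be open (a non-transverse periodic orbit can disappear under arbitrarily small perturbation). You gesture at fixed-point index, which is indeed the right mechanism, but making this precise requires more than a sentence: one typically works with the open condition ``$f^k$ has a fixed point of nonzero index in $U_i$'' for some $k$, and then shows density of that stronger property. This is handled in \cite{AP}, so your final deferral covers it, but as written your sketch treats this step more lightly than it deserves.
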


It is not hard to see that conditions \eqref{def:max:1}
and \eqref{def:max:3} in the definition of maximal chaos are immediate consequence of weak mixing. Condition \eqref{def:max:2} is not in general consequence of weak mixing, since there exist weakly mixing minimal systems.

\subsection{Periodic points and dimension properties} 
The Hausdorff, lower and upper box dimension of  the graph
of a function is a way to describe the ``roughness'' of the function.
The following theorem by Schmeling and Winkler \cite{SW95} was stated for maps from $C_{\lambda}(I)$, but it holds in any dimension\footnote{Personal communication from Jörg Schmeling.}; in particular, in dimension one we have the following theorem.
\begin{thm}\label{thm:SchWin}
A graph of a generic map $f \in C_{\lambda}(M)$
has Hausdorff dimension equal to the lower box dimension, 
both equal to $1$ and the upper box dimension equal to $2$. 
\end{thm}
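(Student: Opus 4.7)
The plan is to handle the three dimensional assertions in turn; the two extremal bounds are deterministic, while the middle two require genericity arguments with parallel structure. The orthogonal projection $\Gamma_f\to M$, $(x,y)\mapsto x$, is $1$-Lipschitz and surjective, so $\dim_H(\Gamma_f)\geq \underline{\dim}_B(\Gamma_f)\geq 1$ for every $f\in C_\lambda(M)$, and the inclusion $\Gamma_f\subset M\times I$ gives $\overline{\dim}_B(\Gamma_f)\leq 2$ for every $f$. Via the standard chain $\dim_H\leq\underline{\dim}_B\leq\overline{\dim}_B$, it remains to prove that $\underline{\dim}_B(\Gamma_f)\leq 1$ and $\overline{\dim}_B(\Gamma_f)\geq 2$ on residual sets.

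For the first generic bound, let $N_\delta(A)$ count the $\delta$-grid boxes on $M\times I$ meeting $A$, and for $n,\ell\in\N$ put
\[
R_{n,\ell}:=\{f\in C_\lambda(M):\exists\,k\geq\ell \text{ with } N_{1/k}(\Gamma_f)\leq k^{\,1+1/n}\}.
\]
Then $\{f:\underline{\dim}_B(\Gamma_f)\leq 1\}=\bigcap_{n,\ell}R_{n,\ell}$, so it suffices to show each $\Int(R_{n,\ell})$ is dense. First I would observe that if $g$ is piecewise affine Markov leo then $\Gamma_g$ is a finite union of affine arcs, giving $N_{1/k}(\Gamma_g)=O_g(k)$; and whenever $\|f-g\|_\infty<1/(2k)$, a direct counting argument from the inclusion $\Gamma_f\subset B_{\sqrt 2/k}(\Gamma_g)$ gives $N_{1/k}(\Gamma_f)\leq 9\,N_{1/k}(\Gamma_g)\leq k^{\,1+1/n}$ for all $k$ large enough. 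Hence $g\in\Int(R_{n,\ell})$, and Lemma~\ref{PAMdense} makes $\Int(R_{n,\ell})$ dense. The intersection $\bigcap_{n,\ell}\Int(R_{n,\ell})$ is then a dense $G_\delta$ inside $\{\underline{\dim}_B(\Gamma_f)\leq 1\}$, and combined with the trivial lower bound this gives $\dim_H=\underline{\dim}_B=1$ generically.

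For the second generic statement, define the parallel family
\[
S_{n,\ell}:=\{f\in C_\lambda(M):\exists\,k\geq\ell \text{ with } N_{1/k}(\Gamma_f)\geq k^{\,2-1/n}\}.
\]
For density I would, given $f$ and $\eps>0$, choose an arc $J\subset M$ on which $f$ is nonconstant, a large odd integer $m$, and take $h$ to be the $m$-fold window perturbation of $f$ on $J$. Since $\|h-f\|_\infty\leq\diam f(J)$, shrinking $J$ forces $\|h-f\|_\infty<\eps$. The $m$ affinely scaled copies of $\Gamma_{f|_J}$ pack the strip $J\times f(J)$ so densely that, once $m\geq k\lambda(J)$, each $1/k$-column over $J$ meets a tooth spanning the full vertical extent $\diam f(J)$, yielding
\[
N_{1/k}(\Gamma_h)\gtrsim k\lambda(J)\cdot k\,\diam f(J)=k^{2}\,\lambda(J)\,\diam f(J).
\]
Choosing $J$ so that $\lambda(J)\,\diam f(J)$ stays bounded below by a positive constant independent of $\eps$ (localizing near a point of genuine local oscillation of $f$) puts $h\in\Int(S_{n,\ell})$ for all large $k$, and $\bigcap_{n,\ell}\Int(S_{n,\ell})$ is a dense $G_\delta$ inside $\{\overline{\dim}_B(\Gamma_f)\geq 2\}$.

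The hard part will be making the window-perturbation count in the previous paragraph rigorous: the map $h$ must simultaneously be measure-preserving, be $\eps$-close to $f$ in sup norm, and produce a near-$k^2$ box count at the target scale $1/k$, forcing $m$, $k$, $\lambda(J)$, $\diam f(J)$, and $\eps$ to satisfy competing constraints. Arranging this trade-off---and verifying that the $m$ teeth truly fill distinct grid boxes rather than merely distinct columns---is the technical core of Schmeling--Winkler's original construction.
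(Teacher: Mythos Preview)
The paper does not supply its own proof of this theorem; it simply attributes the result to Schmeling and Winkler \cite{SW95} and states it. So there is nothing in the paper to compare your argument against line by line.

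Your outline is sound and is the natural strategy: trivial bounds from projection and inclusion, then two Baire-category arguments built on the scheme $\bigcap_{n,\ell}\Int(R_{n,\ell})$ and $\bigcap_{n,\ell}\Int(S_{n,\ell})$. The lower-box-dimension half is clean: piecewise affine maps have $N_{1/k}(\Gamma_g)=O_g(k)$, and a sup-norm ball of radius $1/(2k)$ only shifts grid boxes by one row, so Lemma~\ref{PAMdense} gives density of $\Int(R_{n,\ell})$ exactly as you say.

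There is one genuine slip in the upper-box-dimension half. You write that you will choose $J$ so that $\lambda(J)\,\diam f(J)$ stays bounded below by a positive constant independent of $\eps$. This is impossible: the window perturbation satisfies $\|h-f\|_\infty\le\diam f(J)$, so to force $\|h-f\|_\infty<\eps$ you must take $\diam f(J)<\eps$, whence $\lambda(J)\,\diam f(J)<\eps$ as well. Fortunately the claim is unnecessary. What you actually need is only that $\lambda(J)\,\diam f(J)>0$ (automatic, since no $f\in C_\lambda(M)$ is constant on a nondegenerate arc), because the target inequality is $N_{1/k}(\Gamma_h)\ge k^{2-1/n}$, not $\ge ck^2$ uniformly. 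Once $J$ is fixed you simply choose $k\ge\ell$ large enough that $k^{1/n}$ beats the constant $C/(\lambda(J)\,\diam f(J))$, and then choose $m$ so that each tooth has width at most $1/k$. The same vertical-shift argument then gives the $1/(2k)$-ball around $h$ inside $S_{n,\ell}$, as you need. With this correction the sketch goes through.
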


Understanding of 
 the structure of the set of  periodic points of the function under consideration is among the fundamental tasks in dynamical systems theory. Since generic maps from 
$C_{\lambda}(I)$ are weakly mixing with respect to $\lambda$ it follows that the 
Lebesgue measure of the set of periodic points is equal to $0$. It is interesting to 
study the finer structure of the set of periodic points for generic maps, in 
particular its cardinality and dimension.

The set of 
periodic points of period $k$ for $f$ is denoted $\per(f,k)$, the set of fixed points of $f^k$ is denoted by $\fix(f,k)$ and of the union of all periodic points of $f$ is denoted by $\per(f)$.
In \cite{BCOT1} the authors studied the cardinality and structure of the set of periodic points and its respective lower box, upper box and Hausdorff dimensions:

\begin{thm}\label{t8}
	For a generic map $f \in C_{\lambda}(I)$,  for every $k \geq 1$:
	\begin{enumerate}
		\item 
  $\fix(f,k)$ is a Cantor set, 
		\item  
  $\per(f,k)$ is a relatively open dense subset of the set $\fix(f,k)$, 
		\item 
  the set $\fix(f,k)$ has lower box dimension and Hausdorff dimension  zero. In particular, $\per(f,k)$ has lower box dimension and Hausdorff dimension zero. As a consequence, the Hausdorff dimension of $\per(f)$ is also zero.
		\item  
  the set $\per(f,k)$ has upper box dimension one. Therefore, $\fix(f,k)$ has upper box dimension one as well.
	\end{enumerate}
\end{thm}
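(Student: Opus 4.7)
The plan is, for each $k \geq 1$, to exhibit a residual set $\mathcal{R}_k \subset C_\lambda(I)$ on which (1)--(4) hold, and take $\bigcap_k \mathcal{R}_k$. The two main tools are Lemma~\ref{PAMdense} and window perturbations. The crucial quantitative point is that a piecewise affine Markov leo map $g$ can be arranged with all slopes of absolute value at least $4$, so $g^k$ has slopes of absolute value at least $4^k > 1$; hence the graph of $g^k$ meets the diagonal transversely and $\fix(g,k)$ is a finite set. Moreover, leo implies topological mixing on $I$, so $\per(g,k) \ne \emptyset$ for every $k$. Nonemptiness of $\fix(f,k)$ for arbitrary $f \in C_\lambda(I)$ is automatic by the intermediate value theorem.

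For (1) and (2), closedness of $\fix(f,k)$ is automatic, and total disconnectedness is obtained by intersecting the dense open sets
\begin{equation*}
A_{k,n} := \{f \in C_\lambda(I) : \fix(f,k) \text{ contains no interval of length } \geq 1/n\}.
\end{equation*}
Openness of $A_{k,n}$ comes from a limit argument (if $f_n \to f$ uniformly and each $\fix(f_n,k)$ contains an interval of length $\geq 1/n$, a Hausdorff-limit subinterval lies in $\fix(f,k)$), and denseness from approximation by PA Markov leo maps with finite $\fix(g,k)$. For perfectness, and for the density of $\per(f,k)$ in $\fix(f,k)$ required by (2), one uses window perturbations: near a candidate $x \in \fix(f,k)$, insert an $m$-fold window perturbation built from an expanding PA leo model with transverse fixed points. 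By the intermediate value theorem applied to the $k$-th iterate of the perturbed map, each scaled copy contributes fixed points of the $k$-th iterate in its window, and a careful choice of slopes of the inserted model forces these to have exact period $k$ rather than period a proper divisor of $k$. The corresponding countable family of dense open sets yields (1) and (2).

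For (3), define
\begin{equation*}
C_{k,\ell,n} := \bigl\{f : \fix(f,k) \text{ has a finite cover by open intervals } \{I_j\} \text{ with } \textstyle\sum_j |I_j|^{1/\ell} < 1/n\bigr\}.
\end{equation*}
Openness follows because transverse fixed points of the approximating PA map persist in the given open cover under small uniform perturbations (implicit function theorem for $f^k - \mathrm{id}$). Denseness holds since, by Lemma~\ref{PAMdense}, we may approximate any $f$ by a PA Markov leo $g$ with $\fix(g,k)$ finite, then cover by arbitrarily thin intervals. Intersecting over $\ell, n$ gives $\dim_H \fix(f,k) = 0$; the $\ell = 1$ case handles the lower box dimension; and $\dim_H \per(f) = 0$ follows because $\per(f) = \bigcup_k \per(f,k)$ is a countable union of Hausdorff-zero sets. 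For (4), set
\begin{equation*}
D_{k,n,N} := \bigl\{f : \exists\, \epsilon \in (0, 1/N) \text{ with } N_\epsilon(\per(f,k)) \geq \epsilon^{-1+1/n}\bigr\},
\end{equation*}
where $N_\epsilon$ is the minimal number of $\epsilon$-intervals covering the set. Density is obtained by an $m$-fold window perturbation on an arc of fixed length $\delta$, producing $m$ scaled affine copies of a PA leo model in which each copy contains at least one point of exact period $k$ (again by careful slope choice); this gives $N_\epsilon(\per(f,k)) \geq m$ at scale $\epsilon = \delta/m$, and taking $m$ large against $\delta$ achieves $m \geq \epsilon^{-1+1/n}$. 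Openness of $D_{k,n,N}$ follows from transversality near the approximating PA maps. The intersection over $n, N$ combined with the trivial bound $\overline{\dim}_B \per(f,k) \leq 1$ gives upper box dimension one for $\per(f,k)$, which then propagates to $\fix(f,k)$ through the inclusion $\per(f,k) \subset \fix(f,k) \subset I$.

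The principal obstacle is the apparent tension between (3) and (4): one needs extremely sparse covers at most scales yet extremely dense clusters at certain scales. This is resolved by the $G_\delta$ structure of the residual set: the scales witnessing (4) depend on $f$ and may be taken arbitrarily small as one refines along the residual set, while the sparse covers of (3) work uniformly at every scale. A secondary burden is ensuring throughout the window-perturbation arguments that the newly produced fixed points of the $k$-th iterate have exact period $k$ (rather than lying in some $\fix(\cdot, j)$ for $j$ a proper divisor of $k$); this is handled by fine-tuning the slopes of the inserted PA model so that all periodic orbits of the new map of period dividing $k$ and meeting the window are accounted for.
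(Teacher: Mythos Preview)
The paper is a survey and contains no proof of this theorem; it simply quotes the result from \cite{BCOT1}. Your outline follows the natural strategy (and, as far as one can tell from the literature, the one in \cite{BCOT1}): for each item produce a countable family of dense open conditions, using Lemma~\ref{PAMdense} to manufacture the dense PA approximants and window perturbations to create or spread periodic points.

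That said, three of your justifications need repair.

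\textbf{Openness of $C_{k,\ell,n}$.} You invoke persistence of transverse fixed points via the implicit function theorem, but that is the wrong direction: what you must rule out is the appearance of \emph{new} fixed points of $g^k$ outside the chosen cover when $g$ is close to $f$. The correct argument is elementary compactness: on $K:=I\setminus\bigcup_j I_j$ one has $\min_{x\in K}|f^k(x)-x|>0$, and this positive lower bound survives a small uniform perturbation of $f$. Transversality is irrelevant here, and your $f$ need not be piecewise affine.

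\textbf{Lower box dimension in (3).} The claim that ``the $\ell=1$ case handles the lower box dimension'' is not justified. The sets $C_{k,\ell,n}$ control Hausdorff content (covers by intervals of arbitrary lengths), and intersecting over $\ell,n$ yields $\dim_H=0$, but Hausdorff dimension zero does not in general force lower box dimension zero. You need a separate family such as
\[
E_{k,s,N}:=\bigl\{f:\ \exists\,\epsilon\in(0,1/N)\ \text{with}\ N_\epsilon(\fix(f,k))\le \epsilon^{-s}\bigr\},
\]
whose openness follows from the compactness argument above and whose density follows from the same PA approximation (a finite set has bounded $N_\epsilon$, so take $\epsilon$ small enough).

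\textbf{Openness of $D_{k,n,N}$.} As written, membership in $D_{k,n,N}$ may be witnessed by non-transverse periodic points, and those can be destroyed by arbitrarily small perturbations; so $D_{k,n,N}$ is not obviously open. The fix is to build transversality into the definition (require the witnessing period-$k$ points to be transverse fixed points of $f^k$), or simply to note that the dense set you actually construct via window perturbations already has transverse witnesses and is therefore open---either route suffices for the $G_\delta$ conclusion. Your remark that exact period $k$ persists for transverse periodic points (because the orbit points $x,f(x),\dots,f^{k-1}(x)$ are distinct and stay distinct under perturbation) is correct and is exactly what is needed once transversality is in place.
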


In the setting of $C_{\lambda}(\mathbb{S}^1)$, due to the presence of rotations, degree $1$ maps need to be treated separately. Denote the set of degree $d$ maps in $C_{\lambda}(\mathbb{S}^1)$
by $C_{\lambda,d}(\mathbb{S}^1)$.

The proof of Theorem \ref{t8} shows:

\begin{thm}\label{t8'}
Conclusions of Theorem \ref{t8}
hold also for generic maps in $C_{\lambda,d}(\mathbb{S}^1)$ for each $d \in \mathbb{Z} \setminus \{1\}$.
\end{thm}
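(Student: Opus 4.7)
My strategy is to show that the proof of Theorem~\ref{t8} transplants without essential modification to each degree component $C_{\lambda,d}(\mathbb{S}^1)$ with $d\in\Z\setminus\{1\}$. The interval proof rests on two pillars: the density of piecewise affine Markov leo maps (Lemma~\ref{PAMdense}) and window perturbations. First I would verify that both pillars are compatible with fixing the degree. Since the degree is an integer-valued continuous invariant on $C(\mathbb{S}^1,\mathbb{S}^1)$, a sufficiently small uniform perturbation preserves it, upgrading Lemma~\ref{PAMdense} to the density of PA Markov leo maps inside each $C_{\lambda,d}(\mathbb{S}^1)$. Window perturbations agree with $f$ on the complement $J^c$ of a small arc $J$, in particular on $\partial J$, so they preserve the degree and act on each $C_{\lambda,d}(\mathbb{S}^1)$ to itself.

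With these two facts in hand I would mimic the interval argument. For a PA Markov leo $f\in C_{\lambda,d}(\mathbb{S}^1)$ with slopes of large absolute value, the set $\fix(f^k)$ is finite for every $k$ and consists of transverse intersections of the graph of $f^k$ with the diagonal. Around each such intersection one performs a scale-controlled window perturbation producing a nearby map whose corresponding fixed-point set has a prescribed finer structure (a small perfect set of new transverse crossings, placed so as to be dense in a controlled Cantor set of low lower box and Hausdorff dimension, while filling a set of upper box dimension $1$). A countable diagonal construction over $k\ge1$ and over a dense family of target structures then expresses each of conclusions (1)--(4) of Theorem~\ref{t8} as a countable intersection of dense open sets, and the dimension calculations -- being entirely local to the arcs of perturbation -- are identical to those in the interval setting.

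The main technical hurdle, inherited from the interval case, is the bookkeeping needed to realize all four conclusions simultaneously for every $k\ge 1$: one enumerates the countably many perturbations so that arcs at different scales and for different values of $k$ do not interfere, and then checks that the resulting $G_\delta$ sets are all dense. The restriction $d\ne 1$ is used only because $C_{\lambda,1}(\mathbb{S}^1)$ carries a rotation-theoretic obstruction -- for instance, $\fix(f^k)$ can be empty for $f$ close to an irrational rotation, and the lap number of the lift of $f^k$ need not tend to infinity -- so that case requires the genuinely different treatment of Theorem~\ref{thm:C_p}.
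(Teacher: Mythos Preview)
Your proposal is correct and matches the paper's own justification: the paper simply states that ``The proof of Theorem~\ref{t8} shows'' the result, i.e., the interval argument from \cite{BCOT1} transfers verbatim to each fixed-degree component $C_{\lambda,d}(\mathbb{S}^1)$ with $d\ne 1$. Your write-up is in fact a more detailed unpacking of precisely this transfer---observing that degree is locally constant so Lemma~\ref{PAMdense} restricts to each $C_{\lambda,d}(\mathbb{S}^1)$, that window perturbations fix the boundary of the arc and hence preserve degree, and that the exclusion $d\ne 1$ avoids the rotation obstruction handled separately in Theorem~\ref{thm:C_p}.
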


The proofs of these two theorems  can easily be adapted to show that the generic map in $C(I)$ and degree $d$ maps in $C(\mathbb{S}^1)$ (i.e., not necessarily measure preserving) have the same properties (see \cite{BCOT1}[Remark 14 and 18]). 

 For $C_{\lambda,1}(\mathbb{S}^1)$ the situation is more complicated. 
A periodic point $x \in \per(f,k) $ is called \emph{transverse}, if the graph of $f^k$ crosses the diagonal at $x$ (possibly coincides with the diagonal on an interval containing $x$).
Consider the open set
$$C_p :=\{f \in C_{\lambda,1}(\mathbb{S}^1): f \text{ has a transverse periodic point of period } p\}.$$
In this setting the proof of  Theorem  \ref{t8} yields a similar result from \cite{BCOT1}. 

\begin{thm}\label{thm:C_p}
For any $f$ in a dense $G_\delta$ subset of $\overline{C}_p$ we have for each
$k \in \N$
\begin{enumerate}
\item $\fix(f,kp)$ is a Cantor set, 
\item $\per(f,kp)$ is a relatively open dense subset of $\fix(f,kp)$, 
\item $\fix(f,kp)$ has lower box dimension and Hausdorff dimension zero. In particular, $\per(f,kp)$ has lower box dimension and Hausdorff dimension zero.  As a consequence, the Hausdorff dimension of $\per(f)$ is zero as well.
\item  the set $\per(f,kp)$ has upper box dimension $1$. Thus, $\fix(f,kp)$ and $\per(f)$ both have upper box dimension also $1$.
\end{enumerate}
\end{thm}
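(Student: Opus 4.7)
The plan is to transplant the argument proving Theorem \ref{t8} into the Baire space $\overline{C}_p$, using the fact that a transverse period-$p$ orbit persists under small perturbations. Since $C_p$ is open in $C_{\lambda,1}(\mathbb{S}^1)$ (transversality of the crossing of the graph of $f^p$ with the diagonal is a $C^0$-open condition, at least after approximating $f$ by piecewise affine representatives, and more generally follows from an index argument), $C_p$ is a dense open subset of $\overline{C}_p$. Hence for every $f$ in a dense $G_\delta$ subset of $\overline{C}_p$ we may assume throughout that $f$ carries a transverse periodic orbit $O$ of period $p$ whose persistence under $C^0$-perturbations anchors the construction.

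First I would establish a degree-$1$ analogue of Lemma \ref{PAMdense} inside $\overline{C}_p$: piecewise affine Markov leo-type maps lying in $C_p$ are dense in $\overline{C}_p$. Starting from any $f \in C_p$, the approximants are built via the same lifting procedure used in the proof of Lemma \ref{PAMdense} for $C_{\lambda}(\mathbb{S}^1)$, but with all perturbations localized on arcs disjoint from a small neighborhood of $O$; the approximant then stays in $C_p$ and remains of degree $1$. For each such approximant $g$, the set $\fix(g,kp)$ is a finite union of isolated points and closed arcs on which $g^{kp}$ coincides with the identity.

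Second, for each $k \ge 1$, define
$$U_k := \bigl\{ f \in \overline{C}_p : \fix(f,kp) \text{ is totally disconnected and } \per(f,kp) \text{ is } 1/k\text{-dense in } \fix(f,kp) \bigr\}.$$
I would check that $U_k$ is open, by semicontinuity of $f \mapsto \fix(f,kp)$ in the Hausdorff metric, and dense by the following perturbation: take a Markov leo approximant $g$ and replace the $g^{kp}$-trivial arcs by piecewise linear zig-zag window perturbations, chosen with slopes large enough to create many new transverse fixed points of $f^{kp}$ spaced at distance at most $1/k$, and supported on arcs bounded away from $O$ so that the map remains in $C_p$. A countable intersection over $k$ yields parts (1) and (2). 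Open dense sets $V_k$ guaranteeing that $\fix(f,kp)$ admits covers by few short arcs (giving lower box and Hausdorff dimension zero, hence (3)) and $W_k$ guaranteeing that $\per(f,kp)$ is $1/k$-dense in $\mathbb{S}^1$ (giving upper box dimension $1$, hence (4)) are obtained by further window perturbations on the same $g^{kp}$-trivial arcs.

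The main obstacle is the coordination of perturbations so that they simultaneously preserve Lebesgue measure, keep the degree equal to $1$, and never disturb the transverse period-$p$ orbit $O$. Window perturbations are Lebesgue-preserving by construction and, being local, do not change the degree provided their support avoids any neighborhood in which the degree is being read off; localizing the support away from $O$ also guarantees persistence of $O$. Once this bookkeeping is in place, each connected component of $\mathbb{S}^1 \setminus O$ plays the role of the interval $I$ in the proof of Theorem \ref{t8}, and the remaining combinatorial and dimension-theoretic arguments transfer essentially verbatim from \cite{BCOT1}.
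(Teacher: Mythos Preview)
Your approach is essentially what the paper does: it simply records that ``the proof of Theorem~\ref{t8} yields a similar result,'' i.e.\ the interval argument from \cite{BCOT1} transplants to the Baire space $\overline{C}_p$ once one uses the $C^0$-stability of the transverse period-$p$ orbit to stay inside $C_p$ under perturbation. Your outline makes this transplantation explicit and is in line with the paper.

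One technical slip worth fixing: your description of the sets $W_k$ as ``$\per(f,kp)$ is $1/k$-dense in $\mathbb{S}^1$'' cannot be right, since by part (1) the set $\per(f,kp)$ lies inside the nowhere dense Cantor set $\fix(f,kp)$, and in any case a single density scale does not control upper box dimension. What the argument in \cite{BCOT1} actually does is, for each pair $(k,n)$, produce a dense open set of maps for which $\per(f,kp)$ contains on the order of $n$ points that are pairwise $1/n$-separated; intersecting over $n$ then forces the upper box dimension of $\per(f,kp)$ to equal $1$. With that correction your plan matches the paper's intended proof.
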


To interpret this result  the set  $C_{\infty} := C_{\lambda,1}(\mathbb{S}^1) \setminus \cup_{p \ge 1} \overline{C_p}$ was studied in \cite{BCOT1}. It turns out that a periodic point can
be transformed to a transverse periodic point by an arbitrarily small perturbation of the map, thus
the set $C_{\infty}$  consists of maps without periodic points. Using the same argument one can see that $\cup_{p \ge 1} \overline{C_p}$ contains an open dense set. Therefore, $C_{\infty}$ is nowhere dense in $C_{\lambda,1}(\mathbb{S}^1)$. Furthermore, in \cite{BCOT1} the following complete characterization of the set $C_{\infty}$ was obtained: 

\begin{proposition}
The set $C_{\infty}$ consists of irrational circle rotations.
\end{proposition}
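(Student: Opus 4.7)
The plan is to combine the already-noted fact that every $f \in C_\infty$ has empty periodic point set with Remark~\ref{rem:CP}, upgrade the resulting conjugacy via a rigidity argument on Lebesgue-preserving circle homeomorphisms, and then verify the converse inclusion by a short compactness argument.

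For the forward inclusion, take $f \in C_\infty$. The paragraph preceding the proposition already shows $\per(f) = \emptyset$. Since $f \in C_\lambda(\mathbb{S}^1)$ preserves $\lambda$, which is a non-atomic probability measure of full support on $\mathbb{S}^1$, the last sentence of Remark~\ref{rem:CP} supplies a homeomorphism $h$ of $\mathbb{S}^1$ with $h \circ f \circ h^{-1} = r_\alpha$ for some irrational $\alpha$. Unique ergodicity of $r_\alpha$ forces the pushforward $h_*\lambda$, which is $r_\alpha$-invariant, to equal $\lambda$, so $h$ itself preserves Lebesgue measure.

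The key step is the rigidity claim that any Lebesgue-preserving homeomorphism $h$ of $\mathbb{S}^1$ is an isometry. Indeed, $h$ sends each arc to a connected set of the same Lebesgue measure, which (since $h$ is a homeomorphism of $\mathbb{S}^1$) is again an arc of the same length. Lifting to $\mathbb{R}$, this yields a monotone homeomorphism $\tilde h$ with $|\tilde h(x) - \tilde h(y)| = |x - y|$ for $|x - y| < 1$, forcing $\tilde h(x) = \pm x + c$. So $h$ is either a rotation $r_\gamma$ or a rotation composed with $x \mapsto -x$; in either case a direct computation gives $f = r_{\pm\alpha}$, an irrational rotation.

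For the reverse inclusion, fix irrational $\alpha \in [0,1)$ and suppose, toward a contradiction, that $r_\alpha \in \overline{C_p}$ for some $p \ge 1$. Pick $(f_n) \subset C_p$ with $f_n \to r_\alpha$ uniformly and fixed points $x_n$ of $f_n^p$. Since $r_\alpha$ is an isometry, a straightforward induction yields the estimate $\|f_n^p - r_{p\alpha}\|_\infty \le p\,\|f_n - r_\alpha\|_\infty$, so $f_n^p \to r_{p\alpha}$ uniformly. Extracting a subsequence with $x_n \to x$ and passing to the limit in $f_n^p(x_n) = x_n$ gives $r_{p\alpha}(x) = x$, impossible since $p\alpha \notin \mathbb{Z}$. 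Hence $r_\alpha \in C_\infty$. The main technical obstacle is the rigidity step identifying $h$ with an isometry; the rest is a routine application of Remark~\ref{rem:CP} and of uniform convergence on a compact space.
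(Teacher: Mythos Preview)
Your argument is correct. The paper, being a survey, does not supply a self-contained proof of this proposition; it cites \cite{BCOT1} and records only the preliminary observation (in the paragraph preceding the proposition) that $C_\infty$ is contained in the set of maps with $\per(f)=\emptyset$, together with the final clause of Remark~\ref{rem:CP} asserting conjugacy to an irrational rotation. Your proof fills in exactly what the survey omits: the rigidity step upgrading conjugacy to equality, and the compactness argument for the reverse inclusion.

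One simplification is available. The detour through unique ergodicity of $r_\alpha$ to show that the conjugating map $h$ preserves Lebesgue measure is unnecessary. Once Remark~\ref{rem:CP} gives $f = h^{-1} \circ r_\alpha \circ h$, you already know $f$ is a homeomorphism; since $f \in C_\lambda(\mathbb{S}^1)$ it is itself a Lebesgue-preserving homeomorphism of $\mathbb{S}^1$, and your rigidity lemma applied directly to $f$ (rather than to $h$) shows $f$ is either a rotation or a reflection composed with a rotation. The latter has fixed points, so $f$ is a rotation, and the absence of periodic points forces the angle to be irrational. This bypasses unique ergodicity entirely. Your reverse-inclusion argument is fine as written; the explicit Lipschitz estimate $\|f_n^p - r_{p\alpha}\|_\infty \le p\,\|f_n - r_\alpha\|_\infty$ is a clean way to see $f_n^p \to r_{p\alpha}$, though the weaker fact that $g_n \to g$ uniformly implies $g_n^p \to g^p$ uniformly (via uniform continuity on a compact space) would also suffice.
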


Periodic points of generic homeomorphisms have been well studied also in other contexts.
Akin et.~al. proved that the set of periodic points is a Cantor set for generic homeomorphisms of $\mathbb{S}^1$ \cite[Theorems 9.1 and 9.2(a)]{AHK}. 
On the other hand, Carvalho et.~al.~have shown that the upper box dimension of the set of periodic points  is full (i.e., of the same dimension as the dimension of the underlying manifold) for generic homeomorphisms on compact manifolds of dimension at least one  \cite{PV}\footnote{this statement only appears in the published version of \cite{PV}.}.

Let us recall that generic maps from $C_{\lambda}(I)$ will necessarily have Lebesgue measure $0$ on the set of periodic points since $\lambda$ is weakly mixing. Nonetheless the following somewhat surprising result is proven in \cite[Theorem 2]{BCOT1}.

\begin{thm}\label{t-PP}
	The set of leo maps in $C_{\lambda}(I)$ whose periodic points have full Lebesgue measure and whose periodic points of period $k$ have positive Lebesgue measure for each $k \ge 1$
	is dense in the set $C_{\lambda}(I)$.
\end{thm}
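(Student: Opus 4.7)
The plan is to reduce, via Lemma~\ref{PAMdense}, to perturbing a piecewise affine Markov leo map, and then construct the desired perturbation by implanting, on a countable disjoint family of small sub-intervals, fat Cantor sets of periodic points of each prescribed period. Given $g \in C_{\lambda}(I)$ and $\varepsilon > 0$, I first apply Lemma~\ref{PAMdense} to obtain a piecewise affine, Markov, leo map $f$ within $\varepsilon/2$ of $g$. Since such an $f$ is Lipschitz with some constant $L$, any local modification of $f$ on a sub-interval $J$ that matches $f$ at $\partial J$ and preserves Lebesgue measure onto the image $f(J)$ changes $f$ in the uniform norm by at most $L \cdot \diam J$, which is what will control the total cost of the construction.

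I then choose a countable family of pairwise disjoint sub-intervals $\{J_k\}_{k\ge 1}$, each contained in the interior of some element of the Markov partition of $f$, with $\diam J_k < \varepsilon/(2L)$ and $\sum_k \lambda(J_k) = 1$ (for example, the complementary intervals of a measure-zero Cantor set). On each $J_k$ I replace $f|_{J_k}$ by a continuous Lebesgue measure-preserving map $h|_{J_k} \colon J_k \to f(J_k)$ designed so that $h^k$ fixes a fat Cantor set $C_k \subset J_k$ of positive measure, and so that the minimal $h$-period of points of $C_k$ is exactly $k$, forcing $C_k \subset \per(h,k)$. Inside each complementary gap of $C_k$ I recursively iterate the same kind of local construction, producing a nested family of fat Cantor sets of periodic points whose Lebesgue measures form a geometric series summing to exactly $\lambda(J_k)$. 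Summing over $k$ then yields $\lambda(\per(h)) = \sum_k \lambda(J_k) = 1$ and $\lambda(\per(h,k)) \ge \lambda(C_k) > 0$ for every $k$, while by construction $d(h, f) < \varepsilon/2$ and $h \in C_{\lambda}(I)$.

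The main obstacle is compatibility with the leo property. If $h \in C_{\lambda}(I)$ is leo and $h^k|_J = \mathrm{id}$ for some open interval $J \subset I$, then $h^k(J) = J$ and the $h$-orbit of $J$ cycles through at most the $k$ sets $J, h(J), \ldots, h^{k-1}(J)$; since leo forces $h^n(J) = I$ for some $n$, one of these sets equals $I$, and then surjectivity of $h^{k-i}$ gives $J = h^k(J) = h^{k-i}(I) = I$. Hence for any leo $h$ and every $k$, $\fix(h^k)$ has empty interior, so the positive-measure period-$k$ sets demanded by the theorem cannot contain any open interval and must be genuinely fat Cantor sets. This is what makes the local construction on each $J_k$ delicate: the map $h|_{J_k}$ must (i) be continuous and Lebesgue measure-preserving onto $f(J_k)$, (ii) produce a genuine fat Cantor set $C_k$ of points of minimal $h$-period exactly $k$, and (iii) not create any open $h$-invariant set, so that the global leo property is preserved. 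Step (iii) is the hardest point; it should follow from retaining enough expansion of $h$ inside the complementary gaps of the nested Cantor decomposition, together with the leo Markov structure of $f$ outside $\bigcup_k J_k$, so that every nonempty open set has some iterate that escapes the nested Cantor sets into the expanding part of the dynamics and is thereafter spread over all of $I$.
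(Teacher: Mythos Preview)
This survey does not itself prove Theorem~\ref{t-PP}; the result is attributed to \cite[Theorem~2]{BCOT1}, so there is no in-paper argument against which to compare. I assess your outline on its own.

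The global architecture (approximate by a PAM leo map via Lemma~\ref{PAMdense}, then implant fat Cantor sets of periodic points on many small intervals) is reasonable, but the central local step has a genuine gap. You propose to modify $f$ only on a single interval $J_k$ so that the resulting map $h$ has a fat Cantor set $C_k\subset J_k$ of points of minimal $h$-period exactly $k$. For $k\ge 2$ this does not follow from a modification supported on $J_k$ alone: since $h|_{J_k^c}=f|_{J_k^c}$ and $h(J_k)=f(J_k)$ need not lie in $J_k$, the orbit of any $x\in C_k$ leaves $J_k$ after one step and is thereafter governed by the unmodified $f$, so (barring earlier returns to some $J_j$) one has $h^k(x)=f^{k-1}(h(x))$. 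Hence $h|_{C_k}$ is forced to be a selection from the inverse branches of the fixed piecewise affine map $f^{k-1}$; at the same time, keeping $h\in C_\lambda(I)$ requires $(h|_{J_k})_*(\lambda|_{J_k})=(f|_{J_k})_*(\lambda|_{J_k})$. You neither verify that a continuous $h|_{J_k}$ satisfying both constraints on a set of positive measure exists, nor place $J_k$ around an actual period-$k$ orbit of $f$ (the natural device for making orbits close up after $k$ steps, and the one suggested by the window-perturbation toolkit of this paper). The recursive ``fill the gaps'' step inherits the same problem.

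Second, you correctly isolate leo preservation as the crux but offer only a heuristic. Inserting, at every scale, positive-measure Cantor sets on which some $h^k$ acts as the identity is precisely the kind of modification that threatens leo; a proof must exhibit, for every open interval $U$, an iterate that escapes every level of the nested construction into the expanding Markov part of the dynamics. Your sentence about ``retaining enough expansion inside the complementary gaps'' states what must be shown, not how.
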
 

The following result about volume preserving homeomorphisms is proven in an unpublished sketch by
Guihéneuf \cite{PAG}.

\begin{thm}
The set of periodic points of a generic $f \in H_{\lambda}(\mathbb{M})$
for a compact manifold $\mathbb{M}$ of dimension at least two 
is a dense set of zero measure and for every $\ell \ge 1$ the set of 
fixed points of $f^{\ell}$  is either empty or perfect. 
\end{thm}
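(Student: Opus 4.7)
The plan is to establish the three conclusions—density and zero Lebesgue measure of $\per(f)$, together with the dichotomy "$\Fix(f^\ell)$ empty or perfect" for every $\ell \ge 1$—by separate Baire category arguments in $H_\lambda(\mathbb{M})$, leveraging the earlier-stated genericity of weak mixing and dense periodic points.

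First I would dispose of the two easier assertions. The density of $\per(f)$ is already in the statement of the weak mixing theorem for $H_\lambda(\mathbb{M})$ quoted above. For zero measure, weak mixing implies there are no non-constant $f$-invariant measurable functions; since $\mathbf{1}_{\Fix(f^\ell)}$ is $f$-invariant, $\lambda(\Fix(f^\ell))\in\{0,1\}$, and value $1$ would force $f^\ell=\mathrm{id}$ by continuity, contradicting transitivity. Hence $\lambda(\Fix(f^\ell))=0$ for each $\ell$, and a countable union yields $\lambda(\per(f))=0$.

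For the perfectness claim, fix $\ell\ge 1$; since $\Fix(f^\ell)$ is closed, it suffices to rule out isolated points generically. Choose a countable basis $\{U_n\}$ of $\mathbb{M}$ and, for each pair $m,n$ with $\overline{U_m}\subset U_n$ and each integer $k\ge 1$, consider
\[
\mathcal{E}_{\ell,m,n,k}:=\bigl\{f\in H_\lambda(\mathbb{M}):U_m\cap\Fix(f^\ell)\neq\emptyset\text{ and }|U_n\cap\Fix(f^\ell)|\le k\bigr\}.
\]
Any $f$ outside the countable union $\bigcup_{\ell,m,n,k}\mathcal{E}_{\ell,m,n,k}$ has, for each $\ell$, a fixed-point set of $f^\ell$ that accumulates on each of its points; hence $\Fix(f^\ell)$ is perfect whenever it is nonempty. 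The core of the proof is therefore to show that each $\mathcal{E}_{\ell,m,n,k}$ is nowhere dense. Density of the complement I would obtain from a local volume-preserving perturbation: given $f$ with a fixed point $p\in U_m\cap\Fix(f^\ell)$, exploit that $\dim\mathbb{M}\ge 2$ to insert, in a tiny neighborhood of $p$ contained in $U_n$, a measure-preserving plug (e.g.\ a small rotation supported in a ball) which either annihilates $p$ as a fixed point of $g^\ell$ or replaces it by a continuum (a circle) of $g^\ell$-fixed points, so that $g\notin\mathcal{E}_{\ell,m,n,k}$; the approximation schemes of Alpern--Prasad by dyadic cube permutations make such constructions volume-preservingly precise and $C^0$-small.

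The main obstacle is that $\mathcal{E}_{\ell,m,n,k}$ need not itself be closed, because under $C^0$ perturbation of a homeomorphism fixed points can be both created and destroyed, so the naive "no isolated fixed point" predicate is not obviously $G_\delta$. The standard remedy, which I expect is the heart of Guih\'en\'euf's sketch, is to replace $\mathcal{E}_{\ell,m,n,k}$ by its index-theoretic counterpart: use the Brouwer--Lefschetz fixed point index on isolating open sets, which is stable under $C^0$ perturbation. One then proves that the set of maps with an isolated fixed point of $f^\ell$ inside $U_m$ carrying nonzero index and with only finitely many fixed points in $U_n$ is closed, and establishes density of its complement by combining the volume-preserving bump construction above with the classical fact that an isolated fixed point of nonzero index in dimension $\ge 2$ can be deformed into a continuum of fixed points. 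This last deformation, carried out while preserving $\lambda$, is the key technical step; index-zero isolated fixed points are handled separately by showing they can be perturbed away without producing new ones.
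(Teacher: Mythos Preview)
The paper does not contain a proof of this theorem. It is a survey, and the statement is attributed to an unpublished sketch of Guih\'eneuf (reference \cite{PAG} in the paper); no argument is reproduced or even outlined. So there is nothing in the paper to compare your proposal against.

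On the merits of your outline: the first two conclusions are handled cleanly. Density of $\per(f)$ is indeed part of the weak-mixing theorem quoted just above in the survey, and your zero-measure argument is correct---$\Fix(f^\ell)$ is a closed $f$-invariant set (using that $f$ is a homeomorphism for the inclusion $f^{-1}(\Fix(f^\ell))\subset\Fix(f^\ell)$), ergodicity forces its measure to be $0$ or $1$, and full measure together with closedness and full support of $\lambda$ would give $f^\ell=\mathrm{id}$, contradicting transitivity.

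For the perfectness dichotomy your strategy is reasonable but, as you yourself flag, incomplete: the sets $\mathcal{E}_{\ell,m,n,k}$ are not closed in the $C^0$ topology, and passing to an index-theoretic substitute is exactly the kind of stabilization one needs. The sketch of the perturbation step (inserting a small volume-preserving rotation near an isolated fixed point to produce a circle of periodic points, or annihilating an index-zero one) is the right picture in dimension $\ge 2$, but the details---in particular, doing this measure-preservingly while controlling what happens to the fixed-point index along the whole orbit segment $p,f(p),\dots,f^{\ell-1}(p)$, and ensuring the resulting ``good'' set is genuinely $G_\delta$---are where the real work lies. Since the paper gives no indication of Guih\'eneuf's actual argument, one cannot say whether his route goes through index theory or through a different device (e.g.\ the dyadic-permutation machinery of Alpern--Prasad combined with a direct construction).
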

More generally Guihéneuf's result holds for homeomorphisms preserving a ``good''
measure in the sense of Oxtoby and Ulam \cite{OxUl}.

\subsection{Shadowing properties}
One of the classical notions from topological dynamics is the so-called shadowing property. It is of particular importance in systems possessing sensitive dependence on initial conditions. In such systems, very small errors could potentially lead to large divergence of orbits. Shadowing is a notion arising from computer science and is used as a tool for determining if any hypothetical orbit is indeed close to some real orbit of a topological dynamical system.
It assures that the dynamics of maps which satisfy it can be realistically observed through computer simulations.
 Let us first give definitions that are important for this subsection.
 
 For $\delta> 0$, we call a sequence $(x_n)_{n\in \N_0}\subset I$ a \emph{$\delta$-pseudo orbit} of $f\in C(I)$ if $d(f(x_n), x_{n+1})< \delta$ for every $n\in \N_0$. A \emph{periodic $\delta$-pseudo orbit} is a $\delta$-pseudo orbit for which there is $N\in\N_0$ so that $x_{n+N}=x_n$, for every $n\in \N_0$. The sequence $(x_n)_{n\in\N_0}$ is called an \emph{asymptotic pseudo orbit} if $\lim_{n\to\infty} d(f(x_n),x_{n+1})=0$.
 Provided a sequence $(x_n)_{n\in\N_0}$ is a $\delta$-pseudo orbit and an asymptotic pseudo orbit we say it is an asymptotic $\delta$-pseudo orbit.

\begin{definition}
We say that a map $f\in C(M)$ has the:
\begin{itemize}
 \item \emph{shadowing property} if for every $\eps > 0$ there exists $\delta >0$ satisfying the following: given any $\delta$-pseudo orbit $\mathbf{y}:=(y_n)_{n\in \N_0}$ we can find a corresponding point $x\in M$ that $\eps$-traces $\mathbf{y}$, i.e.,
$$d(f^n(x), y_n)<  \eps \text{ for every } n\in \N_0.$$
\item
\emph{periodic shadowing property} if for every $\eps>0$ there exists $\delta>0$ satisfying the following condition: given any periodic $\delta$-pseudo orbit $\mathbf{y}:=(y_n)_{n\in\N_0}$ we can find a corresponding periodic point $x \in M$, which $\eps$-traces $\mathbf{y}$.
\item \emph{limit shadowing property} if for every \emph{asymptotic pseudo-orbit}, i.e. sequence $(x_n)_{n\in \N_0}\subset M$, so that $$d(f(x_n),x_{n+1})\to 0 \text{ when } n\to \infty$$
there exists $p\in M$, so that
$$d(f^n(p),x_n)\to 0 \text{ as } n\to \infty.$$
\item \emph{s-limit shadowing property} if for every $\eps>0$ there exists $\delta>0$ so that
\begin{enumerate}
\item  for every $\delta$-pseudo orbit $\mathbf{y}:=(y_n)_{n\in \N_0}$ we can find a corresponding point $x\in M$ which $\eps$-traces $\mathbf{y}$,
\item  for every asymptotic $\delta$-pseudo orbit $\mathbf{y}:=(y_n)_{n\in \N_0}$ of $f$, there exists $x\in M$ which $\eps$-traces $\mathbf{y}$ and
$$ \lim_{n\to \infty}d(y_n,f^n(x)) = 0.$$
\end{enumerate}
\end{itemize}
\end{definition}

The following theorem was proved by the authors in \cite[Theorem 3]{BCOT1}.

\begin{thm}\label{t-pshadow}
	The shadowing and periodic shadowing properties are generic for maps from $C_{\lambda}(I)$.
\end{thm}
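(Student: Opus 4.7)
The plan is to realize both shadowing and periodic shadowing as countable intersections of open dense subsets of $C_{\lambda}(I)$ and conclude via the Baire Category Theorem. For $n,k \in \N$ set
\[
\mathcal{S}_{n,k} := \{f \in C_{\lambda}(I) : \text{every } (1/k)\text{-pseudo-orbit of } f \text{ is } (1/n)\text{-shadowed}\}, \qquad \mathcal{S}_{n} := \bigcup_{k \ge 1}\mathcal{S}_{n,k},
\]
so that $f$ has the shadowing property iff $f \in \bigcap_n \mathcal{S}_n$. The analogous sets $\mathcal{P}_{n,k}$, $\mathcal{P}_n$, obtained by restricting to \emph{periodic} $(1/k)$-pseudo-orbits, capture periodic shadowing. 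It therefore suffices to show that each $\mathcal{S}_n$ and each $\mathcal{P}_n$ contains an open dense subset.

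For the openness step, I would fix $f \in \mathcal{S}_{n,k}$ and let $g$ satisfy $\|g-f\|_\infty < \eta$ for $\eta$ small. Any $(1/k - \eta)$-pseudo-orbit $(y_j)$ of $g$ is automatically a $(1/k)$-pseudo-orbit of $f$, hence is $(1/n)$-shadowed by some $x \in I$ under $f$. Comparing the iterates $f^j(x)$ and $g^j(x)$ on a long-enough finite window and absorbing the loss into slightly weakened constants $(n',k')$ shows that $g \in \mathcal{S}_{n',k'}$; reindexing yields that $\mathcal{S}_n$ (and analogously $\mathcal{P}_n$) contains an open neighborhood of $f$. Because periodic pseudo-orbits form a subclass of all pseudo-orbits, the same reasoning applies to $\mathcal{P}_n$.

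For density, I would apply Lemma~\ref{PAMdense}: piecewise affine Markov leo maps are dense in $C_{\lambda}(I)$, and by inspecting the construction one may additionally insist that every slope has absolute value at least $4$. For such an $f$, the itinerary map with respect to its Markov partition furnishes a semiconjugacy from a subshift of finite type $(\Sigma,\sigma)$ onto $f$. Since $(\Sigma,\sigma)$ has both the shadowing and periodic shadowing properties, and the uniform slope expansion ensures that a $(1/k)$-pseudo-orbit of $f$ can, after a slight coarsening to Markov vertices, be lifted to a (periodic) pseudo-orbit of $(\Sigma,\sigma)$ and then projected back, both properties descend to $f$. Consequently each such $f$ lies in $\mathcal{S}_n \cap \mathcal{P}_n$ for every $n$, which proves density.

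The main obstacle is the openness step. Shadowing is an infinite-time statement with alternating quantifiers $\forall \eps \, \exists \delta$, and a small $C^0$ perturbation can a priori destroy long-term tracing while preserving short-time pseudo-orbit structure; the delicate point is to simultaneously absorb $\|g-f\|_\infty$ into the pseudo-orbit gap and the tracing bound while only invoking continuity of iterates on bounded time windows. The density argument via Lemma~\ref{PAMdense} is then comparatively routine, reducing to standard shadowing properties of subshifts of finite type.
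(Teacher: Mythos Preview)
The paper is a survey and does not prove this theorem; it merely cites \cite[Theorem~3]{BCOT1}. So there is no in-paper proof to compare against, and I evaluate your proposal on its merits and against the approach actually used in \cite{BCOT1}.

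Your density argument is essentially correct: piecewise affine, expanding Markov maps have both shadowing and periodic shadowing (via the semiconjugacy to an SFT), and Lemma~\ref{PAMdense} supplies a dense family of such maps in $C_{\lambda}(I)$.

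The gap is in the openness step, and it is fatal as written. You take $f\in\mathcal{S}_{n,k}$, a nearby $g$, and a $g$-pseudo-orbit $(y_j)$; you correctly note that $(y_j)$ is an $f$-pseudo-orbit and hence is $(1/n)$-traced by some $x$ \emph{under the iterates of $f$}. But the conclusion you need is tracing by iterates of $g$, and $g^j(x)$ can drift arbitrarily far from $f^j(x)$ as $j\to\infty$, however small $\|f-g\|_\infty$ is. Restricting to a ``long-enough finite window'' does not help: the constants you would need to absorb depend on the window length and blow up, so no fixed $(n',k')$ works for all pseudo-orbits. In short, the sets $\mathcal{S}_{n,k}$ (and their unions $\mathcal{S}_n$) are \emph{not} known to be open in $C_\lambda(I)$, and your sketch does not establish it.

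The route taken in \cite{BCOT1} (and in the related work \cite{Pi,Mizera,KoMaOpKu}) avoids this obstacle entirely. One does not try to show that ``shadowing at scale $1/n$'' is open. Instead one constructs, for each $n$, an open dense set $\mathcal{U}_n\subset C_\lambda(I)$ defined by a \emph{finite, robust combinatorial condition} --- roughly, the existence of a fine partition on which the map has a controlled covering/Markov structure that survives $C^0$-perturbation --- and then proves that any $f\in\bigcap_n\mathcal{U}_n$ has shadowing (and periodic shadowing). Openness of $\mathcal{U}_n$ is then immediate from the finiteness of the defining condition, and density comes from the same PAM approximation you use. The missing idea in your proposal is precisely this intermediate layer: replace the infinite-time tracing condition by a finite structural one that is $C^0$-open, and only \emph{afterwards} deduce shadowing from the intersection.
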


 For comparison, in the larger space $C(M)$ Mizera proved in \cite{Mizera} that the shadowing property is  generic.  
 Several other results that shadowing is generic in topology of uniform convergence in more general settings were established  (see \cite{Med, KoMaOpKu}) using the techniques of  Pilyugin and Plamenevskaya \cite{Pi} initially developed for proving genericity of shadowing property of homeomorphisms on any smooth compact manifolds without a boundary.

\begin{proposition}\label{p:1}
	The set of maps $f\in C_{\lambda}(I)$ that have s-limit shadowing property is dense in the set $C_{\lambda}(I)$.
\end{proposition}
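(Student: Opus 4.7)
The plan is to combine the density of piecewise affine Markov leo maps (Lemma~\ref{PAMdense}) with the fact that every such map automatically has s-limit shadowing. Given $f\in C_\lambda(I)$ and $\eta>0$, Lemma~\ref{PAMdense} furnishes a piecewise affine Markov leo $g\in C_\lambda(I)$ with $\|f-g\|_\infty<\eta$; by inspection of its construction we may further insist every slope of $g$ has absolute value at least $4$, so that $g$ is uniformly expanding with factor $\lambda_0\geq 4$.

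\textbf{Verifying s-limit shadowing for $g$.} Let $\mathcal{P}=\{J_1,\dots,J_n\}$ be the Markov partition. The Markov-plus-expanding structure yields the usual coding: for every admissible sequence $(i_k)_{k\geq 0}$ (meaning $J_{i_{k+1}}\subset g(J_{i_k})$), the nested set $\bigcap_{k\geq 0} g^{-k}(\overline{J_{i_k}})$ is a single point, with the $k$-th partial intersection an interval of length at most $\lambda_0^{-k}\cdot\diam(I)$. Given $\eps>0$, I would choose $\delta$ much smaller than $\min_i|J_i|$ and $\eps$; then for any $\delta$-pseudo orbit $(y_n)$, selecting any $J_{i_n}$ with $y_n\in J_{i_n}$ produces an admissible coding (the pseudo-orbit bound forbids ``skipping over'' a Markov piece), and the associated tracking point $\eps$-shadows $(y_n)$. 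If in addition $(y_n)$ is asymptotic, then $d(g(y_n),y_{n+1})\to 0$ allows the symbolic tail to be refined into iterated $g^{-k}$-preimages of ever-shorter partition elements, and exponential contraction yields $d(g^n(x),y_n)\to 0$ by the same tracking point.

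\textbf{Main obstacle.} The subtle technical points are the coding ambiguity when $y_n$ lands near a boundary of $\mathcal{P}$, and the need to produce a single $x$ satisfying both the $\eps$-tracing and the asymptotic-tracing clauses of s-limit shadowing simultaneously. A clean way around both is to interpret the preceding as exhibiting $g$ as a finite-to-one factor of the subshift of finite type $\Sigma_A$ on the transition graph of $\mathcal{P}$; s-limit shadowing is classical on SFTs (via the natural cylinder-based construction, with the asymptotic improvement automatic because past coordinates are contracted exponentially), and it descends along such finite-to-one factor maps because the constant $\delta$ can be taken small enough that the chosen coding is unique up to the uniformly bounded fibres. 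Combined with the approximation step, this gives the required density of s-limit shadowing in $C_\lambda(I)$.
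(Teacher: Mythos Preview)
This survey does not include a proof of Proposition~\ref{p:1}; the result is merely quoted from \cite{BCOT1}. So there is no in-paper argument to compare against, and I can only assess your proposal on its own merits and against the natural expectations set by Lemma~\ref{PAMdense}.

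Your overall strategy---approximate by a piecewise affine Markov leo map with slopes of absolute value at least $4$, then argue that such a map has s-limit shadowing---is the right one, and it is almost certainly what is carried out in \cite{BCOT1}. The direct part of your second paragraph (nested pull-backs along monotone branches, exponential contraction of cylinder lengths) is exactly the mechanism that makes shadowing work for piecewise expanding Markov maps, and extending this to the asymptotic clause is indeed a matter of observing that once $d(g(y_n),y_{n+1})$ falls below any prescribed threshold, the cylinder diameters controlling the tracking error shrink accordingly.

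The weak point is your ``clean way around'' via the SFT factor. Two issues. First, the claim that choosing any $J_{i_n}\ni y_n$ gives an admissible word is not correct as stated: if $g(y_n)$ sits near the boundary of $g(J_{i_n})$, then $y_{n+1}$ can lie in an adjacent partition element \emph{not} contained in $g(J_{i_n})$. You must instead code by $J_{i_{n+1}}\ni g(y_n)$ (so $J_{i_{n+1}}\subset g(J_{i_n})$ is automatic by the Markov property) and accept that $d(y_{n+1},J_{i_{n+1}})\le\delta$; this is harmless for the $\eps$-estimate but needs to be said. Second, s-limit shadowing does not in general descend along finite-to-one factor maps, and your one-sentence justification (``$\delta$ small enough that the chosen coding is unique up to the uniformly bounded fibres'') does not supply the missing argument: the difficulty is lifting the pseudo orbit to $\Sigma_A$, and $\pi^{-1}$ has no continuous section. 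It is cleaner to discard the SFT detour and finish the argument directly on $I$: the contraction of the nested branch-preimages $\bigcap_{k\le n} g_{i_0}^{-1}\cdots g_{i_{k-1}}^{-1}(\overline{J_{i_k}})$ already gives a unique shadowing point $x$, and for an asymptotic $\delta$-pseudo orbit the same estimate applied to the tail $(y_n)_{n\ge N}$ (which is a $\delta_N$-pseudo orbit with $\delta_N\to 0$) shows that $g^N(x)$ $C\delta_N$-traces that tail, whence $d(g^n(x),y_n)\to 0$. With these two repairs your proof goes through.
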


The main result in \cite{BCOT} for the setting $C_{\lambda}(\mathbb{S}^1)$ is even stronger than the preceding two statements. 

\begin{thm}\label{thm:main}
	The s-limit shadowing property is generic in $C_{\lambda}(\mathbb{S}^1)$.
\end{thm}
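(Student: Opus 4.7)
The plan is to prove Theorem \ref{thm:main} by establishing separately that the set $\mathcal{SLS}$ of $f \in C_\lambda(\mathbb{S}^1)$ with the s-limit shadowing property is dense in $C_\lambda(\mathbb{S}^1)$ and is a $G_\delta$ subset. For the $G_\delta$ part, I would decompose the property into its two clauses---ordinary $\varepsilon$-shadowing of $\delta$-pseudo orbits and asymptotic $\varepsilon$-shadowing of asymptotic $\delta$-pseudo orbits---and for each $n \ge 1$ consider the set
\[
\mathcal{W}_n := \bigl\{ f \in C_\lambda(\mathbb{S}^1) : \exists\, \delta > 0 \text{ witnessing s-limit shadowing with } \varepsilon = 1/n \bigr\},
\]
so that $\mathcal{SLS} = \bigcap_{n \ge 1}\mathcal{W}_n$; the task then reduces to proving that each $\mathcal{W}_n$ is open and dense.

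For the density step I would invoke Lemma \ref{PAMdense} and approximate an arbitrary $f \in C_\lambda(\mathbb{S}^1)$ by a piecewise affine Markov leo map $g$. Since $g$ is Markov, it factors through a subshift of finite type $\Sigma_A$ via a natural factor map $\pi \colon \Sigma_A \to \mathbb{S}^1$, and SFTs are classically known to have the s-limit shadowing property. Because the slopes of $g$ can be chosen uniformly greater than $1$ in absolute value (as in the proof of Lemma \ref{PAMdense}), the coding is uniformly expansive, so pseudo-orbits of $g$ lift to admissible sequences in $\Sigma_A$, asymptotic pseudo-orbits lift to asymptotically admissible ones, and shadowing in $\Sigma_A$ projects down to s-limit shadowing on $\mathbb{S}^1$.

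For openness of $\mathcal{W}_n$, the starting point is the standard transfer of ordinary shadowing: if $f \in \mathcal{W}_n$ with witness $\delta$, and $\|f - g\|_\infty$ is sufficiently small, then any $\delta/2$-pseudo orbit of $g$ is a $\delta$-pseudo orbit of $f$ and hence $1/(2n)$-traced by an $f$-orbit, which in turn approximates a $g$-orbit up to uniform error. To upgrade this to the asymptotic clause, I would restrict attention to the open dense subset of leo maps from Theorem \ref{thm:leoS1}, on which Corollary \ref{cor:specificationcircle} supplies the periodic specification property. Specification can then be used to diagonally correct a candidate shadowing orbit at successively larger time scales, so that asymptotic tracing of pseudo-orbits of $f$ survives passage to nearby $g$ within the leo open set.

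The main obstacle I expect is this asymptotic upgrade, which is precisely what prevents a straightforward circle analogue of Proposition \ref{p:1} from being promoted to genericity. In $C_\lambda(I)$ the two endpoints constrain the localized window perturbations used in the diagonal refinement, leaving no slack to repair asymptotic tracing after a $C^0$ perturbation. In $C_\lambda(\mathbb{S}^1)$, by contrast, the absence of a boundary together with the rotational symmetry exploited in Theorems \ref{thm:3} and \ref{thm:leoS1} permits specification-based corrections of uniform quality at every point of the circle, which I expect to be the key technical ingredient making each $\mathcal{W}_n$ open and thereby yielding the full genericity conclusion.
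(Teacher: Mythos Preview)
Your density argument via piecewise affine Markov leo maps and SFT coding is reasonable and in line with how one produces a dense set with s-limit shadowing. The genuine gap is in the openness of your sets $\mathcal{W}_n$. The transfer of the asymptotic clause fails at the very first step: if $\|f-g\|_\infty=\eta>0$ and $(y_n)$ is an asymptotic $\delta$-pseudo orbit for $g$, then for $f$ one only gets $\limsup_n d(f(y_n),y_{n+1})\le\eta$, not $0$, so $(y_n)$ is \emph{not} an asymptotic pseudo orbit of $f$ and you cannot invoke the s-limit shadowing of $f$ at all. Your proposed repair via periodic specification does not close this hole: specification provides an orbit of $g$ that $\varepsilon$-traces arbitrarily long finite blocks, but gluing infinitely many such blocks yields a point whose distance to $(y_n)$ is controlled only up to the fixed specification error, not one that converges to $0$. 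Nothing in the leo/specification toolkit produces a single $g$-orbit that asymptotically traces a given sequence, and without that the set $\mathcal{W}_n$ has no reason to be open. Indeed, it is not known (and is likely false in general) that the s-limit shadowing locus itself is $G_\delta$.

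The proof in \cite{BCOT}, to which the present survey defers, does not attempt to show that $\mathcal{SLS}$ is $G_\delta$. Instead it follows the Pilyugin--Plamenevskaya scheme: one builds an explicit dense $G_\delta$ set $\mathcal{G}\subset C_\lambda(\mathbb{S}^1)$ by imposing, for each $n$, an \emph{open} combinatorial covering condition (the map sends elements of a fine partition across neighbouring elements in a controlled Markov-like way), and then proves directly that every $f\in\mathcal{G}$ has s-limit shadowing. The absence of boundary on $\mathbb{S}^1$ is used at this second, deterministic step --- it guarantees that the nested covering relations can be chained indefinitely without getting trapped near an endpoint, which is exactly what fails on $I$ and explains why only density (Proposition~\ref{p:1}) is known there. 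So the circle-specific ingredient enters not as an openness-of-the-property argument, but as a structural feature of the auxiliary $G_\delta$ set on which s-limit shadowing is then verified map by map.
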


\begin{cor}\label{cor:shadow}
	The limit shadowing, periodic shadowing and shadowing property are generic in $C_{\lambda}(\mathbb{S}^1)$.
\end{cor}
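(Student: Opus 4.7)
My plan is to deduce the three shadowing notions from the s-limit shadowing property of Theorem~\ref{thm:main}, combined with the periodic specification property of Corollary~\ref{cor:specificationcircle} where needed. Let $\mathcal{R}\subset C_\lambda(\mathbb{S}^1)$ be the residual set of s-limit shadowing maps given by Theorem~\ref{thm:main}, and let $\mathcal{S}$ be the open dense set of maps with periodic specification. Then $\mathcal{R}\cap\mathcal{S}$ is residual in $C_\lambda(\mathbb{S}^1)$, and I will argue that every map in it enjoys all three properties simultaneously. A preliminary observation used below is that every $f\in C_\lambda(\mathbb{S}^1)$ is surjective: $f(\mathbb{S}^1)$ is closed and has full Lebesgue measure, hence equals $\mathbb{S}^1$.

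The shadowing property is automatic, since clause (1) in the definition of s-limit shadowing is verbatim the shadowing property. For the limit shadowing property, let $f\in\mathcal{R}$ and let $(x_n)_{n\geq 0}$ be an asymptotic pseudo-orbit. Given $\eps>0$, let $\delta>0$ be the s-limit shadowing constant corresponding to $\eps$. Because $d(f(x_n),x_{n+1})\to 0$, the tail $(x_{N+n})_{n\geq 0}$ is an asymptotic $\delta$-pseudo orbit for some sufficiently large $N$. S-limit shadowing applied to this tail produces $q\in\mathbb{S}^1$ with $d(f^n(q),x_{N+n})\to 0$. Using surjectivity of $f$, pick $p\in f^{-N}(q)$; then $d(f^n(p),x_n)\to 0$, which is exactly the limit shadowing conclusion.

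For periodic shadowing, I appeal to $\mathcal{S}$. The standard derivation of periodic shadowing from periodic specification can be applied: given $\eps>0$, one combines the ordinary shadowing constant for $\eps/2$ (available because $\mathcal{R}\cap\mathcal{S}$ consists of shadowing maps) with the specification gap-length $M(\eps/2)$ to obtain a threshold $\delta$ with the property that any periodic $\delta$-pseudo orbit of period $N$ is first $(\eps/2)$-traced by an orbit segment of length $N$ that almost closes up, and then specification supplies a genuine periodic point whose orbit $\eps$-traces the pseudo-orbit. Hence every $f\in\mathcal{R}\cap\mathcal{S}$ has periodic shadowing.

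I expect the main bookkeeping to sit in this last step: fitting $\eps$, $\delta$, and $M(\eps/2)$ together so that the periodic point produced by specification traces the entire periodic pseudo-orbit (not just its fundamental domain) within $\eps$, and handling the circle topology when closing up the near-periodic segment. The rest of the argument is routine. Once the bookkeeping is done, $\mathcal{R}\cap\mathcal{S}$ is the residual set on which the shadowing, limit shadowing, and periodic shadowing properties all hold.
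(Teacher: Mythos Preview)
Your derivations of shadowing and limit shadowing from s-limit shadowing are correct and standard: clause~(1) of the definition is the shadowing property verbatim, and for limit shadowing the tail-plus-preimage argument (using that every $f\in C_\lambda(\mathbb{S}^1)$ is surjective, since $f(\mathbb{S}^1)$ is closed of full measure) is the usual route. For these two properties your argument matches the implicit reasoning behind the paper's unproved corollary.

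The periodic shadowing step, however, contains a genuine gap rather than mere bookkeeping. When you apply periodic specification with parameter $\eps/2$ and gap constant $M=M(\eps/2)$ to an orbit segment $z,f(z),\dots,f^{L}(z)$ of the shadowing point and request a periodic point $p$ of period $P$ (a multiple of the pseudo-orbit period $N$), the specification inequality $P>L+M$ forces at least $M$ consecutive times $j\in[0,P-1]$ at which $d(f^j(p),f^j(z))$ is uncontrolled. Because $p$ must $\eps$-trace the $N$-periodic pseudo-orbit for \emph{all} $n$, and both sequences are $P$-periodic, you must control every $j\in[0,P-1]$, including this window. Closing it by forward uniform continuity would require the specification error to be some $\eta\ll\eps$ small enough that $\eta$-close points remain $\eps/2$-close for $M(\eta)$ iterates; but $M(\eta)$ grows as $\eta\to 0$, and your sketch does not break this circular dependence between the specification constant and the continuity modulus. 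Without an additional idea the argument does not close, and the implication ``shadowing $+$ specification $\Rightarrow$ periodic shadowing'' is not established by the outline you give.

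The survey itself offers no proof of the corollary; it simply records the conclusion of \cite{BCOT}, where the shadowing-type properties for $C_\lambda(\mathbb{S}^1)$ are obtained directly by the same perturbation scheme that yields s-limit shadowing, rather than by deducing periodic shadowing from specification after the fact. So your route for periodic shadowing is genuinely different from the paper's intended one, and as written it is incomplete.
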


Actually, Theorem~\ref{thm:main} is somewhat surprising since $C_{\lambda}(\mathbb{S}^1)$ is the first environment in which s-limit shadowing was proven to be generic. Up to now, only denseness of s-limit shadowing was established in the setting
of compact topological manifolds
\cite{MazOpr}. Theorem \ref{thm:main} also holds in the setting of  $C(\mathbb{S}^1)$. However, the methods used in \cite{BCOT} do not work in the setting of $C_{\lambda}(I)$. This motivates the following.

\begin{question}
Is s-limit shadowing generic in $C_{\lambda}(I)$?
\end{question}

 Possible positive answer to the above question will require some new techniques than the ones used in \cite{BCOT}.
On the other hand, a standard technique to disprove that a condition is generic is to find an open set without the required property. Such approach is again impossible, because of Proposition~\ref{p:1}. In the view of Theorem~\ref{thm:main} and the result from \cite{GuLe18} it is also natural to ask the following question.

\begin{question}
Is s-limit shadowing generic also for the volume preserving homeomorphisms on manifolds of dimension greater than $1$?
\end{question}

 In the context of volume preserving homeomorphisms on manifolds of dimension at least two (with or without boundary), the genericity of shadowing was recently proven by Guihéneuf and Lefeuvre \cite{GuLe18}.

\subsection{Knot points}\label{subsec:knotpoint} 
We define the upper, lower, left and right \textit{Dini derivatives}
of $f$ at $x$:
\begin{align*}
D ^{+}f(x)& :=  \limsup_{\scriptstyle t \to x^{+}}{\frac {f(t)-f(x)}{t-x}} \qquad
D _{+}f(x) :=  \liminf_{\scriptstyle t\to x^{+}}{\frac {f(t)-f(x)}{t-x}}\\
D ^{-}f(x)& :=  \limsup_{\scriptstyle t\to x^{-}}{\frac {f(t)-f(x)}{t-x}} \qquad
D _{-}f(x) :=  \liminf_{\scriptstyle t\to x^{-}}{\frac {f(t)-f(x)}{t-x}}.
\end{align*}

We call a point $x\in M$ a {\em knot point} of function $f\in C(M)$ if suprema and infima of the right and left derivatives at point $x$ satisfy $D^{+}f(x) = D^{-}f(x) = \infty$ and $D_{+}f(x) = D_{-}f(x) = -\infty$. The following theorem states a consequence of a more general result proved in \cite{B} for the interval, the circle case can be treated analogously.

\begin{thm}\label{t:1}~The $C_{\lambda}(M)$-generic function has a knot point at $\lambda$-almost every point.
\end{thm}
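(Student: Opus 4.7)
The plan is to adapt the Banach--Mazurkiewicz Baire argument for generic nowhere-differentiability of continuous functions, replacing arbitrary perturbations by window perturbations so as to stay inside $C_\lambda(M)$. Unwinding the Dini-derivative definitions, $x\in M$ fails to be a knot point of $f$ if and only if for some $N,n\in\N$ and $j\in\{1,2,3,4\}$, $x$ lies in one of the four closed-inequality sets
\begin{align*}
K^{(1)}_{N,n}(f) &:= \{x: f(t)-f(x)\le N(t-x)\ \forall t\in(x,x+1/n)\cap M\},\\
K^{(2)}_{N,n}(f) &:= \{x: f(t)-f(x)\ge -N(t-x)\ \forall t\in(x,x+1/n)\cap M\},
\end{align*}
and the two analogous sets $K^{(3)}_{N,n},K^{(4)}_{N,n}$ defined with respect to the left-hand interval $(x-1/n,x)\cap M$. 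Consequently it suffices to show that
$$\mathcal{B}^{(j)}_{N,n,k}:=\{f\in C_\lambda(M):\lambda(K^{(j)}_{N,n}(f))\ge 1/k\}$$
is closed and nowhere dense for each quadruple $(j,N,n,k)$; Baire then produces a residual set of $f\in C_\lambda(M)$ for which $\lambda$-a.e.\ $x$ is a knot point.

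Closedness of $\mathcal{B}^{(j)}_{N,n,k}$ is routine: if $f_i\to f$ uniformly and $x\in K^{(j)}_{N,n}(f_i)$ infinitely often, passing to the limit in the non-strict defining inequalities gives $x\in K^{(j)}_{N,n}(f)$, so $\limsup_i K^{(j)}_{N,n}(f_i)\subset K^{(j)}_{N,n}(f)$, and the inequality $\limsup_i\lambda(A_i)\le \lambda(\limsup_i A_i)$ in the finite measure space $(M,\lambda)$ yields the claim. For density of the complement, given $f\in C_\lambda(M)$ and $\eta>0$, apply Lemma~\ref{PAMdense} to approximate $f$ within $\eta/2$ by a piecewise affine Markov map $g_1$ all of whose slopes have absolute value at least $4$. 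Subdivide the PL structure of $g_1$ so that the oscillation of $g_1$ on each linearity piece is less than $\eta/2$, and on each piece perform an $m$-fold window perturbation for a common large odd $m$, producing $g$. This $g$ lies in $C_\lambda(M)$ because on each piece the $m$ affinely scaled copies each carry $1/m$ of the original preimage measure, and $\|g-g_1\|_\infty<\eta/2$ because on every piece $g$ and $g_1$ take values in the same range.

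The crucial measure estimate is as follows. On each piece $J=[a,b]$ the perturbed map $g$ is a zigzag alternating between $g_1(a)$ and $g_1(b)$, consisting of $m$ monotone sub-pieces of length $|J|/m$ and slope $\pm m s_0$. For $m\ge 2n\cdot\max_J|J|$, the nearest peak of $g$ to the right of any $x\in J$ lies within $(x,x+1/n)$; at that peak the difference quotient exceeds $N$ unless $g(x)$ is within $2N|J|/m$ of the peak value. Hence the part of $K^{(1)}_{N,n}(g)$ inside $J$ has measure at most $2N|J|/(m\cdot s_{\min})$, and summing over pieces yields $\lambda(K^{(1)}_{N,n}(g))\le 2N/(m\cdot s_{\min})$ with $s_{\min}\ge 4$ the minimum slope magnitude of $g_1$. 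Symmetric arguments (using valleys on the right, and peaks/valleys on the left) give the same bound for $K^{(2)}_{N,n}(g),K^{(3)}_{N,n}(g),K^{(4)}_{N,n}(g)$, and taking $m$ large enough makes all four bounds $<1/k$ simultaneously. The main obstacle is precisely this simultaneous four-fold estimate --- all four Dini conditions must be witnessed at a.e.\ point by the same perturbation, which is why one cannot settle for a piecewise monotone approximation and must instead introduce fine oscillating zigzag structure so that arbitrarily steep positive and negative difference quotients appear within any one-sided window of almost every point. Once the estimate is in hand, the Baire category theorem produces the desired residual set; the circle case is identical once one uses the $\mathbb{S}^1$ version of Lemma~\ref{PAMdense}.
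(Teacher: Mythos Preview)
The survey does not supply its own proof of Theorem~\ref{t:1}; it simply cites \cite{B} for the interval case and remarks that the circle case is analogous. Your Banach--Mazurkiewicz-style Baire argument, with window perturbations replacing arbitrary $C(M)$-perturbations so as to remain in $C_\lambda(M)$, is precisely in the spirit of \cite{B} (cf.\ the sentence preceding Lemma~\ref{PAMdense}, which traces window perturbations back to that paper), so your argument is essentially a reconstruction of the original one and is correct.

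One minor point worth patching: your assertion that ``the nearest peak of $g$ to the right of any $x\in J$ lies within $(x,x+1/n)$'' is not literally true when $g_1$ is decreasing on $J$ and $x$ lies in the rightmost sub-piece of $J$ --- there is then no peak of $g|_J$ to the right of $x$, and the first local maxima in the adjacent piece need not have value exceeding $g(x)$. Symmetric exceptions occur for valleys and for left-hand neighbourhoods, and (in the interval case) near the endpoints of $I$. However, these exceptional points have total measure at most $\sum_J |J|/m = 1/m$ for each of the four Dini conditions, so the estimate becomes $\lambda(K^{(j)}_{N,n}(g))\le 2N/(m\,s_{\min}) + O(1/m)$, still $<1/k$ for $m$ large, and the argument goes through unchanged.
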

The next result generalizes a classical result of Saks
\cite{Sa32} saying that the set of Besicovitch functions is a meager set in $C(I)$.  Its circle version follows from the fact that a monotonicity result can be applied separately on arcs partitioning the circle.  

A {\it Besicovitch function}  $f\in C(M)$ is a map such that for
every $x\in M$, no unilateral finite or infinite derivative exists at $x$.

\begin{cor}
The set of Besicovitch functions is a meager set in  $C_{\lambda}(M)$.
\end{cor}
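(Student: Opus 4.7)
The strategy is to import the classical Saks decomposition of Besicovitch functions in $C(I)$ into our smaller space via Lemma~\ref{PAMdense}, and to reduce the circle case to the interval case by covering $\mathbb{S}^1$ with two overlapping arcs. Saks' classical proof exhibits the set $B$ of Besicovitch functions in $C(I)$ as a countable union $B \subseteq \bigcup_{n,m \in \mathbb{N}} B_{n,m}$ of closed sets, where each $B_{n,m}$ encodes a uniform oscillation condition on the difference quotients $(f(y)-f(x))/(y-x)$ at all points $x$ and all scales $h \leq 1/m$, quantified by $n$. Closedness in $C(I)$ is preserved when restricted to the closed subspace $C_{\lambda}(I)$. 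To conclude that $B \cap C_{\lambda}(I)$ is meager in $C_{\lambda}(I)$, it suffices to verify that each $B_{n,m} \cap C_{\lambda}(I)$ is nowhere dense in $C_{\lambda}(I)$.

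Here Lemma~\ref{PAMdense} is the decisive input. Given $f \in C_{\lambda}(I)$ and $\varepsilon > 0$, I approximate $f$ uniformly within $\varepsilon$ by a piecewise affine Markov map $\tilde f \in C_{\lambda}(I)$. Such $\tilde f$ has finitely many breakpoints and finite local slopes; at every non-breakpoint $x$, both one-sided derivatives of $\tilde f$ exist and equal the local slope, so the oscillation of difference quotients at $x$ tends to zero as the scale shrinks, violating the oscillation condition defining $B_{n,m}$ at that single $x$. Since $B_{n,m}$ demands its condition hold at \emph{every} point of $I$, we conclude $\tilde f \notin B_{n,m}$. Hence the complement of $B_{n,m}$ is dense in $C_{\lambda}(I)$, so $B_{n,m} \cap C_{\lambda}(I)$ is nowhere dense, and the union $B \cap C_{\lambda}(I) \subseteq \bigcup_{n,m} (B_{n,m} \cap C_{\lambda}(I))$ is meager.

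For the circle case $M = \mathbb{S}^1$, fix two closed arcs $A_1, A_2 \subset \mathbb{S}^1$, each homeomorphic to a closed interval and with interiors covering $\mathbb{S}^1$. Since the Besicovitch property is purely pointwise and every $x \in \mathbb{S}^1$ is interior to some $A_i$, $f$ is Besicovitch iff no one-sided derivative of $f$ exists at any interior point of either $A_i$. Applying the interval argument to each arc---this is the ``monotonicity result applied to arcs'' flagged in the paper---yields dense $G_{\delta}$ sets $G_i \subset C_{\lambda}(\mathbb{S}^1)$ of maps admitting a one-sided derivative at some interior point of $A_i$; the union $G_1 \cup G_2$ is a dense $G_{\delta}$ in $C_{\lambda}(\mathbb{S}^1)$ disjoint from the Besicovitch set.

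The main obstacle is the classical Saks decomposition of $B$ as $\bigcup B_{n,m}$ together with the verification that $B$ is indeed contained in this union of oscillation-defined closed sets; this is the technical heart of \cite{Sa32} and is imported wholesale. The measure-preserving constraint is then handled cleanly by Lemma~\ref{PAMdense}, provided one can verify that the piecewise affine Markov approximants violate the defining oscillation condition at \emph{some} point---and this is immediate at any non-breakpoint, where finite one-sided derivatives exist. The whole proof thus reduces to the classical theorem plus a direct invocation of Lemma~\ref{PAMdense} and, in the circle case, a covering by two arcs.
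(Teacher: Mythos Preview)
Your route differs substantially from the paper's. The paper does not dissect Saks' 1932 meagerness argument at all; instead it leverages Theorem~\ref{t:1} (a generic $f \in C_{\lambda}(M)$ has knot points $\lambda$-a.e.) together with a classical monotonicity criterion: if $D^+f \ge 0$ a.e.\ on an arc $A$ and $D^+f > -\infty$ everywhere on $A$, then $f$ is non-decreasing on $A$. For $f$ in the residual knot-point set one has $D^+f = +\infty$ a.e.\ on any arc, yet $f$ cannot be non-decreasing there (knot points force arbitrarily negative right difference quotients); by contraposition some $x_0 \in A$ has $D^+f(x_0) = -\infty$, so the right derivative exists and equals $-\infty$ at $x_0$, and $f$ is not Besicovitch. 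The circle case reduces to arcs in exactly the same way.

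There is a genuine gap in your argument. You assert that Saks supplies closed sets $B_{n,m}$ each defined by a pointwise oscillation requirement (``for every $x$, \ldots''), so that any map possessing a one-sided derivative at even a single point falls outside every $B_{n,m}$. Saks' 1932 proof does not proceed this way. What Saks establishes is, in effect, that the larger set $A = \{f \in C(I) : D^+f(x) > -\infty \text{ for all } x\}$ is meager, and then uses $B \subseteq A$. Since $A$ contains every Lipschitz function---in particular every piecewise affine map---the closed nowhere-dense pieces in any meager decomposition of $A$ must contain many piecewise affine maps; covering $B$ via $B \subseteq A$ therefore yields closed sets that your approximants from Lemma~\ref{PAMdense} do \emph{not} avoid, and your nowhere-density step in $C_{\lambda}(I)$ collapses. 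To salvage the strategy you would need $B$ itself to be $F_\sigma$, so that the closed pieces sit inside $B$ and hence automatically miss all piecewise affine maps; this Borel-complexity statement is nontrivial and is not supplied by \cite{Sa32}. (Your circle paragraph also conflates the two approaches: you invoke ``the monotonicity result applied to arcs'', which belongs to the paper's argument, not to your decomposition-based one.) The paper's route via Theorem~\ref{t:1} sidesteps all of this.
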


\begin{proof}
We use the following well known result (see \cite[Theorem 7.3]{Sa37}): if for an arc $A\subset M$ and $f\colon~A\to M$ we have \textit{$D^+f(x) \ge 0$ for a.e. $x \in A$ and $D^+f(x) > - \infty$
for every $x \in A$, then $f$ is non-decreasing.}

By Theorem \ref{t:1} there is a residual set $K\subset C_{\lambda}(M)$
such that each element
of $K$ has a knot point at $\lambda$ almost every point of $M$. Fix $f
\in K$ and an arc $A\subset M$;  we have
 $D^+f(x) = +\infty \ge 0$ a.e.\ on $A$ hence $f$ can not be non-decreasing.
Applying the above result, we conclude that $D^+(x_0) = - \infty$ for
at least one point $x_0 \in A$;
in particular $f$ is not a Besicovitch function.
\end{proof}

A  {\it Morse function} $f\in C(M)$ satisfies
$$\max\{\vert D^+f(x)\vert,\vert D_+f(x)\vert\}=\max\{\vert
D^-f(x)\vert,\vert D_-f(x)\vert\}=\infty,~x\in M;
$$
in the interval case, if $x$ is an endpoint of $I$, the only max is  taken over
the derivatives from inside $I$. There exists a function $f\in C(M)$ which is Besicovitch and Morse. 
The following question remains open.

\begin{question}
Does there exist a Besicovitch-Morse function in $C_{\lambda}(M)$?
\end{question}

\subsection{Crookedness} 
We begin by defining the 
 crookedness property which is one of the central properties in Continuum Theory. 
 We explain its importance later.

\begin{defn}
	Let $f\in C(I)$,  $a,b\in I$ and let $\delta>0$. We say that $f$ is {\em $\delta$-crooked between $a$ and $b$} if for any two points $c, d \in I$ so that $f(c) = a$ and $f(d) = b$, there is a point $c'$ between $c$ and $d$ and there is a point $d'$ between $c'$ and $d$ so that $|b - f(c')| < \delta$ and $|a - f(d')| < \delta$. We will say that $f$ is {\em $\delta$-crooked} if it is $\delta$-crooked between any pair of points.
\end{defn}

In \cite[Theorem 1]{CO} and \cite{COpseudocircle} the authors proved the following generic property of maps from $C_{\lambda}(M)$, which might be the most surprising of the generic properties proven yet:

\begin{thm}\label{thm:UniLimPresLeb}
	There is  a dense $G_\delta$ set $\mathcal{T}\subset C_{\lambda}(M)$ such that if $f\in \mathcal{T}$ then
	for every $\delta>0$ there exists a positive integer $n$ so that $f^n$ is $(f,\delta)$-crooked. 
\end{thm}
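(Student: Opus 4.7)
My plan is to realize $\mathcal{T}$ as a countable intersection of open dense sets in $C_\lambda(M)$. For each $k\in\N$ and $n\ge 1$, let $\mathcal{U}_{k,n}$ denote the set of $f\in C_\lambda(M)$ for which $f^{n}$ is $(f,1/k)$-crooked with a definite quantitative margin, and set $\mathcal{U}_k := \bigcup_{n\ge 1}\mathcal{U}_{k,n}$. Then $\mathcal{T}:=\bigcap_{k\ge 1}\mathcal{U}_k$ will be the desired dense $G_\delta$. Openness of $\mathcal{U}_{k,n}$ is routine: the crookedness clause can be encoded by finitely many strict inequalities of the shape $|f^n(c')-b|<1/k$ and $|f^n(d')-a|<1/k$, with $(a,b)$ varying over a fixed $(1/4k)$-net of $M\times M$, and these inequalities persist under small $C^0$-perturbations because $f\mapsto f^n$ is continuous in the uniform metric. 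It remains to verify density of each $\mathcal{U}_k$.

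Density rests on the two main tools from the Main tools subsection. Given $f\in C_\lambda(M)$ and $\varepsilon>0$, Lemma~\ref{PAMdense} produces a piecewise affine Markov leo map $g\in C_\lambda(M)$ within $\varepsilon$ of $f$ whose absolute slopes are at least $4$. For such $g$, the maximal monotone laps of $g^n$ have mesh shrinking exponentially in $n$, and by the leo property each such lap eventually maps surjectively onto a prescribed union of Markov intervals, in fact onto $M$ once $n$ exceeds the Markov transitivity exponent. This surjectivity provides the oscillation needed to realize the zigzag demanded by the definition: starting from any preimage $c$ of $a$ under $g^n$, the successive monotone laps of $g^n$ on the way toward any other preimage $d$ of $b$ realize values first arbitrarily close to $b$ and then to $a$, yielding the required $c'$ and $d'$. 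For the delicate case when $c$ and $d$ lie in the same monotone lap of $g^n$, one either increases $n$ further so that the value spread across any single lap is less than $1/k$ (so the crookedness condition is automatic there), or applies a localized window perturbation of the type depicted in Figure~\ref{fig:perturb} to insert an extra kink at arbitrarily small uniform cost while preserving Lebesgue measure.

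The main obstacle is obtaining this density \emph{uniformly} in both the value pair $(a,b)$ and the choice of preimages $(c,d)$. Uniformity in $(a,b)$ is absorbed by the compactness of $M$ and the openness of the defining inequalities, reducing matters to a finite net. Uniformity in $(c,d)$, however, forces a quantitative coupling between the Markov refinement level of $g$, the iterate $n$, and the target $1/k$: one must choose $g$ and $n$ so that either enough monotone laps of $g^n$ fit between any two preimages of the same pair of values to house the zigzag, or the value oscillation across any single lap is itself below $1/k$. Balancing this combinatorial complexity against the iterate length, followed by a window-perturbation cleanup to secure the strict inequalities defining $\mathcal{U}_{k,n}$, is the technical heart of the argument carried out in \cite{CO,COpseudocircle}.
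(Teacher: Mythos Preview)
Your openness argument is fine: once one reformulates $\delta$-crookedness of $f^n$ as ``for every $c,d$ there exist $c',d'$ with $|f^n(c')-f^n(d)|<\delta$ and $|f^n(d')-f^n(c)|<\delta$,'' a margin $\delta'<\delta$ together with uniform continuity of $f\mapsto f^n$ gives stability under $C^0$-perturbation.

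The density argument, however, has a genuine gap. No piecewise affine map $g$ with finitely many laps --- and hence no map obtained from a PAM leo map by finitely many window perturbations --- has a $\delta$-crooked iterate for small $\delta$. Indeed, for any such $g$ and any $n$, the map $g^n$ is again piecewise monotone with finitely many laps; if $g$ is leo, each maximal monotone lap of $g^n$ eventually has image equal to all of $M$. Take $c,d$ to be the two endpoints of such a lap, so that $\{g^n(c),g^n(d)\}=\{0,1\}$ (in the interval case). Between $c$ and $d$ the map $g^n$ is monotone, so any $c'$ with $|g^n(c')-g^n(d)|<\delta$ is forced to lie within $O(\delta)$ of $d$ in value, and then every $d'$ between $c'$ and $d$ also has $g^n(d')$ within $\delta$ of $g^n(d)$, not of $g^n(c)$. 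Thus $g^n$ fails to be $\delta$-crooked whenever $\delta<\tfrac12$. Your proposed remedies do not help: increasing $n$ makes the laps shorter in the \emph{domain} but their \emph{images} only grow (this is precisely the leo property), so the ``value spread on a lap below $1/k$'' alternative never occurs; and a window perturbation leaves the map piecewise affine with finitely many pieces, so the same obstruction persists on the new monotone laps.

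This is why the actual proof in \cite{CO,COpseudocircle} does not go through PAM approximants at all. Density requires producing, near any $f$, a Lebesgue-preserving map whose graph already contains Minc--Transue type nested $N$-patterns at the scale $1/k$; such maps are limits of an inductive construction in which the number of oscillations is driven to infinity in a controlled way, and they are never piecewise affine. The combinatorial balancing you allude to in your last paragraph is real, but it is carried out on these crooked building blocks, not on PAM leo maps.
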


The $\delta$-crookedness condition is a topological condition that imposes strong requirements on values of the map.
Piecewise smooth maps do not verify the crookedness condition, thus Theorem~\ref{thm:UniLimPresLeb} cannot hold for any open collection of maps in $C_{\lambda}(M)$.

The pseudo-arc is a very curious object arising from Continuum Theory (see the survey of Lewis \cite{Lewis} and the introduction of \cite{BCO} for the overview of results involving the pseudo-arc), which was first discovered by Knaster over a century ago. On one hand side, its complicated structure is reflected by the fact that it is {\em hereditarily indecomposable}, i.e., there are no proper subcontinua $A,B\subset H$ such that $A\cup B=H$ for every proper subcontinuum $H$ of the pseudo-arc $P$. On the other hand, the pseudo-arc is {\em homogeneous}, i.e., for every two points $x,y\in P$ there exists a homeomorphism $h:P\to P$ such that $h(x)=y$. Homogeneity is a property possessed by the spaces with locally identical structure. Non-trivial examples of homogeneous spaces are the Cantor set, solenoids and manifolds without boundaries, for instance the circle.

Let $\{Z_i\}_{i\geq 0}$ be a collection of compact metric spaces.
For a collection of continuous maps $f_i:Z_{i+1}\to Z_i$ we define
\begin{equation*}
\underleftarrow{\lim} (Z_i,f_i)
:=
\{\hat z:=\big(z_{0},z_1,\ldots \big) \in Z_0\times Z_1,\ldots\big|  
z_i\in Z_i, z_i=f_i(z_{i+1}), \forall i \geq 0\}.
\end{equation*}
We equip inverse limit $\underleftarrow{\lim} (Z_i,f_i)$ with the subspace 
metric which is induced from the 
\emph{product metric} in $Z_0\times Z_1\times\ldots$,
where $f_i$ are called the {\em bonding maps}.

\begin{cor}\label{cor:MincTransue}
	The inverse limit with any $C_{\lambda}(I)$-generic map as a single bonding map is the pseudo-arc.
\end{cor}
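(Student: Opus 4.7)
The plan is to combine Theorem~\ref{thm:UniLimPresLeb} with the classical characterization, going back to Minc and Transue, of the pseudo-arc as an inverse limit of intervals with sufficiently crooked bonding maps. Recall that, by Bing's theorem, the pseudo-arc is the unique nondegenerate, chainable, hereditarily indecomposable continuum. Any inverse limit $\underleftarrow{\lim}(I,f)$ with a surjective bonding map $f\in C(I)$ is automatically nondegenerate and chainable (as a nested intersection of chain covers of $I$ obtained by pulling back fine open chains), so the task reduces to establishing hereditary indecomposability.

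First, I would take $\mathcal{T}\subset C_{\lambda}(I)$ to be the dense $G_\delta$ set supplied by Theorem~\ref{thm:UniLimPresLeb}; every $f\in\mathcal{T}$ has the property that for each $\delta>0$ there exists $n\in\mathbb{N}$ with $f^n$ being $\delta$-crooked. Since every $f\in C_{\lambda}(I)$ is surjective (it preserves $\lambda$), the standing hypothesis of the Minc--Transue criterion is met on all of $\mathcal{T}$, and in particular the inverse limit is always nondegenerate (the trivial maps $\mathrm{id}$ and $1-\mathrm{id}$ are excluded since they are not $\delta$-crooked for small $\delta$).

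Second, I would invoke the criterion of Minc and Transue: whenever $f\in C(I)$ is surjective and for every $\delta>0$ some iterate $f^n$ is $\delta$-crooked, the inverse limit $\underleftarrow{\lim}(I,f)$ is hereditarily indecomposable, and therefore homeomorphic to the pseudo-arc. The intuition is that if a subcontinuum of the inverse limit decomposed as the union of two proper subcontinua, projecting to a sufficiently late coordinate would produce a pair of subcontinua of $I$ whose overlap pattern is incompatible with the zig-zags forced by $\delta$-crookedness of $f^n$, once $\delta$ is chosen small relative to the diameters involved.

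Combining the two steps yields the corollary on the residual set $\mathcal{T}$. The main obstacle in writing out a full proof would lie in the Minc--Transue criterion itself; this is a classical statement from continuum theory, however, so for the purposes of this survey it can be cited as a black box, and the corollary then reduces to an immediate consequence of Theorem~\ref{thm:UniLimPresLeb}.
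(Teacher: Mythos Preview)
Your proposal is correct and follows essentially the same approach as the paper: the paper simply states that the corollary is a direct consequence of Theorem~\ref{thm:UniLimPresLeb} together with the Minc--Transue result \cite[Proposition~4]{MT} linking the crookedness condition to the pseudo-arc as an inverse limit. Your write-up merely unpacks this black-box citation a little further (surjectivity of Lebesgue measure-preserving maps, chainability, Bing's characterization), which is harmless; the only inessential wrinkle is the parenthetical about $\mathrm{id}$ and $1-\mathrm{id}$, since nondegeneracy of $\underleftarrow{\lim}(I,f)$ already follows from surjectivity alone and does not require excluding these maps.
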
 

This corollary is a direct consequence of Theorem~\ref{thm:UniLimPresLeb} and a result of Minc and Transue \cite[Proposition 4]{MT} connecting crookedness with pseudo-arc as inverse limit.

\subsection{Entropy}

The property that the topological entropy of generic maps on the Lebesgue measure-preserving maps is $\infty$ can be deduced from the methods of the article of Yano \cite{Y}. Moreover, another way to see it from general theory is to combine Theorem~\ref{cor:MincTransue} with results of \cite{ChrisM}.
The connection between \cite{Y} and $C_\lambda(I)$ was explicitly done in \cite[Proposition 26]{BT}:

\begin{thm}
The generic value of topological entropy in $C_{\lambda}(I)$ is $\infty$.
\end{thm}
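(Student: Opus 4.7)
The strategy is to derive the result from Corollary~\ref{cor:MincTransue} together with a dichotomy theorem for self-maps of the pseudo-arc. First I observe that a generic $f \in C_{\lambda}(I)$ has strictly positive topological entropy: the only elements of $C_{\lambda}(I)$ that are almost everywhere injective are $\mathrm{id}$ and $1-\mathrm{id}$, which form a nowhere dense set; every other map is noninvertible on a set of positive Lebesgue measure, so by the classical Rokhlin-type formula (cited in the introduction via \cite[Cor.\ 4.14.3]{Walters}) it has positive metric entropy, hence $h_{\mathrm{top}}(f) > 0$. This positivity is inherited by the generic map.

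Next, by Corollary~\ref{cor:MincTransue}, there is a dense $G_{\delta}$ set of maps $f \in C_{\lambda}(I)$ for which the inverse limit $\hat{I} := \underleftarrow{\lim}(I,f)$ is homeomorphic to the pseudo-arc; the associated shift homeomorphism $\hat{f}:\hat{I}\to\hat{I}$ is therefore a continuous self-map of the pseudo-arc. By the standard identity relating the entropy of a bonding map to that of the shift on its inverse limit (Bowen), $h_{\mathrm{top}}(\hat{f}) = h_{\mathrm{top}}(f)$, so $\hat{f}$ has positive topological entropy.

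The decisive step is then to invoke the theorem of Mouron \cite{ChrisM}: every continuous self-map of the pseudo-arc has topological entropy either $0$ or $\infty$. Applied to $\hat{f}$, positivity of the entropy forces $h_{\mathrm{top}}(\hat{f}) = \infty$, and hence $h_{\mathrm{top}}(f) = \infty$ for every $f$ in the intersection of the residual set provided by Corollary~\ref{cor:MincTransue} with the residual set of maps having positive entropy. This intersection is itself residual, which gives the desired genericity.

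The only nontrivial ingredient in this plan is Mouron's dichotomy, which rests on delicate properties of hereditarily indecomposable continua and is cited as a black box; the remaining steps are bookkeeping. An alternative route, closer in spirit to Yano's original argument \cite{Y}, would instead combine the lower semicontinuity of $h_{\mathrm{top}}$ in the uniform topology (Misiurewicz--Szlenk) with large-$m$-fold window perturbations of the piecewise affine Markov leo maps furnished by Lemma~\ref{PAMdense} to show that $\{f \in C_{\lambda}(I) : h_{\mathrm{top}}(f) > N\}$ is open and dense for every $N \in \mathbb{N}$, and then intersect over $N$. The main obstacle in this second approach is verifying quantitatively that an $m$-fold window perturbation produces a horseshoe of combinatorial size growing with $m$; this is routine given the leo property but requires some care tracking iterates.
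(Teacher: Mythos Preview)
Your proof is correct and matches one of the two routes the paper itself indicates: the survey explicitly remarks, just before stating the theorem, that the result follows by combining Corollary~\ref{cor:MincTransue} with Mouron \cite{ChrisM}, which is exactly your primary argument, while the paper's preferred citation (\cite[Proposition~26]{BT}, adapting Yano \cite{Y}) is the alternative you sketch at the end. One small tightening: Mouron's dichotomy in \cite{ChrisM} is proved for shift maps of the pseudo-arc (equivalently, for bonding maps $f\colon I\to I$ whose inverse limit is the pseudo-arc), not for arbitrary continuous self-maps; since your $\hat f$ is precisely such a shift homeomorphism the application is valid, but your statement of the cited theorem should be narrowed accordingly.
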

  
 The proof from \cite{BT} easily extends to $\mathbb{S}^1$, by replacing the fixed point in the  proof with a periodic point.  
 The generic value of topological entropy for the volume preserving continuous maps seems not to have been studied for other manifolds. On the other hand Guihéneuf \cite[Théorème 3.17]{PAG1} proved the analogous  result holds for generic homeomorphisms of 
 compact, connected manifold of dimension at least 2 preserving a ``good''
measure in the sense of Oxtoby and Ulam \cite{PAG1}.
The analogous result in the setting of homeomorphisms on manifolds of dimension greater than $1$ was proven by Yano \cite{Y}. Recently, Yano's result was strengthened to show that the generic homeomorphism or continuous map in most settings has an ergodic measure of infinite entropy \cite{CT}. It would be interesting to know if this result holds also in $C_{\lambda}(M)$.

On the other hand the question about generic value of metric entropy is unclear.
Let $PAM_\lambda(M)$ denote the set of piecewise affine Markov maps that preserve Lebesgue measure. 
In \cite[Proposition 24]{BT} the following theorem is proven for $I$, the proof is analogous in the circle case:

\begin{thm}
For every $c\in (0, \infty)$ the set $PAM_{\lambda}(M)_{entr=c}$ is dense in $C_{\lambda}(M)$.
\end{thm}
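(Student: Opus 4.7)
Given $f \in C_\lambda(M)$, $\varepsilon > 0$, and $c \in (0, \infty)$, my plan is to construct $g \in PAM_\lambda(M)$ with $d(f, g) < \varepsilon$ and $h(g) = c$ by combining the density of $PAM_\lambda(M)$ in $C_\lambda(M)$ (Lemma~\ref{PAMdense}) with a localized entropy adjustment via window perturbations. First, apply Lemma~\ref{PAMdense} to obtain a PAM Lebesgue-preserving $g_0$ with $d(f, g_0) < \varepsilon/2$; writing its linearity intervals as $I_i$ of lengths $\ell_i$ and slopes $s_i$, the Rokhlin entropy formula gives $h(g_0) = \int_M \log|g_0'|\, d\lambda = \sum_i \ell_i \log|s_i|$.

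Next, pick a Markov interval $J = [a, b]$ of $g_0$ on which $g_0$ is affine of slope $s_0$, with $|b - a|$ small enough that $(b-a)|s_0| < \varepsilon/2$. For an odd integer $m \ge 3$ and a parameter $t \in [0, b-a]$ chosen so that the new tooth-endpoint images are consistent with a refinement of $g_0$'s Markov partition, let $g_{m, t}$ agree with $g_0$ outside $[a, a+t]$ and equal the $m$-fold window perturbation of $g_0|_{[a, a+t]}$ on $[a, a+t]$. Then $g_{m, t} \in PAM_\lambda(M)$, and a direct computation using the Rokhlin formula yields $h(g_{m, t}) = h(g_0) + t \log m$ together with $d(g_{m, t}, g_0) \le t|s_0|$. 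Hence, provided $c \ge h(g_0)$, fix $m$ so that $(b-a) \log m \ge c - h(g_0)$, set $t = (c - h(g_0))/\log m$, and if needed adjust by a small secondary slope perturbation (within the Lebesgue-preserving PAM parameter manifold) to land on an admissible Markov value of $t$; the resulting $g$ satisfies $d(f, g) < \varepsilon$ and $h(g) = c$.

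For $c < h(g_0)$, window perturbations alone do not suffice since they can only increase metric entropy. My resolution is to revisit the proof of Lemma~\ref{PAMdense} and select $g_0$ with smaller entropy: by relaxing the slope bound $|s_i| \ge 4$ from the leo-producing construction and allowing slope magnitudes close to $1$, one obtains PAM Lebesgue-preserving approximations of arbitrarily small positive entropy, at the cost of sacrificing the leo property (which is not required for this theorem). Once $h(g_0) < c$, the window-perturbation step above applies. \textbf{The main obstacle} is precisely this construction of small-entropy PAM approximations of $f$: the Lebesgue-preservation constraint $\sum_{x \in g^{-1}(y)} |g'(x)|^{-1} = 1$ at each image partition point ties slope magnitudes together, so many pieces of slope close to $1$ must be carefully balanced against a few larger-slope pieces within a finely chosen Markov partition in order to simultaneously satisfy this constraint and approximate $f$ uniformly.
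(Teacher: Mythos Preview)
Your argument for $c\ge h(g_0)$ is sound: the Rokhlin formula $h_\lambda(g)=\int\log|g'|\,d\lambda$ applies to piecewise affine maps in $C_\lambda(M)$, and an $m$-fold window perturbation on a subinterval of length $t$ raises this integral by exactly $t\log m$ while moving the map by at most $t|s_0|$ uniformly. Making the perturbed map Markov again is a minor issue (choose $a+t$ among the preimages of partition points, or pass to the common refinement).

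The genuine gap is the case $c<h(g_0)$, and you have not closed it. The approximations coming from Lemma~\ref{PAMdense} have all slopes of absolute value at least $4$, hence metric entropy at least $\log 4$, so the small-$c$ range is not vacuous. Your proposed remedy---revisit the construction and allow slopes near $1$---is only a hope, not an argument: you explicitly call it ``the main obstacle'' and leave it unresolved. In fact no \emph{local} modification of $g_0$ can help. Replacing a single affine branch of slope $s_0$ on an interval of length $\ell$ by any collection of full sub-branches with slopes $s_1,\dots,s_k$ compatible with Lebesgue preservation forces $\sum_i s_0/s_i=1$; setting $p_i=s_0/s_i$ one computes the new contribution as $\ell\log s_0+\ell H(p_1,\dots,p_k)\ge \ell\log s_0$, so such perturbations can only \emph{increase} the entropy. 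Partial (non-full) branches staying inside $g_0(J)$ violate the pointwise identity $\sum_{x\in g^{-1}(y)}|g'(x)|^{-1}=1$ unless one also modifies $g_0$ outside $J$. Hence a low-entropy $PAM_\lambda$ approximation of an arbitrary $f$ must be built globally from the start, not obtained by tweaking a high-entropy one.

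The present paper is a survey and does not reproduce the proof, pointing instead to \cite[Proposition~24]{BT}. That argument supplies exactly the missing global step: one approximates $f$ directly by a piecewise affine Lebesgue-preserving map assembled from rescaled copies of a fixed model map of prescribed small entropy, and only then uses window perturbations to push the entropy up to the target value $c$. Until you carry out that construction (or an equivalent one), your proof is incomplete for every $c$ below $\log 4$.
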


Furthermore, the following theorem is proven in \cite[Proposition 25]{BT} for $C_{\lambda}(I)$ and can be analogously done with the help of Lemma \ref{PAMdense} for the circle case:

\begin{thm}
The set of maps from $C_{\lambda}(M)$ having metric entropy $\infty$ is dense in $C_{\lambda}(M)$.
\end{thm}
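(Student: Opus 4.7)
The plan is to follow \cite[Proposition 25]{BT} for the interval case almost verbatim, with Lemma~\ref{PAMdense} providing the needed density of piecewise affine Markov (PAM) leo maps uniformly for $M = I$ and $M = \mathbb{S}^1$. Given $f \in C_\lambda(M)$ and $\varepsilon > 0$, I would first use Lemma~\ref{PAMdense} to obtain a PAM leo map $g_0 \in C_\lambda(M)$ with $\|f - g_0\|_\infty < \varepsilon/2$ whose slopes all have absolute value at least $4$. A tiny additional PAM-preserving adjustment yields a Markov piece $K$ of $g_0$, affine with slope $\pm s_0$, for which $s_0\,\lambda(K) < \varepsilon/2$. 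Inside $K$ I would pick pairwise disjoint closed subarcs $\{J_n\}_{n \geq 1}$ with disjoint interiors, accumulating at a single endpoint of $K$, whose endpoints are inverse iterates of Markov vertices of $g_0$ (so the refinement stays Markov), with $\ell_n := \lambda(J_n) \searrow 0$. Then I would choose odd integers $m_n \geq 3$ growing fast enough that $\sum_n \ell_n \log m_n = +\infty$ while still $\sup_n s_0 \ell_n < \varepsilon/2$; for instance $\ell_n \asymp n^{-2}$ with $m_n$ the smallest odd integer exceeding $e^{n^3}$ gives $\ell_n \log m_n \asymp n$.

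Next I would define $g$ to equal $g_0$ off $\bigcup_n J_n$ and to be an $m_n$-fold window perturbation of $g_0$ on each $J_n$ in the sense of the definition preceding Figure~\ref{fig:perturb}. Each window perturbation is Lebesgue-preserving and matches $g_0$ at the endpoints of the corresponding $J_n$, so $g$ is continuous on $M$ away from the accumulation endpoint $p$ of $\bigcup_n J_n$; continuity at $p$ follows from $\diam(g(J_n)) = s_0 \ell_n \to 0$. Hence $g \in C_\lambda(M)$, and $\|g - g_0\|_\infty \leq \sup_n s_0 \ell_n < \varepsilon/2$ gives $\|f - g\|_\infty < \varepsilon$. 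By construction $g$ is a uniformly expanding ($|g'| \geq 4$) piecewise affine Markov map with countably many monotone branches and $|g'| = m_n s_0$ on $J_n$.

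The main obstacle is then to verify $h_\lambda(g) = +\infty$. For uniformly expanding Markov maps preserving Lebesgue, the Rokhlin entropy formula extends to the countable-branch case and yields
\begin{equation*}
h_\lambda(g) \;=\; \int_M \log|g'|\,d\lambda \;=\; h_\lambda(g_0) + \sum_{n\geq 1} \ell_n \log m_n \;=\; +\infty.
\end{equation*}
If one prefers a more hands-on argument, the same lower bound can be extracted by bounding $h_\lambda(g)$ below by the entropy contribution of any finite Markov sub-partition resolving the $m_n$-branches on $J_1, \ldots, J_N$, which equals $\sum_{n=1}^N \ell_n \log m_n$ and diverges as $N \to \infty$. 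This entropy step is the technical core and follows the corresponding interval computation of \cite[Proposition 25]{BT}.
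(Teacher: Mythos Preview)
Your proposal is correct and follows essentially the same approach as the paper, which simply cites \cite[Proposition~25]{BT} for $M=I$ and invokes Lemma~\ref{PAMdense} to transfer the argument to $M=\mathbb{S}^1$; your write-up unpacks exactly that strategy (PAM leo approximation, then countably many window perturbations with rapidly growing fold numbers, then Rokhlin's formula). One small remark: the alternative ``finite sub-partition'' entropy bound you sketch is vaguer than it needs to be, but your primary route via the Rokhlin formula $h_\lambda(g)=\int\log|g'|\,d\lambda$ for uniformly expanding countably-piecewise-affine Lebesgue-preserving maps is the right one and suffices.
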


\begin{question}
Does there exist a generic value of metric entropy for maps from $C_{\lambda}(M)$? If such value indeed exists, what is it? If there is no such value, are all non-zero values attained by the metric entropy on every generic set?
\end{question}

\section{Smoothness versus nowhere differentiability}\label{sec:smoothness}

\subsection{Connections between topological and measure-theoretic properties}

In \cite{BCOT2} an effort has been made to understand natural conditions when topological dynamical properties imply the corresponding measure-theoretic properties in $C_{\lambda}(M)$ (and vice versa). The assumptions in the next two theorems come directly from the articles of Li and Yorke \cite{LY} and Bowen \cite{Bowen} respectively. The first result was proven in \cite[Theorem 3]{BCOT2}.

\begin{thm}\label{thm:transitiveergodic}
	Let $f\in C_{\lambda}(M)$ be a piecewise $C^2$ map with a slope strictly greater than $1$.
	Then $f$ is transitive if and only if $(f,M,\lambda)$ is ergodic.
\end{thm}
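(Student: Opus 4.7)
The plan is to prove the two implications separately. The implication ``ergodic implies transitive'' is a general fact that uses only that $\lambda$ has full support on $M$. If $\{U_n\}_{n\geq 1}$ is a countable base of non-empty open sets, then for each $n$ the set $A_n := \bigcup_{k\geq 0} f^{-k}(U_n)$ satisfies $f^{-1}(A_n) \subseteq A_n$ with $\lambda(A_n \setminus f^{-1}(A_n)) = 0$ by invariance of $\lambda$, so ergodicity together with $\lambda(A_n) \geq \lambda(U_n) > 0$ forces $\lambda(A_n) = 1$. Intersecting gives a full-measure set of points whose forward orbit is dense, and in particular $f$ is transitive.

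For the converse, the key input is the classical structure theorem for piecewise $C^{2}$ expanding maps due to Li and Yorke \cite{LY} (and subsequent refinements): any absolutely continuous invariant probability measure decomposes into finitely many ergodic components $\mu_{1},\dots,\mu_{N}$, each absolutely continuous with respect to $\lambda$, whose supports $S_{1},\dots,S_{N}$ are essentially disjoint, each being a finite union of closed arcs with non-empty interior on which the density $d\mu_i/d\lambda$ is bounded above and below by positive constants. Since $\lambda$ is invariant, we may write $\lambda = \sum_{i=1}^{N} c_{i}\mu_{i}$ with $c_{i}>0$, so that $\mu_{i}$ and $\lambda|_{S_{i}}$ are mutually absolutely continuous.

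Next I would upgrade the mod-zero invariance of the $S_{i}$ to genuine topological invariance. From $\mu_{i}(f^{-1}(S_{i})) = \mu_{i}(S_{i}) = 1$ we get that $f(x) \in S_{i}$ for $\mu_{i}$-a.e.\ $x$; by equivalence with Lebesgue on $S_{i}$, the same holds for $\lambda$-a.e.\ $x \in S_{i}$, hence for a dense subset of $S_{i}$. Continuity of $f$ and closedness of $S_{i}$ then give $f(S_{i}) \subseteq S_{i}$. Now suppose for contradiction that $N \geq 2$ or $S_{1} \neq M$. Then some $S_{i}$ is a proper closed $f$-forward-invariant subset of $M$ with non-empty interior. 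The complement $M \setminus S_{i}$ is non-empty and open, and any dense orbit must visit both $\mathrm{int}(S_{i})$ and $M \setminus S_{i}$ infinitely often; but once an orbit enters $S_{i}$ it cannot leave, a contradiction with transitivity. Hence $N=1$ and $S_{1}=M$, which is exactly ergodicity of $(f,M,\lambda)$.

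The chief technical point, and the place where piecewise $C^{2}$ expansion is essential, is the appeal to the Li--Yorke structure theorem: without it one has neither finiteness of the ergodic decomposition nor the crucial fact that each support is a finite union of arcs with non-empty interior (an assertion which can fail badly for merely continuous measure-preserving maps, as witnessed by the non-ergodic examples dense in $C_{\lambda}(M)$ mentioned after Theorem~\ref{thm:3}). Once this structural input is in hand, passing from the mod-$0$ forward invariance of $S_{i}$ to genuine topological invariance via the equivalence $\mu_{i}\sim \lambda|_{S_{i}}$ and continuity of $f$ is the main remaining technicality; boundary/critical points of $f$ form a finite set and cause no trouble since $S_{i}$ has non-empty interior.
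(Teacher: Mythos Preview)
The paper, being a survey, does not include its own proof of this theorem; it attributes the result to \cite[Theorem~3]{BCOT2} and notes that the hypotheses are precisely those of Li and Yorke \cite{LY}. Your argument is correct and is exactly the approach signalled by this attribution: the easy direction (ergodic $\Rightarrow$ transitive) uses only that $\lambda$ has full support, while the converse invokes the Li--Yorke structure theorem for piecewise $C^{2}$ expanding maps to produce finitely many ergodic components supported on forward-invariant finite unions of closed arcs, which transitivity then forces to collapse to a single component with support $M$.
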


It is well known that measure-theoretic exactness implies measure-theoretic strong mixing, and since $\lambda$ is positive on open sets this furthermore implies topological mixing.  The first three points of the next result were proven in \cite[Theorem 2]{BCOT2},  which shows that these are equivalent in a smooth enough one dimensional settings. They are also an important ingredient of the proof of Theorem \ref{thm:3}.

\begin{thm}\label{thm:mixingexact}
	Let $f\in C_{\lambda}(M)$ be a piecewise $C^2$ map.
 Then the following conditions are equivalent:
 \begin{enumerate}
 \item\label{thmix:1} $f$ is topologically mixing map, 
 \item $f$ is strongly mixing,
 \item\label{thmix:3} $(f,M,\lambda)$ is measure-theoretically exact,
 \item\label{thmix:4} $f$ is leo.
 \end{enumerate}
\end{thm}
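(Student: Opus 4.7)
The implications (3) $\Rightarrow$ (2) $\Rightarrow$ (1) and (4) $\Rightarrow$ (1) are standard and are already signalled in the paragraph preceding the statement: exactness of an endomorphism implies strong mixing; strong mixing with respect to a measure of full support implies topological mixing (for open $U,V$ one has $\lambda(f^{-n}(U)\cap V)\to\lambda(U)\lambda(V)>0$); and leo trivially implies topological mixing since $f$ is surjective (preserving $\lambda$ of full support forces $f(M)=M$). The theorem's content is therefore to close the cycle via (1) $\Rightarrow$ (4) $\Rightarrow$ (3).

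For (1) $\Rightarrow$ (4), my plan is to extract uniform expansion on a suitable iterate from the hypotheses that $f$ is piecewise $C^2$ and preserves $\lambda$, and then invoke a Li--Yorke type argument. Change of variables applied branch-by-branch gives the identity
\[
\sum_{y\in f^{-1}(x)} \frac{1}{|f'(y)|}=1 \qquad\text{for a.e.\ } x\in M,
\]
so each summand lies in $(0,1]$ and in particular $|f'|\ge 1$ wherever defined. Since no homeomorphism of $I$ or $\mathbb{S}^1$ is topologically mixing, under hypothesis (1) the map $f$ has at least two preimages on a set of positive measure; combined with bounded distortion for piecewise $C^2$ maps this can be promoted to a uniform estimate $|(f^N)'|\ge \kappa>1$ on each monotonicity branch of some iterate $f^N$. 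For such a piecewise monotone, uniformly expanding $f^N$, the argument of Li and Yorke \cite{LY} converts topological mixing into leo: the images of an open interval grow by a definite factor per iteration until topological mixing forbids them from avoiding any given open set, hence $f^N$ (and therefore $f$) is leo.

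For (4) $\Rightarrow$ (3) I would invoke the Folklore/Bowen theorem \cite{Bowen}. The iterate $f^N$ is piecewise $C^2$, uniformly expanding, leo and preserves $\lambda$, so $(f^N,M,\lambda)$ is measure-theoretically exact; exactness of $f^N$ immediately transfers to $f$. Concretely, for a tail set $A=f^{-n}(A_n)$ one pulls back along each monotonicity branch $J$ of $f^n$; bounded distortion yields $\lambda(A\cap J)/\lambda(J)\approx \lambda(A_n)$ uniformly in $J$, and as $n\to\infty$ the branches shrink (by expansion) and equidistribute, forcing $\lambda(A)\in\{0,1\}$.

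The main obstacle is the expansion step in (1) $\Rightarrow$ (4): piecewise $C^2$ regularity together with $\lambda$-preservation directly give only the pointwise inequality $|f'|\ge 1$, not a bound bounded away from $1$. Upgrading this to a uniform expansion estimate on some iterate $f^N$, while ruling out pathological ``identity-like'' pieces that might survive iteration yet coexist with topological mixing, is the delicate technical step. Once this is in place, both (1) $\Rightarrow$ (4) and (4) $\Rightarrow$ (3) reduce to well-established theorems for piecewise expanding one-dimensional maps.
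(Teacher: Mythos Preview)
Your reductions (3) $\Rightarrow$ (2) $\Rightarrow$ (1) and (4) $\Rightarrow$ (1) are fine, and the paper treats them the same way (the equivalence of (1)--(3) is simply cited from \cite{BCOT2}). The substance is (1) $\Rightarrow$ (4), and here your approach both diverges from the paper's and carries the gap you yourself flag.

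You try to manufacture uniform expansion on some iterate $f^N$ from the pointwise bound $|f'|\ge 1$ that follows from $\sum_{y\in f^{-1}(x)} 1/|f'(y)|=1$. But the promotion from $|f'|\ge 1$ to $|(f^N)'|\ge\kappa>1$ is never supplied: bounded distortion does not by itself amplify a non-strict inequality, and you give no mechanism, under the sole hypothesis of topological mixing, to exclude branches on which $|f'|$ is identically $1$ (or accumulates to $1$). Since both your Li--Yorke step for (1) $\Rightarrow$ (4) and your Bowen/folklore step for (4) $\Rightarrow$ (3) require uniform expansion, the entire non-trivial part of your argument rests on this unestablished claim.

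The paper's route for (1) $\Rightarrow$ (4) bypasses expansion altogether. It observes that a piecewise $C^2$ map of $M$ has only finitely many turning points (they must lie at endpoints of the $C^2$ pieces), and then invokes \cite{HKO}: a topologically mixing interval or circle map that fails to be leo necessarily has infinitely many turning points. This is a two-line topological argument --- no derivative bounds, no distortion estimates, no iteration --- and it is immune to the obstacle you identify. Note that the $\lambda$-preservation hypothesis is not even used in this step; it enters only through the equivalence of (1)--(3) handled in \cite{BCOT2}.
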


To see that \eqref{thmix:4} is equivalent to \eqref{thmix:1}--\eqref{thmix:3} note that if $f\in C_{\lambda}(M)$ and is piecewise $C^2$ then it can have only a finite number of turning points (which in fact, must be endpoints of pieces on which the map is $C^2$). 
But by \cite{HKO}, a mixing interval map $f$ which is not leo has infinitely many turning points.

Theorems~\ref{thm:transitiveergodic} and \ref{thm:mixingexact} are in strong contrast to Theorem~\ref{t-PP}, which 
displays a difference between piecewise smooth and non-differentiable settings.

\subsection{Nowhere differentiability, knot points and topological entropy}\label{subsec:nowherediff}

In \cite{BS1} Bobok and Soukenka studied continuous piecewise affine interval maps with countably many pieces of monotonicity that preserve the Lebesgue measure. 
By taking limits of such maps they
proved the following theorem:

\begin{thm}\label{thm:BobokSoukenka} There exists a map $g\in C_{\lambda}(I)$ such that:
\begin{enumerate}
\item $g$ is nowhere monotone,
\item knot points of g are dense in $I$ and for a dense $G_{\delta}$ set $Z$ of $z$’s, the
set $g^{-1}({z})$ is infinite; 
\item topological entropy $h_{\mathrm{top}}(g)\leq \mathrm{log}(2) + \eps$.
\end{enumerate}
\end{thm}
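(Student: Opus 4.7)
The plan is to construct $g$ as a uniform limit of piecewise affine Markov maps $g_n \in C_\lambda(I)$, each obtained from the previous by a carefully localized window perturbation (in the sense defined before Figure~\ref{fig:perturb}) of a base map with entropy $\log 2$. Start with $g_0 = T$, the standard tent map $T(x) = 1 - |2x-1|$, which is in $C_\lambda(I)$ and satisfies $h_{\mathrm{top}}(T) = \log 2$. Enumerate a countable basis $\{U_k\}_{k \in \N}$ of nonempty open subintervals of $I$, together with a dense sequence $\{z_k\} \subset I$ of target values. At step $n$, I would perform an $m_n$-fold window perturbation (with $m_n \geq 3$ odd) of $g_{n-1}$ on a small arc $J_n \subset U_n$, chosen so that: (i) $J_n$ lies in a single monotone lap of $g_{n-1}$; (ii) the image $g_{n-1}(J_n)$ contains $z_n$, so the perturbation produces $m_n$ new preimages of $z_n$; (iii) $|J_n|$ is so small that $d_\infty(g_n, g_{n-1}) < 2^{-n}$. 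Since each affinely scaled copy in the window perturbation preserves the scaled Lebesgue measure, each $g_n \in C_\lambda(I)$, and by completeness of $C_\lambda(I)$ the limit $g := \lim g_n$ exists in $C_\lambda(I)$.

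Verification of property (1) is immediate from density of the $U_k$: any open $U \subset I$ contains some $U_k \supset J_k$, inside of which $g$ inherits the cascade of perturbations performed at stages $\geq k$, so $g$ cannot be monotone on any subinterval. For property (2), note that the $n$-th perturbation multiplies the slopes of $g_{n-1}$ inside $J_n$ by the factor $m_n$, and iterating nested perturbations forces unbounded one-sided slopes accumulating at endpoints of the nested windows. By choosing $J_{n+1} \subset J_n$ (or placing nested $J$'s on both sides of a target point) one obtains $D^+ g(x) = D^- g(x) = +\infty$ and $D_+ g(x) = D_- g(x) = -\infty$ on a dense set. Finally, the set $\{z \in I : |g^{-1}(z)| = \infty\}$ is the intersection $\bigcap_N \{z : |g^{-1}(z)| \geq N\}$; each of these sets is open (graph transversality of $g_n$ is inherited under small uniform perturbation within the relevant scale) and is dense by our targeted choice of $z_n$'s in step (ii), yielding the desired dense $G_\delta$.

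The main obstacle is the entropy bound in (3). The idea is to bound $h_{\mathrm{top}}(g_n)$ uniformly in $n$ via the lap-growth formula $h_{\mathrm{top}}(f) = \lim_{k\to\infty} \frac{1}{k}\log \ell(f^k)$. At step $n$ the $k$-th iterate $g_n^k$ has additional laps compared to $T^k$ only where the orbit visits a window $J_j$, $j \leq n$; since $J_j$ has small measure and $g_n$ is expanding, the number of $k$-orbit segments entering $J_j$ is bounded by a quantity of order $|J_j| \cdot 2^k$. Quantitatively, if $|J_j|$ is chosen so that $|J_j|\log m_j \leq \varepsilon/2^j$, one obtains $\frac{1}{k}\log \ell(g_n^k) \leq \log 2 + \sum_{j=1}^n |J_j|\log m_j + o_k(1) \leq \log 2 + \varepsilon$. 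The delicate step is to conclude $h_{\mathrm{top}}(g) \leq \log 2 + \varepsilon$ for the limit $g$, since topological entropy is not continuous under uniform convergence in general. One handles this directly: for fixed $k$, the lap number $\ell(g^k)$ equals $\ell(g_n^k)$ provided that $g$ agrees with $g_n$ on the finitely many maximal monotone intervals of $g_n^k$ — and by construction $g = g_n$ outside $\bigcup_{j>n} J_j$, which can be arranged to be disjoint from the turning points of $g_n^k$ for the given $k$. Letting $n \to \infty$ appropriately with $k$ yields $\frac{1}{k}\log \ell(g^k) \leq \log 2 + \varepsilon + o_k(1)$, so $h_{\mathrm{top}}(g) \leq \log 2 + \varepsilon$.
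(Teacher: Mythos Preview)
The paper does not prove this theorem; it merely attributes it to Bobok--Soukenka \cite{BS1}, whose construction, as described, proceeds via piecewise affine maps with \emph{countably} many laps from the outset and then passes to a limit. Your scheme via successive finite window perturbations of the tent map is a different route, and its success hinges entirely on the entropy bound in~(3), which is where your argument breaks.

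The displayed inequality
\[
\frac{1}{k}\log \ell(g_n^k)\ \le\ \log 2 \;+\; \sum_{j=1}^{n}|J_j|\log m_j \;+\; o_k(1)
\]
is, after letting $k\to\infty$, exactly the statement $h_{\mathrm{top}}(g_n)\le \int\log|g_n'|\,d\lambda = h_\lambda(g_n)$. This is the Ruelle/Rokhlin inequality in the wrong direction: one always has $h_{\mathrm{top}}(g_n)\ge h_\lambda(g_n)$, with equality only when $\lambda$ is the measure of maximal entropy, i.e.\ when $g_n$ has constant slope --- which your window-perturbed maps do not. A concrete counterexample: take $g_0=T$, $m_1=3$, $J_1=[1/4,1/2]$. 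Then $g_1$ has four laps with images $[0,1],[1/2,1],[1/2,1],[0,1]$; tracking laps of type ``full'' ($a_k$) and ``half'' ($b_k$) gives the recursion $a_{k+1}=2a_k+b_k$, $b_{k+1}=2a_k$, whose growth rate is the spectral radius $1+\sqrt3$. Hence $h_{\mathrm{top}}(g_1)=\log(1+\sqrt3)\approx 1.005$, whereas your bound gives $\log 2+\tfrac14\log 3\approx 0.968$. The heuristic ``the number of $k$-orbit segments entering $J_j$ is of order $|J_j|\cdot 2^k$'' counts visits weighted by Lebesgue measure, not by the MME, and the MME typically puts disproportionately more mass on the high-slope window. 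It may well be true that $h_{\mathrm{top}}(g_1^{(\delta)})\to\log 2$ as $\delta\to 0$ (via continuity of entropy in the kneading data for piecewise monotone maps), so your construction is perhaps salvageable; but you would need a genuinely different upper bound on $h_{\mathrm{top}}$---one that controls the MME-mass of the windows, or that tracks lap growth through the non-Markov itineraries of the small-image laps---and then a uniform version of that bound to pass to the limit~$g$. Your placement of the $J_n$ also has conflicting constraints (you ask simultaneously for $J_n\subset U_n$, for nesting $J_{n+1}\subset J_n$, and for $J_{n+1}$ to avoid the turning points of $g_n^k$), which would need to be reconciled.
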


Furthermore, the following two (yet unanswered) questions arose from their study:

\begin{question} Does every continuous nowhere differentiable interval map from $C_{\lambda}(I)$ have infinite topological entropy?
\end{question}

The next question relates topological entropy with knot points (see Subsection~\ref{subsec:knotpoint}).

\begin{question} Does every map from $C_{\lambda}(I)$ with a knot point $\lambda$-a.e.\ have infinite topological entropy?
\end{question}

Bobok and Soukenka continued their study in \cite{BS2} where they studied a special conjugacy class $\mathcal{F}$ of continuous piecewise monotone interval maps with countably many laps (including Lebesgue measure-preserving maps), which are locally eventually onto and all have topological entropy $\log(9)$.
They show that
there exist maps from $\mathcal{F}$ with knot points  in its fixed point $1/2$.

\section{Other topologies}\label{sec:other}

The aspects we discuss above are also interesting with other topologies on the spaces of Lebesgue measure-preserving maps on one-dimensional compact manifolds as well as higher dimensional analogues.

de Faria et.\ al.\ showed that topological entropy is infinite for homeomorphisms in two different settings \cite{FHT1,FHT}. As above, let $\mathbb{M}$ be  compact $d$-dimensional manifold and $\mathcal{H}^1(\mathbb{M})$  be
the space of homeomorphisms which are bi-Lipschitz in all local charts.
Let $\mathcal{H}^1_\alpha$ denote the closure of $\mathcal{H}^1$ with
respect to the $\alpha$-H\"older-Whitney topology.
Their first result is that topological entropy is generically 
infinite in $\mathcal{H}^1_\alpha$ whenever $d \ge 2$ and $0 < \alpha <1$.

For $1 \le p,p^* < \infty$ let $S^{p,p^*}(\mathbb{M})$ denote the space of homeomorphisms on $\mathbb{M}$ which in all local charts are of Sobolev class $W^{1,p}$ and whose inverse is of Sobolev class $W^{1,p^*}$ together with
the $(p, p^*)$-Sobolev-Whitney topology.
Their second result is that topological entropy is generically infinite in $S^{p,p^*}(\mathbb{M})$ when $d \ge 2$ and $d - 1 < p,p^* < \infty$.

In \cite{Hazard} Hazard constructed interesting examples of noninvertible maps with infinite topological entropy in these topologies, however he did not study generic behavior.

These results are the first dynamical genericity results for intermediate smoothness.  Generic values of topological entropy in these topologies has not yet been studied in the volume preserving case. In fact no other dynamical properties have been studied and it would be interesting to understand which of the results of this survey hold in analogous topologies for continuous Lebesgue measure preserving maps as well as simply for the continuous maps.

\section*{Appendix}

Let $\textrm{Rec}(f)$ denote the set of recurrent points of a map $f\in C(M)$. It is proved in \cite{MS} that for graph maps (in particular for $\mathbb{S}^1$) $\overline{\mathrm{Rec}(f)}=\overline{\mathrm{Per}(f)}\cup \mathrm{Rec}(f)$. The following lemma confirms intuition that recurrence without dense periodicity is possible only for irrational rotations of $\mathbb{S}^1$. The following lemma is the crucial step in the proof of Remark~\ref{rem:CP} in the case of $\mathbb{S}^1$.

\begin{lemmaA}\label{lem:rec}
Let $f\in C(\mathbb{S}^1)$.
If $\overline{\mathrm{Rec}(f)}=\mathbb{S}^1$ and $\mathrm{Per}(f)\neq \emptyset$, then $\overline{\mathrm{Per}(f)}=\mathbb{S}^1$.
\end{lemmaA}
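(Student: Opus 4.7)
My plan is to argue by contradiction. Assume $\overline{\mathrm{Per}(f)}\neq \mathbb{S}^1$ and put $U:=\mathbb{S}^1\setminus \overline{\mathrm{Per}(f)}$, a non-empty open set. Combining the hypothesis $\overline{\mathrm{Rec}(f)}=\mathbb{S}^1$ with the Mai--Shi identity $\overline{\mathrm{Rec}(f)}=\overline{\mathrm{Per}(f)}\cup \mathrm{Rec}(f)$ recalled just above the lemma immediately gives $U\subseteq \mathrm{Rec}(f)$. Thus every point $x\in U$ is recurrent, is not periodic, and satisfies $x\in \omega(x)$.

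The next step is to invoke the classification of $\omega$-limit sets for continuous graph maps (Blokh's structure theorem; cf.\ \cite{MS}) applied to $\omega(x)$ for $x\in U$. The possible forms for $\omega(x)$ are: (a) a finite periodic orbit; (b) a solenoidal minimal Cantor set, i.e.\ a nested intersection of cycles of arcs of periods tending to infinity, which is automatically disjoint from $\mathrm{Per}(f)$; (c) a ``cycle of arcs'' in which periodic points are dense; or, in the circle setting specifically, (d) the whole $\mathbb{S}^1$ acted on minimally. The plan is to exclude every case except (b): case (a) would make $x$ periodic; case (c) would force $x\in \omega(x)\subseteq \overline{\mathrm{Per}(f)}$, contradicting $x\in U$; and case (d) would force $f$ to be conjugate to an irrational rotation of $\mathbb{S}^1$, contradicting $\mathrm{Per}(f)\neq\emptyset$. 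Hence for every $x\in U$ the set $\omega(x)\ni x$ is a solenoidal minimal Cantor set.

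Let $\{S_\alpha\}_{\alpha\in I}$ be the collection of distinct solenoidal minimal Cantor sets of $f$ that meet $U$; then $U\subseteq\bigcup_{\alpha\in I} S_\alpha$. I would then appeal to the further structural fact that a continuous graph map admits at most countably many solenoidal minimal sets (a standard consequence of Blokh's nested cycles-of-arcs description; see also \cite{MS}), so $I$ is countable. Since each $S_\alpha$ is a Cantor set and therefore nowhere dense in $\mathbb{S}^1$, this presents the non-empty open subset $U$ of the complete metric space $\mathbb{S}^1$ as a countable union of nowhere dense closed sets, contradicting the Baire category theorem. The contradiction proves $\overline{\mathrm{Per}(f)}=\mathbb{S}^1$.

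The main hurdle will be the careful use of the two structural inputs about continuous graph maps: the classification of $\omega$-limit sets (including the extra case (d) peculiar to $\mathbb{S}^1$) and the countability of the family of solenoidal minimal sets. Both belong to the well-known body of results of Blokh's school on graph dynamics, and they are precisely the ingredients that separate ``rotation-like'' behaviour from ``periodic-like'' behaviour on $\mathbb{S}^1$; once they are in hand, only the short case analysis above and a single Baire-category line remain.
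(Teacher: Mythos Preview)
Your argument is essentially correct and shares the paper's overall scaffold: argue by contradiction, pick a recurrent non-periodic $x\in U=\mathbb{S}^1\setminus\overline{\mathrm{Per}(f)}$, and run Blokh's classification of (maximal) $\omega$-limit sets for circle maps to reach a contradiction. The genuine divergence is in the solenoidal case. The paper kills it directly: since $x$ is recurrent, infinitely many iterates of $x$ fall in $U$, so at a deep enough level of the nested cycles of arcs one finds an entire periodic arc $f^s(J)\subset U$; but a periodic arc contains a periodic point, contradicting $U\cap\mathrm{Per}(f)=\emptyset$. You instead let the solenoidal case survive for every $x\in U$, cover $U$ by the solenoidal minimal Cantor sets, invoke the (standard but extra) fact that a graph map has at most countably many of these, and finish with Baire. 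Your route buys a clean one-line endgame at the cost of a second black-box input; the paper's route is more self-contained but needs the pigeonhole with periodic arcs.

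One point does need tightening: your case (d) is stated too narrowly. The circumferential alternative in Blokh's classification is not just ``$\mathbb{S}^1$ is minimal''; it is that some connected $K\supset\omega(x)$ admits a monotone semiconjugacy $\phi$ onto an irrational rotation. When $\phi$ is not injective you get a Denjoy-type minimal Cantor set together with wandering intervals, and this does \emph{not} fall under your (b) (it is not solenoidal) nor under your (d) (the action is not minimal on all of $\mathbb{S}^1$). You should handle it, as the paper does, by observing that wandering intervals contradict $\overline{\mathrm{Rec}(f)}=\mathbb{S}^1$, while injectivity of $\phi$ forces $K=\mathbb{S}^1$ and $f$ conjugate to an irrational rotation, contradicting $\mathrm{Per}(f)\neq\emptyset$. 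With that amendment your proof goes through.
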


\begin{proof}
Assume towards contradiction that there is a nonempty open set $U:=\mathbb{S}^1\setminus \overline{\mathrm{Per}(f)}$.
Take a point $x\in U\cap \mathrm{Rec}(f)$ and its maximal omega limit set $W\supset\omega(x)$. Then by \cite{BlokhBM1} (cf. \cite{Ruette}) $W$ is one of the following four  types: a periodic orbit, a basic set, a solenoidal set or 
 a circumferential set. We claim that none of these can occur, and so we have a contradiction, thus $\overline{\mathrm{Per}(f)}=\mathbb{S}^1$. 

 Recall that $x\in \omega(x)$, so $W\cap U\neq \emptyset$, thus $W$ can not be a periodic orbit. 
 Since orbit of $x$ is infinite, there are nonnegative integers $k<n<m$ such that $f^k(x), f^n(x), f^m(x)\in U$.
 If $W$ is a solenoidal set, then there is a periodic interval $J$ such that each of these three points belongs to a different iterate of $J$. In particular, there is a non-negative integer $s$ such that $f^s(J)\subset U$. But since $J$ is a periodic interval, $f^s(J)\cap \per(f)\neq \emptyset$ which is again impossible. If $W$ is a circumferential set, then there is a a connected set $K\supset W$ and a monotone factor map $\phi \colon K\to \mathbb{S}^1$ semi-conjugating $f|_K$ with an irrational rotation and such that each non-singleton fiber of $\phi$ is a wandering interval. In particular, if $\phi$ is not one-to-one then $\mathbb{S}^1\neq \overline{\textrm{Rec}(f)}$ which is a contradiction. But if $\phi$ is one-to-one then $K$ is homeomorphic to $\mathbb{S}^1$ which is again impossible, since $W=K$ is a subset of $\mathbb{S}^1\setminus \per(f)\neq \mathbb{S}^1$ but interval is not homeomorphic to a circle. The last case is that $W$ is a basic set. By repeating the argument from the circumferential set case, we can find a map $\phi$ which conjugates $f^n|_W$ with a mixing interval map, for some $n$. But a consequence of this conjugacy is that periodic points are dense in $W$, in particular there is a periodic point arbitrarily close in $x$, so also in $U$, which is again a contradiction. 
 \end{proof}

 \section*{Acknowledgements} The authors thank the European Regional Development Fund, project No.~CZ 02.1.01/0.0/0.0/16\_019/0000778 for financing their one week research meeting in Prague where this survey article was finalized.

\end{document}